\newcommand{\N}{\ensuremath{\mathbb{N}}}
\newcommand{\R}{\ensuremath{\mathbb{R}}}
\newcommand{\E}{\ensuremath{\mathbb{E}}}
\renewcommand{\P}{\ensuremath{\mathbb{P}}}
\newcommand{\ind}[1]{\ensuremath{\mathbbm{1}_{\left\{#1\right\}}}}
\newcommand{\diff}{\mathop{}\mathopen{}\mathrm{d}}
\newcommand{\cal}[1]{\ensuremath{\mathcal{#1}}}
\newcommand\croc[1]{\left\langle #1\right\rangle}
\newcommand\steq[1]{\stackrel{\text{\rm #1.}}{=}}
\def\eps{\varepsilon}
\def\cadlag{c\`adl\`ag }
\newtheorem{proposition}{Proposition}
\newtheorem{definition}[proposition]{Definition}
\newtheorem{lemma}[proposition]{Lemma}
\newtheorem{theorem}[proposition]{Theorem}
\newtheorem{corollary}[proposition]{Corollary}
\title{A Stochastic Analysis of Particle Systems with Pairing}
\date{\today}
\author[V. Fromion]{Vincent Fromion}
\email{Vincent.Fromion@inrae.fr}
\address[V.~Fromion]{INRAE, MaIAGE, Universit\'e Paris-Saclay, Domaine de Vilvert, 78350 Jouy-en-Josas, France}
\author[Ph. Robert]{Philippe Robert}
\email{Philippe.Robert@inria.fr}
\urladdr{http://www-rocq.inria.fr/who/Philippe.Robert}
\author[J. Zaherddine]{Jana Zaherddine}
\email{Jana.Zaherddine@inria.fr}
\address[Ph.~Robert, J. Zaherddine]{INRIA Paris, 2 rue Simone Iff, 75589 Paris Cedex 12, France}
\begin{document}

\begin{abstract}
Motivated by a general principle governing numerous regulation mechanisms in biological cells,  we investigate a general interaction scheme between  different populations of particles and specific particles, referred to as agents. Assuming that each particle follows a random path in the medium, when a particle and an agent  meet, they may bind and  form a pair which has some specific functional properties. Such a  pair is also subject to random events and it splits after some random amount of time. In a stochastic context, using a Markovian model for the vector of the number of paired particles, and by taking the total number of particles as a scaling parameter, we study the asymptotic behavior of the time evolution of the number of paired particles. Two scenarios are investigated:  one with a large but fixed number of agents, and the other one, the dynamic case, when agents are created at a bounded rate and  may die after some time when they are not paired.  A first order limit theorem is established for the time evolution of the system in both cases.  The proof of an averaging principle of the dynamic case is one of the main contributions of the paper.  Limit theorems for fluctuations are obtained in the case of a fixed number agents.  The impact of dynamical arrivals of agents  on the level of pairing of the system is discussed. 
\end{abstract}

\maketitle

 \vspace{-5mm}

\bigskip

\hrule

\vspace{-3mm}

\tableofcontents

\vspace{-1cm}

\hrule

\bigskip

\section{Introduction}
In this paper we investigate a general mechanism of interaction between  different populations of particles and specific particles, agents,  in some environment. Assuming that each of the particles follows a random path in the medium, when a particle and an agent  meet, they may form a pair which has a specific functional property in the medium. Such a  pair is also subject to random events, it splits after some random amount of time. The efficiency of the pairing mechanism is analyzed with the time evolution of the number of paired particles of each type. 

\subsection{Motivation}
The initial motivation comes from molecular biology where this is an almost ubiquitous phenomenon occurring  in biological cells.  It can be (roughly) described as follows:   different types of macro-molecules (ribosomes, or polymerases for example), referred to as {\em particles}, are in charge of producing some of the functional components necessary to the development of the cell (mRNAs, proteins). Specific macro-molecules, referred to as {\em agents} in the paper, like small RNAs, have a regulation role in the cell. Agents can pair/bind with  particles to block, or to speed-up, their activity. Due to thermal noise, a pair agent-particle splits after some time. The dynamic behavior of the systems investigated are described in terms of binding/unbinding operations of  agents and particles.  See Section~\ref{BioSec} of the Appendix for a more detailed presentation of these aspects. 

\subsection{Literature}
A typical representation of pairing mechanisms in the literature, written as a chemical reaction, is of the type, 
\begin{equation}\label{MM}
{\cal Z}{+}{\cal F}_j \xrightleftharpoons{} {\cal ZF}_j\rightharpoonup{\cal G}_j{+}{\cal Z}
\end{equation}
where the chemical species are as follows: ${\cal Z}$ is  associated to what we call agents (enzymes, small RNAs, \ldots), ${\cal F}_j$ is for particles of type $j{\in}\{1,\ldots,J\}$ (RNAs, polymerases, \ldots). The species ${\cal ZF}_j$ is for pairs of ${\cal Z}$ and ${\cal F}_j$  and ${\cal G}_j$ is for a ``product'' of type $j$, it can be  ${\cal F}_j$.
In a deterministic setting the leads to a set of ODEs for a dynamical system $(X_{A}(t),A{\in} Z, F_j, ZF_j, G_j)$, for  example, for $(X_{{\scriptscriptstyle ZF_j}}(t))$ it gives
\begin{equation}\label{DetP}
\frac{\diff}{\diff t} X_{{\scriptscriptstyle ZF_j}}(t)= \kappa^+_jX_{{\scriptscriptstyle Z}}(t)X_{{\scriptscriptstyle F_j}}(t)- \kappa^-_jX_{{\scriptscriptstyle XF_j}}(t)
\end{equation}
for some constants $\kappa^\pm_j{\ge}0$. Note the quadratic term on the right hand side. Investigations are generally on the stability of these dynamical systems. See~\citet{Petrides}, \citet{Giudice} and~\citet{Jaya}. See Section~\ref{CRNSec} for a brief presentation of this formalism. 

In a stochastic context, this is represented as a Markov process  whose state descriptor is the vector of the number of copies of the different chemical species.  Simulations and numerical analysis of the associated Fokker-Planck equations have been used to study these phenomena, see~\citet{Petrides}.

The technical context is related to the celebrated Michaelis-Menten kinetics. These chemical reactions involve enzyme, substrate and product macro-molecules, whose associated chemical species are denoted respectively as ${\cal E}$, ${\cal S}$ and ${\cal P}$. The chemical reaction 
\[
  {\cal E}{+}{\cal S} \xrightleftharpoons{} {\cal ES}\rightharpoonup{\cal P}{+}{\cal E},
\]
has been investigated for some time now. The basic assumption for these models is that there are few copies of chemical species ${\cal E}$ but a large number of copies of substrate, so that the reaction rate is large (for the chemical reaction on the left). In a deterministic setting it leads to a system of non-polynomial ODEs. In a stochastic context,  these ODEs can be obtained via the proof of an averaging principle. See~\citet{Michaelis} and for a general overview~\citet{Sanft2011} and~\citet{Cornish}.  

Averaging principles also play an important role in our paper.  For the mathematical point of view, agents  may be seen as playing the role of enzymes in our model. Nevertheless our framework is not really that of  Michaelis-Menten. Their number is nevertheless not fixed in our main model of Section~\ref{OpenSecF}. As we will see, the production of agents has a strong impact on the qualitative behavior of the system. As it can be expected, the quadratic expressions due to  pairing mechanisms, like in Relation~\eqref{DetP}, are at the origin of some technical difficulties in the proof of limit theorems.

\subsection{Stochastic Model}
There are $J$ types of particles. For $1{\le}j{\le}J$, $N_j$ is the total number of particles of type $j$, this quantity is assumed to be fixed.  The total number of particles is $N{=}N_1{+}{\cdots}N_J$, it is our scaling parameter. 
A Markovian stochastic model is considered, each event occurs after an amount of time with an exponential distribution and the corresponding random variables are assumed to be independent.

There is only type of agent. An agent and a particle of type $j{\in}\{1,\ldots,J\}$ bind/pair at rate $\lambda_j$, and, in a reverse operation,  such a pair split into an agent and a particle of type $j$ at rate $\eta_j$. An agent or a particle which is not paired is said to be {\em free}.

The variables of interest are
\[
(F_N(t),Z_N(t))\steq{def} \left(\left(F_{N,j}(t), j{=}1,\ldots,J\right),Z_N(t)\right)
\]
where, for $1{\le}j{\le}J$ and $t{\ge}0$, $F_{j}(t)$ is the number of free particles of type $j$, i.e. not paired with an agent, and $(Z_N(t))$ is the process for the number of free agents. When the goal of pairing mechanism is of reducing the activity of the particles, this will be referred to as {\em sequestration} of particles, the objective is of minimizing
\[
\left(\sum_{j=1}^J \frac{F_{N,j}(t)}{N}\right)
\]
the process of the fraction of the  number of free particles. 
We analyze the asymptotic behavior, when $N$ goes to infinity, of the time evolution of  the $J$-dimensional process $(F_{j,N}(t)/N)$ associated to the free particles. An appropriate timescale for a non-trivial asymptotic evolution when $N$ goes to infinity has to be determined.

In~\citet{Fromion2}, a related model of sequestration has been analyzed, to study the regulation of transcription. It also includes  additional variables which are not considered in this paper. A component of the  stochastic model is  a related Markov process but in dimension $1$, i.e. for $J{=}1$. As it will be seen, compared to the case $J{=}1$, the multi-dimensional aspect of our model has a significant impact on the scaling properties of the associated stochastic processes.

Two types of models for agents  are analyzed.
\begin{enumerate}
\item  Agents are neither created nor removed: the number of agents is fixed,  of the order of $N$.
\item An agent is created at rate $\beta$ and, only when it is not paired with a particle, it dies at rate $\delta$.
\end{enumerate}
Case a) is used to investigate the case when the environment does not change significantly and when there is already a large number of agents to regulate the system.
Two cases are considered. In Section~\ref{SCritSeq} the total number of agents is of the order of $rN$ with $r{\in}(0,1)$, there are much more particles than agents. It is shown that
the process $(F_{j,N}(t)/N)$ is converging in distribution to the solution of an ODE. The equilibrium point of this ODE is unique and its coordinates are positive. For this system the number of free particles of type $j{\in}\{1,\ldots,J\}$ is, of course,  of the order of $N$.

In Section~\ref{CritSeq} the total number of agents is $N$ the same as the total number of particles. It is shown that with, appropriate initial conditions, the process $(F_{j,N}(t/\sqrt{N})/\sqrt{N})$ is converging in distribution to the solution of an ODE and a central limit theorem is proved, it shows that the fluctuations are of the order of $\sqrt[4]{N}$. In this case the impact of stochasticity on the pairing mechanism is minimal since there is a fraction of the order of $1/\sqrt{N}$ of free particles. 

Case b) is investigated in Section~\ref{OpenSecF} for the case when, initially, there  few agents (free or paired) are in the system,  the goal is of investigate the growth of the number of paired particles. The proof of an averaging principle in this context is challenging for several reasons. 

Since a paired agent does not die (it is not degraded), one can  expect an asymptotic situation as in Section~\ref{CritSeq} with a negligible fraction of free particles. We show that this is not the case, in fact, formally, the behavior is similar to that of Section~\ref{SCritSeq}, but on a faster time scale and with important qualitative and technical differences.

If the system starts with few agents, in this case most of $N$  particles are initially ``free'',  all agents created  will pair with a free particle right away and will keep doing that, via the successive steps of pairing/splitting,  as long the number of free particles is ``large'' so that, with high probability,  pairing occurs before degradation for agents. 
Given the rate of creation of agents, the natural timescale to study this problem is $(Nt)$. 

It can be expected that the  multi-dimensional process
\[
\left((\frac{F_N(Nt)}{N}\right){=}\left(\frac{F_{j,N}(Nt)}{N}\right)
\]
converges in distribution to a continuous process reflecting the asymptotic degree of pairing of the system. Due to their large transition rates, the integer-valued processes $(Z_N(Nt))$ and $(F_N(Nt))$ are  ``fast'' processes. Because of the space scaling,  $\left({F_N(Nt)}/{N}\right)$ is an a priori ``slow'' process.  Following the classical approach in this domain, see~\citet{Papanicolaou} in a stochastic calculus context and~\citet{Kurtz} for its formulation for jump process. For $T{>}0$, one has to consider the occupation measure associated to $(Z_N(Nt))$, i.e. this is the functional on non-negative Borelian functions on $[0,T]{\times}\N$,
\[
g\longrightarrow \int_0^T g(s,Z_N(Ns))\diff s.
\]
If this approach allows us to derive the results of Section~\ref{SCritSeq} for case a), where an averaging principle is proved, it does not work for case b). The sequence of 
processes $({F_{j,N}(Nt)}/{N},j{=}1,\ldots,J)$ does {\em not} converge in distribution in fact. It is not tight for the topology associated to uniform convergence if the initial state does not converge to some one-dimensional curve of $[0,1]^J$.  The main convergence result of this case is Theorem~\ref{TheoLLN}  of Section~\ref{OpenSecF}. 
It shows that the process associated to the total number of free particles,
\[
(\|F_N(Nt)\|)\steq{def} \left(\sum_{j=1}^J \frac{F_{N,j}(Nt)}{N}\right),
\]
converges in distribution to a continuous process. The sequence of $[0,1]^J$-valued processes $({F_{j,N}(Nt)}/{N},1{\le}j{\le}J)$ converges in distribution, but in a weak form, via its associated occupation measure.  It turns out that the process $(\|F_N(Nt)\|)$ determines, in some way, the behavior of the coordinates of $(F_N(Nt))$.  For $N$ large $(F_{N,j}(Nt))$ can in fact be represented as a curve of $[0,1]^J$ determined by $\|F_N(Nt)\|$.  In~\citet{Fromion2}, no such difficulty shows up since $J{=}1$.

Intuitively, it is shown that, in the limit, the number of free particles is of the order of $N$, as in case a) but for some specific $r{<}1$.  These results stress the impact of dynamical arrivals and  departures of agents. In particular the fraction  of paired particles  is asymptotically strictly less than $1$.  

Technical difficulties are related to the lack of tightness properties of the process $({F_{j,N}(Nt)}/{N})$. For this reason  the definition of the  occupation measure is extended to include also  ``slow''  processes and not only the fast processes as it is classical in the context of averaging principles.  As a functional on Borelian functions on $[0,T]{\times}[0,1]^J{\times}\N$, the occupation measure is expressed as  
\[
g\longrightarrow \int_0^T g\left(s,\frac{F_N(Ns)}{N},Z_N(Ns)\right)\diff s.
\]
The investigation of the limiting behavior of this sequence of occupation measures is the main topic of Section~\ref{OpenSecF}, including the identification of possible limiting points.

The reason of this behavior is essentially due to the interaction of several fast time scales. At the normal time scale $(t)$, if the components of the vector $F_N(t)$ are already of the order of $N$, the pairing/splitting events occur at a rate proportional to $N$. Since the natural time scaling for case b) is sped-up as $(Nt)$, roughly speaking, the pairing/splitting events will be instantaneously at equilibrium, at the first order, at any ``time'' $t$ for the current ``mass'' $\|F_N(Nt)\|$. In particular, if the initial point $\overline{F}_N(0)$ does not converge  to the equilibrium associated to the mass  $\|F_N(0)\|$, there cannot be a convergence in a neighborhood of $t{=}0$, this is the one-dimensional curve mentioned above.

\subsection*{Outline of the Paper}
Section~\ref{ModelSec} introduces notations and the Markovian process used to investigate pairing mechanisms. 
Section~\ref{CloseSec} analyzes the static case when the number of agents is fixed and in Section~\ref{OpenSecF} a stochastic averaging principle is proved when agents are created and degraded. To motivate  the design of such stochastic models,  Section~\ref{BioSec} of the appendix presents several examples  of regulation mechanisms in biological cells. Section~\ref{TechApp} of the appendix  is a quick reminder of classical limit results for $M/M/1$ and $M/M/\infty$ queues. These queues play an important role in the design of couplings used in the proofs of our limit theorems.

\section{Stochastic Model}\label{ModelSec}

\subsection*{Definitions and Notations}
If $H$ is a locally compact metric space, ${\cal C}_c(H)$ is the space of continuous functions with compact support endowed with the topology of uniform convergence. We denote by  ${\cal M}^+(H)$  the set of non-negative Radon measures on $H$ and ${\cal M}_1(H)$, the set of probability distributions on $H$, both spaces are endowed with the weak topology. See~\citet{Rudin}. Throughout the paper convergence in distribution of a sequence of jump processes $(U_N(t))$ to a process $(U(t))$ is understood with respect to the topology of  uniform convergence on compact sets for \cadlag functions. See Chapter~2 of~\citet{Billingsley} for example. 

For $J{\in}\N$, if $x{=}(x_i)$, $y{=}(y_i){\in}\R^J$, define
\begin{equation}\label{DefS}
\|x\| \steq{def} |x_1|{+}\cdots{+}|x_J|, \quad \croc{x,y}=x_1y_1{+}\cdots{+}x_Jy_J,
\end{equation}
and,
\begin{equation}\label{overeta}
\overline{x}{=}\max(x_j,1{\le}j{\le}J)\text{ and  }\underline{x}{=}\inf(x_j,1{\le}j{\le}J).
\end{equation}

We now introduce the main definitions for our stochastic model.  There are $J$ different types of particles.  The total number of particles of type $j{\in}\{1,\ldots,J\}$  is $C_{j,N}$, and $N{=}C_{1,N}{+}\cdots{+}C_{J,N}$, the total number of particles is a fixed number, it is also our scaling parameter.  It is assumed that, 
\begin{equation}\label{ScaleN}
\lim_{N\to+\infty} \left(\frac{C_{j,N}}{N}\right)=c{=}(c_j )
\end{equation}
holds, for some $c{\in}(0,1)^J$ such that $c_1{+}c_2{+}\cdots{+}c_J{=}1$.

\subsection*{State Space}
The state space is
\[
  {\cal S}_N{=}\left\{x=(f,z)=((f_j),z){\in}\N^{J+1}: f_j{\le}C_{j,N}, \forall 1{\le}j{\le}J\right\},
  \]
and, if $t{\ge}0$,
\begin{itemize}
\item for $1{\le}j{\le}J$, $F_{j,N}(t)$ denotes the number of free  particles of type $j$ at time $t$ and $F_N(t){=}(F_{j,N}(t),1{\le}j{\le}J)$;
\item The number of free agents  at time $t$ is  $Z_N(t)$;
\item The state of the process at time $t$ is $X_N(t){=}(F_N(t),Z_N(t)){\in}{\cal S}_N$.
\end{itemize}
The number of  agents paired with a particle of type $j$ at time $t$ is therefore $S_{j,N}(t){=}C_{j,N}{-}F_{j,N}(t)$.
In state $(F_N(t),Z_N(t))$, the total number of free particles is $\|F_N(t)\|$.

\subsection*{Transitions}
The dynamical behavior of $(X_N(t))$ is driven by several types of transitions.
\begin{enumerate}
\item A given particle of type $j$ and a given agent are paired at rate $\lambda_j$;
\item A pair (particle of type $j$, agent) is split at rate $\eta_j{>}0$ to give a particle of type $j$ and a free agent;
\item Agents are created at rate $\beta{\ge}0$ and a {\em free} agent dies, is degraded,  at rate $\delta{>}0$. An agent paired to a particle cannot die.
\end{enumerate}

The state process $(X_N(t)){=}(F_N(t),Z_N(t))$  is almost surely a {\em \cadlag function}, i.e.  a right-continuous function with left limits at any point of $(0,{+}\infty)$. It is described as an irreducible Markov process on ${\cal S}_N$ whose $Q$-matrix $Q_F$  is given by 
\[
(f,z){=}((f_j),z)\longrightarrow (f,z){+}
\begin{cases}
  ({-}e_j,{-}1) & \lambda_j f_j z, \\
  ({+}e_j,{+}1) & \eta_j(C_{j,N}{-}f_j), \\
  (0,{+}1) & \beta, \\
  (0,{-}1) & \delta z, \\
\end{cases}
\]
where $e_j$ is the $j$th unit vector of $\N^{J}$.

Note that the pairing mechanism induces quadratic transition rates in the $Q$-matrix.

\begin{definition}\label{rho}
\[
  c{=}(c_j),\, \eta{=}(\eta_j),\, \lambda{=}(\lambda_j), \quad 
  \rho_0{=}\frac{\beta}{\delta} \text{ and }
  \rho_j{=}\frac{\eta_j}{\lambda_j}, j{=}1,\ldots,J.
  \]
For $y{\in}(0,1)$,  $\phi(y)$ is defined as the unique solution of the equation
\begin{equation}\label{phi}
\sum_{j=1}^J \frac{\rho_j}{\rho_j{+}\phi(y)}c_j=y.
\end{equation}
\end{definition}

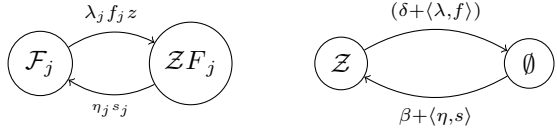
\begin{figure}[ht]
\begin{tikzpicture}[->,node distance=8mm] \node[circle,draw](F) at (0,0){${\cal F}_j$};
  \node[circle,draw](S) at (2,0){${\cal ZF}_j$};
  \node[circle,draw](Z) at (4,0){${\cal Z}$};
  \node[circle,draw](O) at (6.5,0){$\emptyset$};
  
  \path (F) edge [bend left,midway,above] node {$\scriptstyle{\lambda_j f_jz }$} (S);
  \path (S) edge [bend left,midway,below] node {$\scriptscriptstyle{\eta_j s_j}$} (F);
  \path (Z) edge [bend left,midway,above] node {$\scriptstyle{(\delta {+}\croc{\lambda,f})}$} (O);
  \path (O) edge [bend left,midway,below] node {$\scriptstyle{\beta{+}\croc{\eta,s}}$}(Z);
\end{tikzpicture}
\caption{Transitions of Pairing Mechanism, with $s{=}(s_j){\steq{def}}(C_{j,N}{-}f_{j})$.}
\end{figure}

\subsection*{Stochastic Differential Equations}
The process  $(X_N(t)){=}(F_N(t),Z_N(t))$ is represented as the solution of a system of SDEs (Stochastic Differential Equations). On the probability space there are $2(J{+}1)$ independent Poisson processes on $\R_+^2$ with intensity measure $\diff x{\otimes}\diff t$,  ${\cal P}_z^+$, ${\cal P}_z^-$, ${\cal P}_j^+$, ${\cal P}_j^-$,  $j{=}1,\ldots,J\}$. See~\citet{Rogers2}  for example.
The underlying filtration $({\cal F}_t)$ is defined by, for $t{>}0$,
\[
{\cal F}_t=\sigma\croc{{\cal P}_{j/z}^\pm([a,b]{\times}[0,s], j{=}1,\ldots,J\}, a\leq b, s{\le}t}.
\]
In the following,  measurability properties are assumed to be with respect to this filtration. 

Let $(F_N(t),Z_N(t))$ be the solution of the SDE, for $j{=}1,\ldots,J$, 
\begin{align}
\diff F_{j,N}(t) &= {\cal P}_{j}^+((0,\eta_j (C_{j,N}{-}F_{j,N}(t{-}))),\diff t){-}{\cal P}_{j}^-((0,\lambda_j F_{j,N}(t{-})Z_N(t{-})),\diff t),\label{SDEd1}\\
\diff Z_N(t) &= {\cal P}_{z}^+((0,\beta),\diff t){-}{\cal P}_z^{-}((0,\delta Z_N(t{-})),\diff t)\label{SDEd2}{+}\sum_{j=1}^J \diff F_{j,N}(t),
\end{align}
where $U(t{-})$ denotes the left-limit of the \cadlag process $(U(s))$ at $t{>}0$ and  with the usual notation,  if $A{\ge}0$ and  ${\cal P}$ is a Poisson point process on $\R_+^2$,
\begin{equation}\label{ConvInt}
 {\cal P}((0,A),\diff t)=\int\ind{x{\le}A} {\cal P}(\diff x,\diff t).
\end{equation}
By integrating these relations, we obtain, for $j{=}1,\ldots,J$, 
\begin{multline}\label{SDE1}
F_{j,N}(t)= F_{j,N}(0){+}M_{j,N}(t)\\{+}\eta_j\int_{0}^t  \left(C_{j,N}{-}F_{j,N}(s)\right)\diff s{-}\lambda_j \int_0^t F_{j,N}(s)Z_N(s)\diff s,
\end{multline}
and
\begin{multline}\label{SDE2}
  Z_N(t) = Z_N(0){+}M_{z,N}(t){+}\beta t{-}\delta \int_0^t  Z_N(s)\diff s\\
  {+}\sum_{j=1}^J \eta_j\int_{0}^t  (C_{j,N}{-}F_{j,N}(s)\diff s{-}\lambda_j\int_0^t F_{j,N}(s)Z_N(s)\diff s,
\end{multline}
where $(M_{j,N}(t))$ and $(M_{z,N}(t))$ are square integrable martingales whose previsible increasing processes are given by
\begin{align}
\left(\croc{M_{j,N}}(t)\right)&=\left(\eta_j\int_{0}^t  \left(C_{j,N}{-}F_{j,N}(s)\right)\diff s{+}\lambda_j \int_0^t F_{j,N}(s)Z_N(s)\diff s\right),\label{CrocSDE1}\\
\left(\croc{M_{z,N}}(t)\right)&=\left(\beta t{+}\delta \int_0^t  Z_N(s)\diff s {+}\sum_{j=1}^J \croc{M_{j,N}}(t)\right).\label{CrocSDE2}
\end{align}

\subsection*{Invariant Distribution}\label{CRNSec}
As explained in the introduction, our model can be expressed in the framework of chemical reaction networks (CRN).  Since we are mainly interested in the transient behavior of our system, we just give a quick sketch. It is only mentioned as an interesting aspect of our system. See~\citet{Feinberg} for a general introduction on CRNs.   

The corresponding chemical reactions are represented as
\begin{equation}\label{CRN}
\emptyset\mathrel{\mathop{\xrightleftharpoons[\delta]{\beta}}} {\cal Z},\qquad
{\cal Z}+{\cal F}_j\mathrel{\mathop{\xrightleftharpoons[\eta_j]{\lambda_j}}} {\cal FZ_j}, \quad 1{\le}j{\le}J.
\end{equation}
The associated dynamical system $((f_j(t),g_j(t)),z(t))$ is defined by the ODEs
\[
\begin{cases}
  \dot{f}_j(t)=\lambda_j f_j(t)z(t){-}\eta_j g_j(t),\quad 1{\le}j{\le}J,\vspace{2mm} \\ 
  \dot{f}_j(t){+}\dot{g}_j(t)=0, \quad 1{\le}j{\le}J,\vspace{2mm}\\
  \dot{z}(t)=\beta{-}\delta z(t).
\end{cases}
\]
Its fixed point is given by $((u_j,v_j),w)$ with
\begin{equation}\label{eqCRN}
w =\rho_0,\quad u_j=\frac{C^N_j}{1{+}\rho_0\rho_j}=C^N_j{-}v_j,\quad 1{\le}j{\le}J,
\end{equation}
with $\rho_0{=}\beta/\delta$ and $\rho_j{=}\lambda_j/\eta_j$, for $1{\le}j{\le}J$.

The characteristics of this CRN  are:
\begin{itemize}
\item $m{=}2J{+}2$ chemical species: ${\cal Z}$, ${\cal F}_j$, ${\cal FZ_j}$, $j{=}1,\ldots,J$;
\item $\ell{=}J{+}1$ cycles, these are the {\em single linkage classes} of the CRN;
\item The range, the dimension of the stochiometric space,  is $s{=}J{+}1$.
\end{itemize}
This is a CRN with deficiency $\delta{=}m{-}\ell{-}s{=}0$. A standard result, see~\citet{Anderson2010}, gives an explicit expression of the invariant distribution $\pi_N$ of $(X_N(t))$ on ${\cal S}_N$. 
\begin{proposition}\label{Def0}
  The invariant distribution of $(X_N(t))$ on ${\cal S}_N$ is given by, 
  \[
  \pi(f,s)=\frac{1}{Z_N}\frac{w^z}{z!}\prod_{j=1}^J\frac{u_j^{f_j}}{f_j!}\frac{v_j^{C^N_j-f_j}}{(C_j^N{-}f_j)!},\quad (f,z){=}((f_j),z){\in}{\cal S}_N,
  \]
  where $Z_N$ is the normalization constant and $(u_j)$ and $(v_j)$ are defined by Relation~\eqref{eqCRN}.
  \end{proposition}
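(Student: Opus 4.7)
The strategy is to apply the Anderson--Craciun--Kurtz theorem on product-form stationary distributions of deficiency-zero, weakly reversible chemical reaction networks, as formulated in~\citet{Anderson2010}. The CRN~\eqref{CRN} is visibly weakly reversible (every reaction is reversible), and the deficiency has already been computed in the text to be $\delta{=}0$, so the hypotheses of the theorem are met. Alternatively the conclusion can be verified directly via detailed balance, which is the route I would take to keep the argument self-contained.

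First, I would solve for the positive equilibrium $((u_j,v_j),w)$ of the deterministic mass-action ODE. The equation for $z(t)$ immediately yields $w{=}\rho_0$. The conservation relation $f_j(t){+}g_j(t){=}C_j^N$ combined with the stationarity condition $\lambda_j u_j w{=}\eta_j v_j$ gives $v_j{=}\rho_0\rho_j u_j$ and hence $u_j{=}C_j^N/(1{+}\rho_0\rho_j)$, $v_j{=}C_j^N{-}u_j$, in agreement with~\eqref{eqCRN}.

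Next, denote by $\pi(f,z)$ the measure in the statement. The process $(X_N(t))$ is irreducible on ${\cal S}_N$, and the only transitions are the four listed in the $Q$-matrix description. The pairing/splitting transitions and the agent creation/death transitions decouple into two families of reversible neighbor pairs, so it suffices to check the two detailed balance relations
\[
\pi(f,z)\,\lambda_j f_j z = \pi(f{-}e_j,z{-}1)\,\eta_j (C_j^N{-}(f_j{-}1)),
\qquad
\pi(f,z)\,\beta = \pi(f,z{+}1)\,\delta (z{+}1),
\]
for $1{\le}j{\le}J$, $f_j{\ge}1$, $z{\ge}1$. Taking ratios in the explicit product form, the first collapses to $\lambda_j w u_j{=}\eta_j v_j$, which is precisely the equilibrium condition above, and the second collapses to $\beta{=}\delta w$, which holds since $w{=}\rho_0$.

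The remaining point is summability of $\pi$ on ${\cal S}_N$: the $z$-marginal factor is Poisson in $z$, and each $f_j$ factor is a truncated Poisson/binomial weight on the finite range $\{0,\ldots,C_j^N\}$, so the normalizing constant $Z_N$ is finite. Irreducibility together with the existence of this summable reversible measure then identifies $\pi$ as the unique invariant distribution of $(X_N(t))$. The only mildly delicate step is the bookkeeping of factorial and power shifts in the ratio $\pi(f{-}e_j,z{-}1)/\pi(f,z)$; everything else is mechanical.
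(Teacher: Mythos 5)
Your proposal is correct. Its headline strategy is exactly the paper's: the text proves the proposition by observing that the network~\eqref{CRN} is weakly reversible with deficiency zero and invoking the product-form theorem of \citet{Anderson2010}, with the fixed point~\eqref{eqCRN} supplying the parameters. Where you go further is in the self-contained detailed-balance verification, and that part also checks out: each ordered pair of communicating states is linked by exactly one transition type, the ratio $\pi(f{-}e_j,z{-}1)/\pi(f,z)=(z/w)(f_j/u_j)\,v_j/(C_j^N{-}f_j{+}1)$ reduces the first balance equation to $\lambda_j w u_j{=}\eta_j v_j$ and the second to $\beta{=}\delta w$, and summability is clear since $z$ carries a Poisson weight while each $f_j$ ranges over a finite set. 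This route buys something the citation does not: it establishes that $(X_N(t))$ is actually reversible with respect to $\pi$ (the Anderson--Craciun--Kurtz theorem only delivers invariance), which is the "additional (simple) result" the paper alludes to later when stating reversibility of the restricted process $(F^1_N(t))$. One caveat worth flagging: your computation uses the convention $\rho_j{=}\lambda_j/\eta_j$ stated immediately after~\eqref{eqCRN}, which conflicts with Definition~\ref{rho} where $\rho_j{=}\eta_j/\lambda_j$; this is an inconsistency in the paper rather than an error in your argument, but the formula $u_j{=}C_j^N/(1{+}\rho_0\rho_j)$ is only correct under the former convention.
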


\section{Fixed Number of Agents}\label{CloseSec}
Throughout this section the number of agents $C^N_Z$ is fixed, of the order of $r N$, with $r{<}1$ in Section~\ref{SCritSeq}, and  is exactly $N$, the total number of particles,  in Section~\ref{CritSeq}.  There are no creations or degradation of agents. Only pairing and splitting mechanisms operate in these cases.
As explained in the introduction, the purpose is of understanding the behavior of the system when the total number of agents does not change.  Section~\ref{OpenSecF} investigate a much more dynamic version of the system. 

The state space of the system is  ${\cal S}_N{\steq{def}}\prod_{j=1}^J\{0,\ldots,C_j^N\}$. For a state $x{=}(x_j){\in}{\cal S}_N$, the total number of paired particles with an agent is
$N{-}x_1{-}\cdots{-}x_J$. The associated process  in ${\cal S}_N$ is denoted as $(F^r_N(t))$ has the  Markov property,  its $Q$-matrix is given by, for $x{\in}{\cal S}_N$,
\[
x\rightarrow
\begin{cases}
x{+}e_j&\displaystyle\eta_j\left(C_j^N{-}x_j\right),\\
x{-}e_j &\displaystyle \lambda_j x_j\left(C^N_Z{-}(N{-}x_1{-}\cdots{-}x_J)\right),
\end{cases}
\]
where $e_j$ is the $j$th unit vector of $\N^J$.

\subsection{Overloaded Case}\label{SCritSeq}
In this section the total number of agents if of the order of $rN$, with $r{<}1$,
\begin{equation}\label{ScaCZ}
\lim_{N\to+\infty} \frac{C^N_Z}{N}=r. 
\end{equation}
Since there are not enough agents to handle all particles, it is  clear that the number of free particles of type $j$, $1{\le}j{\le}J$, should be of the order of $N$.  
The SDE~\eqref{SDE1} for  $(F^r_N(t))$ becomes, for $1{\le}j{\le}J$,
\begin{multline}\label{SDEe1}
\diff F^r_{N,j}(t) = {\cal P}_{S_j}((0,\eta_j (C_j^N{-}F_{N,j}(t{-}))),\diff t)\\{-}{\cal P}_{F_j}((0,\lambda_j F^r_{N,j}(t{-})Z_N^r(t{-})),\diff t),
\end{multline}
where $(Z^r_N(t))$ is the process of free agents,
\begin{equation}\label{cZ0eq}
\left(Z^r_N(t)\right)=\left(C^N_Z{-}\sum_{j=1}^J \left(C^N_j{-}F^r_{N,j}(t)\right)\right)
=\left(C^N_Z{+}\|F^r_{N}(t)\|{-}N\right). 
\end{equation}
We assume that the initial conditions are such that $Z_N(0){=}z_0$, for some fixed $z_0{\in}\N$, and
\begin{equation}\label{InitOp2}
\lim_{N\to+\infty} \frac{F^r_N(0)}{N}=\overline{f}_0=(\overline{f}_{0,j}){\in}[0,1]^J,
\end{equation}
such that
\begin{equation}\label{InitOp3}
\sum_{j=1}^J \overline{f}_{0,j}=1{-}r. 
\end{equation}
The last condition expresses simply that most of the agents are initially paired with particles. We will see that the number of free agents remains a finite random variable. 
\begin{definition}
For $N{>}0$, the scaled process is defined as 
\begin{equation}\label{Fbar0}
  \left(\overline{F}^r_N(t)\right)\steq{def}\left(\frac{F^r_{N,j}(t)}{N},j{=}1,\ldots,J\right).
\end{equation}
If $g$ is non-negative Borelian function on $\R_+{\times}\N$, we define the occupation measure
\begin{equation}\label{OccMeasDef0}
\croc{\Lambda^r_N,g}\steq{def} \int_{\R_+}g\left(s,Z^r_N(s)\right)\diff s.
\end{equation}
\end{definition}
Since $F^r_{N,j}(t){\le}C_j^N{\le}N$, $1{\le}j{\le}J$, for  $t{\ge}0$,
the state space of the process $(\overline{F}^r_N(t))$ is included in $[0,1]^J$. 

\begin{lemma}\label{LemCl1}
If $N$ is sufficiently large,  there exists a coupling of the process $(Z^r_N(t))$ with $(L(Nt))$, where $(L(t))$ is  an $M/M/\infty$ queue with input rate $\overline{\eta}$ and service rate $\underline{\lambda}r/{2}$, with
\[
\overline{\eta}=\max_j \eta_j, \text{ and }  \underline{\lambda}=\min_j \lambda_j,
\]
such that  $L(0){=}z_0$  and $Z^r_N(t){\le} L(Nt)$ holds for all $t{\ge}0$. 
\end{lemma}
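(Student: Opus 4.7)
The plan is to build both processes on a common probability space via an SDE representation driven by shared Poisson point processes, and then to check the pathwise domination $Z^r_N(t) \le L(Nt)$ by comparing up- and down-jump intensities of the two processes.

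First, I would read off the transition rates of $(Z^r_N(t))$ from the $Q$-matrix in Section~\ref{CloseSec}: it jumps up by $1$ at rate $\sum_{j=1}^J \eta_j(C^N_j - F^r_{N,j}(t))$ and down by $1$ at rate $\sum_{j=1}^J \lambda_j F^r_{N,j}(t) Z^r_N(t)$. The up-rate admits the trivial upper bound $\overline{\eta}\sum_j C^N_j = \overline{\eta} N$, matching exactly the (time-changed) birth rate of $(L(Nt))$. For the down-rate, $\sum_j \lambda_j F^r_{N,j} Z^r_N \ge \underline{\lambda} \|F^r_N\| Z^r_N$, and the identity~\eqref{cZ0eq} yields $\|F^r_N(t)\| = N - C^N_Z + Z^r_N(t) \ge N - C^N_Z$. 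Combined with~\eqref{ScaCZ}, this gives $\|F^r_N(t)\| \ge rN/2$ for $N$ large enough, so the down-rate of $Z^r_N$ is at least $(\underline{\lambda} r/2) N Z^r_N(t)$, matching the (time-changed) death rate of $(L(Nt))$.

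Next, I would set up the coupling explicitly. Let $\mathcal{Q}^+, \mathcal{Q}^-$ be two independent Poisson point processes on $\R_+^2$ with intensity $du\otimes dt$, and define $(L(Nt))$ by
\[
dL(Nt) = \mathcal{Q}^+\bigl((0, \overline{\eta} N), dt\bigr) - \mathcal{Q}^-\bigl((0, (\underline{\lambda} r/2) N L(Nt-)), dt\bigr),\quad L(0)=z_0.
\]
The process $(F^r_N(t))$ is driven by the same $\mathcal{Q}^\pm$: an event of $\mathcal{Q}^+$ at $(u,t)$ with $u$ in the $j$-th sub-window of $(0, \sum_j \eta_j(C^N_j - F^r_{N,j}(t-)))$ triggers $F^r_{N,j}$, and hence $Z^r_N$, to jump up, while an event of $\mathcal{Q}^-$ in the corresponding $j$-th sub-window of $(0, \sum_j \lambda_j F^r_{N,j}(t-) Z^r_N(t-))$ triggers the reverse jump. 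Since the $Z^r_N$ up-window is contained in the $L$ up-window, and, by Step~1, whenever $Z^r_N(t-) = L(Nt-)$ the $L$ down-window is contained in the $Z^r_N$ down-window, the driving windows are nested as required.

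Finally, the pathwise inequality $Z^r_N(t) \le L(Nt)$ follows by a stopping-time argument applied to the first time $\tau$ at which it would fail. On $\{\tau<\infty\}$, by right-continuity and integer-valuedness, one must have $Z^r_N(\tau-) = L(N\tau-)$ and the jump at $\tau$ must be either an up-jump of $Z^r_N$ not shared by $L(Nt)$ or a down-jump of $L(Nt)$ not shared by $Z^r_N$, both of which are excluded by the nesting of driving windows. The main obstacle in the whole argument is the lower bound $\|F^r_N(t)\| \ge rN/2$, which has to hold \emph{pathwise} for every $t\ge 0$; it rests essentially on the structural identity~\eqref{cZ0eq} expressing $\|F^r_N(t)\|$ in terms of the nonnegative process $Z^r_N(t)$, and the ``$N$ sufficiently large'' hypothesis is exactly what is needed to absorb the convergence $C^N_Z/N \to r$ from~\eqref{ScaCZ}.
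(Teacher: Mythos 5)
Your construction follows the same route as the paper's own proof: bound the up-rate of $(Z^r_N(t))$ by $\overline{\eta}N$, bound its per-agent down-rate from below by a constant times $N$, and then couple the two processes through shared Poisson drivers with nested windows. The explicit SDE construction and the stopping-time argument for the pathwise domination are a correct and complete version of what the paper dismisses as ``straightforward to construct''.

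There is, however, one step that does not hold as you state it. From~\eqref{cZ0eq} you correctly get $\|F^r_N(t)\|\ge N{-}C^N_Z$ pathwise, but~\eqref{ScaCZ} gives $N{-}C^N_Z\sim(1{-}r)N$, not a quantity of order $rN$; the inequality $N{-}C^N_Z\ge rN/2$ for $N$ large therefore requires $1{-}r>r/2$, i.e.\ $r<2/3$, and it fails for $r$ close to $1$ no matter how large $N$ is. Taking ``$N$ sufficiently large'' only absorbs the error in $C^N_Z/N\to r$; it cannot turn $(1{-}r)N$ into $rN/2$. The paper's own proof derives the honest bound $\sum_j\lambda_jF^r_{N,j}(t)\ge\underline{\lambda}(N{-}C^N_Z)\ge\underline{\lambda}(1{-}\eps)N$ with $\eps$ slightly larger than $r$, which supports an $M/M/\infty$ service rate of order $\underline{\lambda}(1{-}r)$; the constant $\underline{\lambda}r/2$ in the statement of the lemma appears to be a slip for something like $\underline{\lambda}(1{-}r)/2$ (only an upper bound on $L$ is needed downstream, so the precise constant is harmless there). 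Your write-up, by asserting that the stated constant follows from~\eqref{ScaCZ}, introduces a genuinely false inequality: either restrict to $r<2/3$, or replace the service rate by $\underline{\lambda}(1{-}r)/2$, after which the rest of your argument goes through verbatim.
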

See Section~\ref{MMI} of the appendix on the $M/M/\infty$ queue.
\begin{proof}
This is a simple consequence of the fact that if $Z^r_N(t){=}z{\in}\N$, the rate at which there is a jump of size ${+}1$, resp. ${-}1$, is
  \[
  \sum_{j=1}^N\eta_j\left(C^N_j{-}F^r_{N,j}(t)\right)\leq \overline{\eta}   \sum_{j=1}^NC^N_j=\overline{\eta}N,
  \]
  resp.
  \[
    \sum_{j=1}^N\lambda_jF^r_{N,j}(t) \geq \underline{\lambda} (N{-}C^N_Z)\ge \underline{\lambda}\left(1{-}\eps\right)N,
    \]
for some $\eps{\in}(0,1)$ if $N$ is large enough. It is then straightforward to construct the desired coupling. 
\end{proof}

The integration of the SDE~\eqref{SDEe1} gives the relation
\begin{multline}\label{eqf1}
\overline{F}^r_{j,N}(t) =\overline{F}^r_{j,N}(0){+}M^r_{j,N}(t)\\
- \lambda_j \int_0^{t}\overline{F}^r_{N,j}(s)Z^r_N(s)\diff s{+} \eta_j\int_0^{t}\left(\frac{C_j^N}{N}{-}\overline{F}^r_{N,j}(s)\right)\diff s,
\end{multline}

The process  $(M^r_{j,N}(t))$ is a martingale whose previsible increasing process is
\begin{equation}\label{croceqf1}
  \left(\croc{M^r_{j,N}}(t)\right)=\left(
\frac{\lambda_j}{N} \int_0^t \overline{F}^r_{j,N}(s)Z^r_N(s)\diff s{+}\frac{\eta_j}{N} \int_0^{t}\left(\frac{C_j^N}{N}{-}\overline{F}^r_{N,j}(s)\right)\diff s\right).
\end{equation}

\begin{proposition}\label{PropLLNCl}
Under the assumptions~\eqref{ScaleN},~\eqref{InitOp2}, and~\eqref{InitOp3}for the initial state,   then for the convergence in distribution the relation 
  \[
 \lim_{N\to+\infty} \left(\frac{F^r_N(t)}{N}\right)=(f^r(t))=(f^r_{j}(t)),
  \]
holds,   where $(x(t))$ is the solution of the ODEs, for $1{\le}j{\le}J$ and $t{>}0$,
\begin{equation}\label{EqLaClosed}
\frac{\diff}{\diff t} f^r_{j}(t)=
\lambda_j f^r_{j}(t)\frac{\croc{\eta,c{-}x(t)}}{\croc{\lambda,x(t)}} {-} \eta_j \left(c_j{-}f^r_{j}(t)\right),
\end{equation}
with Definition~\ref{rho}. 
\end{proposition}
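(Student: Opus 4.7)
The approach follows a stochastic averaging scheme in which $(\overline{F}^r_N(t))$ plays the role of the slow process and $(Z^r_N(t))$ plays the role of the fast one. All transitions of the pair $(F^r_N,Z^r_N)$ occur at rates of order $N$, so on any macroscopic time interval $(Z^r_N(t))$ equilibrates rapidly around a distribution dictated by the instantaneous value of $\overline{F}^r_N$, while $(\overline{F}^r_N(t))$ moves at order one. Tightness of $(\overline{F}^r_N(t))$ in the Skorokhod space on $[0,T]$ will follow from an Aldous-type criterion applied to Relations~\eqref{eqf1}--\eqref{croceqf1}: the martingale bracket is $O(1/N)$, and the drift is controlled in expectation because $\E[Z^r_N(s)] \le C$ uniformly in $N$ and $s \in [0,T]$, which is a direct consequence of the coupling $Z^r_N(s) \le L(Ns)$ of Lemma~\ref{LemCl1} and the finite stationary mean of the $M/M/\infty$ queue. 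The same coupling gives tightness of the occupation measures $(\Lambda^r_N)$ on $[0,T] \times \N$, since $\E[\Lambda^r_N([0,T] \times \{M,M{+}1,\ldots\})]$ vanishes as $M \to \infty$ uniformly in $N$.

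Identification of a subsequential limit $(f^r,\Lambda^r)$ of $(\overline{F}^r_N,\Lambda^r_N)$ will proceed via the martingale problem for $(Z^r_N)$. For $g:\N \to \R$ with finite support, the process $g(Z^r_N(t)) - g(Z^r_N(0)) - \int_0^t \mathcal{L}_N g(\overline{F}^r_N(s),Z^r_N(s))\diff s$ is a martingale with bracket of order $N$; dividing the identity by $N$ makes the martingale term vanish in $L^2$ and yields, in the limit,
\[
\int \mathcal{L}_{f^r(s)}\, g(z) \, \Lambda^r(\diff s, \diff z) = 0,
\]
where $\mathcal{L}_f\, g(z) = \croc{\eta,c-f}(g(z{+}1) - g(z)) + z\croc{\lambda,f}(g(z{-}1) - g(z))$ is the generator of an $M/M/\infty$ queue with input rate $\croc{\eta,c-f}$ and service rate $\croc{\lambda,f}$. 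This is the invariance equation; uniqueness of the invariant distribution of this queue forces the disintegration $\Lambda^r(\diff s, \diff z) = \diff s \otimes \pi_s(\diff z)$ with $\pi_s$ Poisson of parameter $\croc{\eta,c-f^r(s)}/\croc{\lambda,f^r(s)}$.

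It remains to pass to the limit in Relation~\eqref{eqf1}. The martingale $(M^r_{j,N}(t))$ and the linear term $\eta_j\int_0^t(C^N_j/N - \overline{F}^r_{N,j}(s))\diff s$ are handled by standard arguments. The main obstacle, and the step I expect to be the most technical, is the quadratic term
\[
\lambda_j \int_0^t \overline{F}^r_{N,j}(s) Z^r_N(s) \diff s = \lambda_j \int \overline{F}^r_{N,j}(s) z \, \Lambda^r_N(\diff s, \diff z),
\]
whose integrand is unbounded in $z$. I would truncate at level $M$: the truncated integral converges by joint weak convergence of $(\overline{F}^r_N,\Lambda^r_N)$ together with the continuity of the integrand, whereas the tail is dominated by $\int_0^t Z^r_N(s)\ind{Z^r_N(s) > M}\diff s$, whose expectation is controlled uniformly in $N$ using Lemma~\ref{LemCl1} and the uniform integrability of the Poisson limit of $L(Ns)$. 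Letting $N \to \infty$ and then $M \to \infty$ gives $\lambda_j\int_0^t f^r_j(s)\croc{\eta,c-f^r(s)}/\croc{\lambda,f^r(s)}\diff s$, hence Equation~\eqref{EqLaClosed}. Summing \eqref{EqLaClosed} over $j$ shows that $\sum_j f^r_j(t) = \sum_j \overline{f}_{0,j} = 1 - r$ is a conserved quantity, so $\croc{\lambda,f^r(t)} \ge \underline{\lambda}(1-r) > 0$ throughout; the vector field is then Lipschitz on the invariant simplex, yielding uniqueness of the limit ODE and convergence of the whole sequence.
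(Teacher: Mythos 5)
Your proposal is correct and follows essentially the same route as the paper: coupling $Z^r_N$ to an $M/M/\infty$ queue, tightness of the scaled process and of the occupation measures via Lemma~\ref{LemCl1}, identification of the fast-variable equilibrium as a Poisson law through the martingale problem, and a truncation/uniform-integrability argument for the quadratic term (the paper defers the identification step to Proposition~\ref{pirep}, which uses exactly the scheme you describe). Your closing observation that $\|f^r(t)\|$ is conserved, which keeps $\croc{\lambda,f^r(t)}$ bounded away from zero and yields uniqueness of the limiting ODE, is a useful detail the paper leaves implicit.
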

\begin{proof}
By using the notations of Lemma~\ref{LemCl1} and the ergodic theorem for positive recurrent Markov processes, it is not difficult to prove that the sequence of processes
  \[
  \left(\int_0^tL(Ns)\diff s\right)
  \]
is tight with the criterion of the modulus of continuity, see Theorem~7.3 of~\citet{Billingsley}, and that its limiting point is  necessary $(\overline{\eta}/\underline{\lambda}\cdot t)$.

Since $F^r_{N,j}(t){\le}C_j^N{\le}N$, for $1{\le}j{\le}J$ and $t{\ge}0$, with Relation~\eqref{croceqf1}, we obtain therefore that the process $(\croc{M^r_{j,N}}(t))$ is converging in distribution to $0$, Doobs' Inequality gives that the same result holds for the martingale $(M^r_{j,N}(t))$.

For $T{>}0$, with Relation~\eqref{eqf1}, the modulus of continuity of $(\overline{F}^r_{j,N}(t))$ on the time interval $[0,T]$ is
\begin{multline*}
\omega_{F_N,T}(\delta)\steq{def}\sup_{\substack{s,t{\le}T\\|s-t|\le \delta}}\left|\overline{F}^r_{j,N}(t)){-}\overline{F}^r_{j,N}(s)\right|
\\ \le \sup_{s\le T}|M^r_{j,N}(s)|+\lambda_j\sup_{\substack{s\le t{\le}T\\|s-t|\le \delta}}\int_s^t L(Nu)\diff u+\delta\eta_j\frac{C_j^N}{N}. 
\end{multline*}
Again with Theorem~7.3 of~\citet{Billingsley}, we deduce that  the sequence of processes $(\overline{F}^r_{N,j}(t))$ is tight. 

For $K{>}0$,
\[
\E(\Lambda^r_N([0,T]{\times}[K,{+}\infty)))\leq \int_0^T\P\left(L(Ns){\ge}K\right)\diff s =\frac{1}{N}\int_0^{NT}\P(L(s){\ge}K)\diff s,
  \]
since $(L(s))$ converges in distribution to a Poisson distribution, see Section~\ref{MMI} of the appendix, the last term can be made arbitrarily small for $K$ sufficiently large. 
Lemma~1.3 of~\citet{Kurtz} gives the tightness of the sequence of random measures $(\Lambda^r_N)$  and any of its limiting points $\Lambda^r_{\infty}$ can be represented as,
\[
\croc{\Lambda^r_\infty,f}=\int_{\R_+{\times}[0,1]^J{\times}\N} f\left(s,x\right)\mu_s(\diff x)\diff s,
\]
if $f$ is a non-negative Borelian function on $[0,T]{\times}\N$, where $(\mu_s)$ is an optional process on ${\cal M}_1(\N)$.

Hence the sequence of random variables $((\overline{F}^r_N(t)), \Lambda^r_\infty)$ is tight, we denote by $((f^r(t)),\Lambda^r_{\infty})$ one of its limiting points. 
Since Proposition~\ref{pirep} establishes a similar result, but in a more difficult technical framework. The analogue of the sequence of processes $(\overline{F}^r_N(t))$ is {\em not} tight in this case. For this reason,  we skip the proof of the fact that, in the above representation of $\Lambda^r_\infty$, $\mu_s$ can be expressed as  a Poisson distribution with parameter
\[
\frac{\croc{\eta,c{-}f^r(s)}}{\croc{\lambda,f^r(s)}},
\]
and that $(f^r(t))$ satisfies the ODE~\eqref{EqLaClosed}. The proposition is proved. 
\end{proof}

\begin{corollary}
With the notations of Proposition~\ref{PropLLNCl}, the equilibrium point $f^r_{\infty}$ of $(f^r(t))$ is given by 
\[
f^r_{\infty}=\left(\frac{\eta_jc_j}{\eta_j{+}\lambda_j h_\infty}\right),
\]
where $h_\infty{=}\phi(1{-}r)$ and  $\phi$ is defined by Relation~\eqref{phi}. 
\end{corollary}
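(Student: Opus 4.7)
The plan is to locate the equilibrium of the ODE~\eqref{EqLaClosed} directly by setting its right-hand side to zero, and then to pin down the resulting unknown ratio through a mass-conservation constraint inherited from Relation~\eqref{cZ0eq}. Setting $\diff f^r_j/\diff t{=}0$ in~\eqref{EqLaClosed} and writing
\[
H\steq{def} \frac{\croc{\eta,c{-}f^r_\infty}}{\croc{\lambda,f^r_\infty}},
\]
one obtains, for every $j{\in}\{1,\ldots,J\}$, the scalar equation $\lambda_j H f^r_{\infty,j}{=}\eta_j(c_j{-}f^r_{\infty,j})$, which solves to
\[
f^r_{\infty,j}=\frac{\eta_j c_j}{\eta_j{+}\lambda_j H}=\frac{\rho_j\, c_j}{\rho_j{+}H},
\]
using $\rho_j{=}\eta_j/\lambda_j$ from Definition~\ref{rho}. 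A direct substitution of this expression back into the defining formula for $H$ reproduces $H$ identically, so the fixed-point system is algebraically consistent for any value of $H$; at this stage $H$ remains a free parameter, to be fixed by a total-mass argument.

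To determine $H$, I use mass conservation. From Relation~\eqref{cZ0eq}, $Z^r_N(t)/N{=}C^N_Z/N{-}1{+}\|F^r_N(t)\|/N$, and Lemma~\ref{LemCl1} provides an $M/M/\infty$ upper bound for $Z^r_N(t)$ whose mean is uniformly bounded in $t$ and $N$. Dividing by $N$ therefore forces $Z^r_N(t)/N{\to}0$ in distribution; combined with the convergence of Proposition~\ref{PropLLNCl} (and assumption~\eqref{InitOp3} at $t{=}0$), this gives $\|f^r(t)\|{=}1{-}r$ for every $t{\ge}0$, and in particular $\|f^r_\infty\|{=}1{-}r$. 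Summing the formula for $f^r_{\infty,j}$ over $j$ then yields
\[
\sum_{j=1}^J \frac{\rho_j}{\rho_j{+}H}\,c_j=1{-}r,
\]
which by Relation~\eqref{phi} characterizes $H$ uniquely as $H{=}\phi(1{-}r)$. This is the claimed value $h_\infty{=}\phi(1{-}r)$.

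The only step that is not bare algebra is the conservation identity $\|f^r_\infty\|{=}1{-}r$; it rests entirely on the $M/M/\infty$ domination of $(Z^r_N)$ established in Lemma~\ref{LemCl1}, so no new estimate is needed beyond what already appears in the proof of Proposition~\ref{PropLLNCl}.
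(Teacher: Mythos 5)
Your proof is correct and is exactly the argument the paper leaves implicit (the corollary is stated without proof): setting the right-hand side of~\eqref{EqLaClosed} to zero yields $f^r_{\infty,j}={\eta_jc_j}/({\eta_j{+}\lambda_j H})$ with $H$ the stationary ratio $\croc{\eta,c{-}f^r_\infty}/\croc{\lambda,f^r_\infty}$, the back-substitution is indeed a tautology, and the constraint $\|f^r_\infty\|{=}1{-}r$ then identifies $H{=}\phi(1{-}r)$ through~\eqref{phi}. One small simplification: the conservation law $\|f^r(t)\|{\equiv}1{-}r$ can be read off directly from the limit ODE itself, since summing~\eqref{EqLaClosed} over $j$ makes the two terms cancel exactly, so re-invoking the $M/M/\infty$ domination of Lemma~\ref{LemCl1} is not needed — though your probabilistic derivation via~\eqref{cZ0eq} and~\eqref{InitOp3} is equally valid.
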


\subsection{Critical Case}\label{CritSeq}
The number of agents is exactly $N$, the total number of particles and there are still no creation or degradation of agents.  We prove that the process of the number of free particles   of type $j{\in}\{1,\ldots\}$, $(F^1_{N,j}(t))$, is of the order of $\sqrt{N}$, with fluctuations of the order of $\sqrt[4]{N}$. See Theorems~\ref{CLLN} and~\ref{CCLT}.

Note that, for $t{\ge}0$, the total number of free agents at time $t$ is 
\[
N-\sum_{j=1}^J \left(C_j^N{-}F^1_{N,j}(t)\right)=\sum_{j=1}^J F^1_{N,j}(t)=\|F^1_N(t)\|.
\]
The $Q$-matrix $Q_f$ of $(F^1_{N,j}(t))$  is thus given  by, for $x{\in}{\cal S}_N$,
\[
x\rightarrow
\begin{cases}
x{+}e_j&\displaystyle\eta_j\left(C_j^N{-}x_j\right),\\
x{-}e_j &\displaystyle \lambda_j x_j\|x\|. 
\end{cases}
\]
\begin{lemma}\label{Lem1Clo}
  If, for $\eta$, $\lambda{>}0$ and $N{\ge}1$, if $(X_N(t))$ is the solution of the SDE,
\[
\diff X_N(t)={\cal P}_{S_1}((0,\eta N),\diff t){-}{\cal P}_{F_1}\left(\left(0,\lambda X_N(t{-})^2\right),\diff t\right),
\]
with $X(0){=}0$, then for any $T{\ge}0$, there exists $K_1{>}0$ such that,
\[
\lim_{N\to+\infty}\P\left(\sup_{t{\le}NT}\frac{X_N(t)}{\sqrt{N}}\ge K_1 \right)=0,
\]
and
\[
\sup_{t{\ge}0}\E\left(X_N(t)^2\right)<{+}\infty. 
\]
\end{lemma}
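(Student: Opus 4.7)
The proof combines a moment bound from the generator of $(X_N(t))$ with a biased-random-walk comparison for the supremum.

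Setting $u_N(t)\coloneqq\E(X_N(t)^2)$ and applying the generator of $(X_N(t))$ to $x\mapsto x^2$ (justified since $X_N(t)$ is dominated by the arrival Poisson count ${\cal P}_{S_1}((0,\eta N){\times}(0,t])$ of mean $\eta Nt$, so that $X_N(t)$ has moments of all orders), one obtains
$$u_N'(t)=\eta N\bigl(2\E(X_N(t))+1\bigr)+\lambda\E(X_N(t)^2)-2\lambda\E(X_N(t)^3).$$
Cauchy-Schwarz gives $\E(X_N(t))\le\sqrt{u_N(t)}$ and the power-mean inequality (valid since $X_N\ge 0$) gives $\E(X_N(t)^3)\ge u_N(t)^{3/2}$, whence
$$u_N'(t)\le 2\eta N\sqrt{u_N(t)}+\eta N+\lambda u_N(t)-2\lambda u_N(t)^{3/2}.$$
For any fixed $M>\eta/\lambda$, the right-hand side evaluated at $u=MN$ equals $2\sqrt{M}(\eta-\lambda M)N^{3/2}+O(N)<0$ for $N$ large; a standard trapping argument starting from $u_N(0)=0$ then yields $\sup_tu_N(t)\le MN$ for $N$ large, and the same argument applied to the largest zero of the right-hand side gives $\sup_tu_N(t)<+\infty$ for every $N$, which proves the second assertion.

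For the first assertion, fix $K_0>\sqrt{\eta/\lambda}$ and observe that at any state $X_N(t)=x\ge K_0\sqrt{N}$, the embedded jump chain of $(X_N(t))$ makes a $+1$ step with probability $\eta N/(\eta N+\lambda x^2)\le p\coloneqq\eta/(\eta+\lambda K_0^2)<1/2$ and a $-1$ step with the complementary probability, which is at least $q\coloneqq 1-p>1/2$. Each excursion of $(X_N(t))$ above level $K_0\sqrt{N}$ is therefore stochastically dominated by the corresponding excursion of a biased random walk with $+1$-probability $p$ started at $K_0\sqrt{N}$, and for such a walk the gambler's-ruin formula gives $\P(\text{height above threshold}\ge k)\le(p/q)^k$. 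The number $I_N$ of excursions in $[0,NT]$ is bounded by the number of arrivals, a Poisson variable of mean $\eta N^2T$, so $I_N=O(N^2)$ with high probability. A maximum-over-excursions argument then gives $\sup_{t\le NT}X_N(t)\le K_0\sqrt{N}+O(\log N)$ with high probability, which yields the first assertion for any $K_1>K_0$. The delicate point is the stochastic domination by i.i.d. copies of the biased-walk excursion, which is handled by the strong Markov property at the successive entry times into $[K_0\sqrt{N},+\infty)$, together with the a priori bound $I_N=O(N^2)$ coming from the moment estimate of the first step.
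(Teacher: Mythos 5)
Your proof is correct, but it follows a genuinely different route from the paper's. The paper proves both assertions from a single pathwise coupling: with $K_0$ chosen so that $\lambda K_0^2{>}\eta$, it constructs an $M/M/1$ queue $(Y(t))$ with arrival rate $\eta$ and service rate $\lambda K_0^2$, started at $0$, such that $X_N(t){\le}K_0\sqrt{N}{+}Y(Nt)$ for all $t$; the first assertion then follows from the exponential growth of the hitting times of a stable $M/M/1$ queue (Proposition~\ref{HitMM1}), and the second from its geometric invariant distribution together with the monotonicity of $t{\mapsto}\E(Y(t)^2)$. You instead treat the two assertions independently: the moment bound comes from applying the generator to $x{\mapsto}x^2$ and closing the resulting differential inequality with Lyapunov's inequality and a trapping argument, while the supremum bound comes from an excursion decomposition above level $K_0\sqrt{N}$, a gambler's-ruin estimate for the height of each excursion, and a union bound over the Poisson-many excursions. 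Both arguments pivot on the same threshold $K_0{>}\sqrt{\eta/\lambda}$, and your excursion estimate is essentially the proof of the hitting-time asymptotics the paper cites, unrolled by hand. What your route buys is self-containedness and sharper quantitative information (an overshoot of order $\log N$ above $K_0\sqrt{N}$, and an explicit $O(N)$ bound on $\E(X_N(t)^2)$); what the paper's route buys is brevity and reuse of the birth-and-death coupling machinery employed elsewhere (Lemmas~\ref{LemCl1} and~\ref{lem1Op}). One small slip in your closing sentence: the bound $I_N{=}O(N^2)$ on the number of excursions comes from the Poisson arrival count, as you correctly state earlier, not from the moment estimate of the first step; this does not affect the argument.
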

\begin{proof}
We fix $K_0$ such that $\lambda K_0^2{>}\eta$. If we define the process  $(Y(t))$ by the SDE
\[
\diff Y(t)={\cal P}_{S_1}((0,\eta),\diff t){-}\ind{Y(t{-}){>}0}{\cal P}_{F_1}((0,\lambda K_0^2),\diff t),
\]
with $Y(0){=}0$. As  in the proof of Proposition~\ref{PropCoup}, by induction on the successive jumps of $(X_N(t))$,  it is easy to show that the relation
\[
X_N(t){\le}K_0\sqrt{N}{+}Y(Nt)
\]
holds almost surely for all $t{>}0$.
The process $(Y(t))$ is a reflected random walk on $\N$, it is usually associated to the $M/M/1$ queue. See Chapter~5 of ~\citet{Robert}. Proposition~5.11 of this reference gives that if $T_A$ is the hitting time of $A$ by $(Y(t))$ then the random variable
\[
\left(\frac{\eta}{\lambda K_0^2}\right)^A T_A
\]
is converging in distribution to an exponential random variable when $A$ goes to infinity. This shows in particular that $(T_A/A^2)$ is converging in distribution to infinity, hence, for any $K{>}0$,
\[
\lim_{N\to+\infty} \P\left(\sup_{t{\le}T}\frac{Y(tN)}{\sqrt{N}}\ge 1 \right)=
\lim_{N\to+\infty} \P\left(T_{\lceil \sqrt{N}\rceil}{\le}{TN} \right)=0. 
\]
This gives the first part of the lemma. We conclude the proof by setting $K_1{=}K_0{+}1$ and remarking that  the invariant distribution of $(Y(t))$ is a geometric distribution with parameter ${\eta}/{(\lambda K_0^2)}$ and that, with a simple coupling argument,  the mapping $t{\to}\E(Y(t)^2)$ is non-decreasing. 
\end{proof}
The next result shows that all coordinates of $(F^1_N(t))$ are at most of the order of $\sqrt{N}$ very quickly independently of the initial point. Theorem~\ref{CLLN} completes this result by showing that the order of magnitude of its  coordinates is exactly $\sqrt{N}$.
\begin{proposition}[Coupling]\label{PropCoup}
  For all $j{\in}\{1,\ldots,J\}$, 
\begin{equation}\label{CoupX}
 F^1_{N,j}(t)\leq    F^1_{N,j}(0){+}X_{N,j}(t), \quad t{\ge}0,
\end{equation}
where the $(X_{N,j}(t))$ are the solutions of the SDEs
\begin{equation}\label{eqXi}
\diff X_{N,j}(t)={\cal P}_{S_j}((0,\overline{\eta}N),\diff t){-}{\cal P}_{F_j}\left(\left(0,\underline{\lambda}X_{N,j}(t{-})^2\right),\diff t\right),\quad 1{\le}j{\le}J,
\end{equation}
 with $(X_j(0)){=}0$ and $\overline{\eta}$ and $\underline{\lambda}$ are defined by Relation~\eqref{overeta}. 

There exists  $\ell_0{>}0$ such that if
\[
\tau_N\steq{def} \inf\left\{t{>}0: F^1_{N,j}(t){\le}\left\lceil \ell_0\sqrt{N}\right\rceil,\forall j{\in}\{1,\ldots, J\}\right\},
\]
then
    \[
\sup_{N{\ge}1}  \sup_{x{\in}{\cal S}_N} \E_x(\tau_N)< {+}\infty. 
    \]
\end{proposition}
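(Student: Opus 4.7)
For inequality~\eqref{CoupX}, the strategy is to couple $(F^1_N(t))$ and $(X_{N,j}(t))$ through the same Poisson point processes $\cal{P}_{S_j}$ and $\cal{P}_{F_j}$, and to show by induction on jump times that the integer-valued gap $D_j(t):=F^1_{N,j}(0)+X_{N,j}(t)-F^1_{N,j}(t)$ stays non-negative. Every up-jump of $F^1_{N,j}$ is produced by an atom of $\cal{P}_{S_j}$ in the region $(0,\eta_j(C^N_j-F^1_{N,j}(t-)))\subset(0,\overline{\eta}N)$, hence it is simultaneously an up-jump of $X_{N,j}$ and $D_j$ is unchanged; an up-jump of $X_{N,j}$ alone only enlarges $D_j$, and a down-jump of $F^1_{N,j}$ also increases $D_j$.

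The one event that could decrease $D_j$ is an atom of $\cal{P}_{F_j}$ triggering a down-jump of $X_{N,j}$ but not of $F^1_{N,j}$. I claim this is harmless. If $D_j(t-)\ge 1$, the drop is by one unit and $D_j(t)\ge 0$. If $D_j(t-)=0$, i.e.\ $F^1_{N,j}(t-)=F^1_{N,j}(0)+X_{N,j}(t-)\ge X_{N,j}(t-)$, then using $\|F^1_N(t-)\|\ge F^1_{N,j}(t-)$ and $\lambda_j\ge \underline{\lambda}$,
\[
\lambda_j F^1_{N,j}(t-)\,\|F^1_N(t-)\|\;\ge\;\lambda_j X_{N,j}(t-)^2\;\ge\;\underline{\lambda}\,X_{N,j}(t-)^2,
\]
so the trigger window $(0,\underline{\lambda}X_{N,j}(t-)^2)$ for the down-jump of $X_{N,j}$ is contained in that of $F^1_{N,j}$, forcing a simultaneous down-jump of $F^1_{N,j}$ and preserving equality. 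The induction closes.

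For the bound on $\E_x(\tau_N)$, the plan is a Lyapunov argument with $V(f):=\|f\|=\sum_{j=1}^J f_j$. A direct computation from the $Q$-matrix of $(F^1_N(t))$ gives
\[
\cal{L}V(f)=\sum_{j=1}^J \eta_j(C^N_j-f_j)-\sum_{j=1}^J \lambda_j f_j\|f\|\;\le\;\overline{\eta}N-\underline{\lambda}\|f\|^2.
\]
Fixing $\ell_0$ with $c_0:=\underline{\lambda}\ell_0^2-\overline{\eta}>0$, one has $\cal{L}V(f)\le -c_0 N$ on $\{\|f\|>\ell_0\sqrt{N}\}$. Setting $\tau_N':=\inf\{t>0:\|F^1_N(t)\|\le \ell_0\sqrt{N}\}$, Dynkin's formula applied at $\tau_N'\wedge t$ and monotone convergence as $t\to\infty$ yield $c_0 N\,\E_x(\tau_N')\le V(x)\le N$, hence $\E_x(\tau_N')\le 1/c_0$ uniformly in $x$ and $N$. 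Since $\|f\|\le \ell_0\sqrt{N}$ forces $f_j\le\lceil\ell_0\sqrt{N}\rceil$ for every $j$, one has $\tau_N\le\tau_N'$ and the uniform bound follows. The main obstacle throughout is the asymmetry between the down-rate of $F^1_{N,j}$, involving the global quantity $\|F^1_N\|$, and that of $X_{N,j}$, involving only $X_{N,j}$; it is resolved by the pointwise comparison at the instant of equality displayed above.
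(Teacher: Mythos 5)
Your proof is correct and follows essentially the same route as the paper: a jump-by-jump coupling of $(F^1_{N,j}(t))$ and $(X_{N,j}(t))$ through the shared Poisson processes ${\cal P}_{S_j}$, ${\cal P}_{F_j}$, followed by a drift argument for $V(f)=\|f\|$ based on the bound ${\cal L}V(f)\le \overline{\eta}N-\underline{\lambda}\|f\|^2$. The only differences are presentational: you spell out the boundary case $D_j(t{-})=0$ of the coupling that the paper leaves implicit, and you apply Dynkin's formula directly where the paper invokes the Foster--Lyapunov criteria (Proposition~8.14 and Theorem~8.13 of Robert's book) and a reduction to the maximal initial state.
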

\begin{proof}
The $Q$-matrix $Q_X$ of the Markov  process $(X_{N,j}(t))$ defined by the SDEs~\eqref{eqXi} is
\[
x\rightarrow
\begin{cases}
x{+}e_j&\overline{\eta} N,\\
x{-}e_j & \underline{\lambda}x_j^2,\\
\end{cases}
\]
clearly $q_f(x,x{+}e_j){\le}q_X(x,x{+}e_j)$ and $q_f(x,x{-}e_j){\ge}q_X(x,x{-}e_j)$.

A simple coupling, by induction on the successive jumps of $(F^1_{N,j}(t))$,  gives that the relation
\[
 F^1_{N,j}(t)\leq F^1_{N,j}(0) {+}X_{N,j}(t)
\]
 holds for all $t{\ge}0$.

To prove the last assertion, in view of Relation~\eqref{CoupX}, it is enough to prove it for the ``maximal'' initial state, i.e.  $(F^1_{N,j}(0)){=}(C_j^N)$. 
If, for $A{>}0$, $\|x\|{>} J A \sqrt{N}$, 
 then, if $g(x){=}\|x\|$, for $x{\in}\N^J$, 
 \[
Q_f(g)(x)\leq J\overline{\eta}N{-}\underline{\lambda}J^2A^2N.
\]
If we choose $\ell{=}J A$  such that $\gamma{=}\underline{\lambda}\ell^2{-}J\overline{\eta}{>}0$, by using Proposition~8.14 and Theorem~8.13 of~\citet{Robert},  we obtain
\[
\E(\tau_N)\le \frac{g(F^1_N(0))}{N\gamma}{=}\frac{1}{\gamma}.
\]
The proposition is proved. 
\end{proof}

\begin{theorem}[Law of Large Numbers]\label{CLLN}
  If
  \[
  \lim_{N\to+\infty} \frac{F^1_N(0)}{\sqrt{N}} =\overline{f}^1_0{=}\left(\overline{f}^1_{0,j}\right), \text{ and }
  \left(\overline{F}_N(t)\right)\steq{def}\left(\frac{F^1_{N,j}\left(t/\sqrt{N}\right)}{\sqrt{N}}\right),
  \]
then the sequence of processes $\left(\overline{F}_N(t)\right)$ is converging in distribution to  
the solution $(\overline{f}^1(t)){=}(\overline{f}^1_j(t))$ of the ODE,
\begin{equation}\label{ell}
\left(\overline{f}^1_j\right)'(t)=c_j\eta_j  {-}\lambda_j\overline{f}^1_j(t)\left\|\overline{f}^1(t)\right\|,
\end{equation}
with $\overline{f}^1(0){=}\overline{f}^1_0$.

The equilibrium point of the ODE~\eqref{ell} is given by
\begin{equation}\label{EquClo}
\overline{f}^1_{j,\infty}=\left({c_j\rho_j}\left/\sqrt{c_1\rho_1{+}\cdots{+}c_J\rho_J}\right.\right),
\end{equation}
where $(\rho_j)$ is given by Definition~\ref{rho}. 
\end{theorem}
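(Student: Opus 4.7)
The plan is to derive a time-changed integral representation for the scaled process $(\overline{F}_N(t))$, establish uniform-in-$N$ a priori bounds, deduce tightness via the modulus-of-continuity criterion, and identify the limit by showing that the martingale part vanishes and the drift converges to the right-hand side of ODE~\eqref{ell}. Using the change of variable $u{=}s\sqrt{N}$ in the SDE~\eqref{SDE1} adapted to the critical case (where $Z^1_N(s){=}\|F^1_N(s)\|$), one obtains for each $j$,
\[
\overline{F}_{N,j}(t)=\overline{F}_{N,j}(0){+}\widetilde{M}_{j,N}(t){+}\int_0^t\!\left[\eta_j\frac{C_j^N}{N}{-}\frac{\eta_j}{\sqrt{N}}\overline{F}_{N,j}(u){-}\lambda_j\overline{F}_{N,j}(u)\|\overline{F}_N(u)\|\right]\!\diff u,
\]
where $\widetilde{M}_{j,N}(t){=}M^1_{j,N}(t/\sqrt{N})/\sqrt{N}$ is a square integrable martingale with $\langle\widetilde M_{j,N}\rangle(t){=}O(1/\sqrt{N})$ on bounded time intervals, as can be seen directly from the analogue of~\eqref{CrocSDE1} together with the a priori bounds obtained below. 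Since $C_j^N/N{\to}c_j$ and the $\eta_j/\sqrt{N}$ term vanishes, formally the limiting equation is~\eqref{ell}.

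The first substantive step is the a priori control. Proposition~\ref{PropCoup} gives $F^1_{N,j}(t){\le}F^1_{N,j}(0){+}X_{N,j}(t)$, and Lemma~\ref{Lem1Clo} applied with $\eta{=}\overline{\eta}$, $\lambda{=}\underline{\lambda}$ yields, for any $T{>}0$, a constant $K_1$ such that
\[
\lim_{N\to\infty}\P\!\left(\sup_{s{\le}\sqrt{N}\,T}\frac{X_{N,j}(s)}{\sqrt{N}}{\ge}K_1\right)=0,
\]
and since $\overline{F}_{N,j}(t){=}F^1_{N,j}(t/\sqrt{N})/\sqrt{N}$, this implies $\sup_{t{\le}T}\overline{F}_{N,j}(t)$ is tight. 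Combined with the assumption that $\overline{F}_{N,j}(0){\to}\overline{f}^1_{0,j}$, the integrand in the drift representation is bounded in probability uniformly on compact time intervals by a deterministic constant. Tightness of $(\overline{F}_{N,j}(t))$ then follows from the criterion based on the modulus of continuity (Theorem~7.3 of~\citet{Billingsley}): the martingale contribution is controlled by Doob's inequality applied to $\widetilde M_{j,N}$, and the drift contribution is at most $\delta$ times the bounding constant on an interval of length $\delta$.

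To identify the limit, let $(\overline{f}^1(t))$ be any subsequential limit of $(\overline{F}_N(t))$. Passing to the limit in the integral representation is straightforward: the martingale vanishes because $\E(\widetilde M_{j,N}(t)^2){=}\E(\langle\widetilde M_{j,N}\rangle(t)){\to}0$; the term $\eta_j\overline{F}_{N,j}(u)/\sqrt{N}$ is dominated in probability by $\eta_j K_1/\sqrt{N}{\to}0$; and the non-linear term converges by continuity of the functional $x{\mapsto}x_j\|x\|$ combined with the a priori bound, using the Skorokhod representation theorem to upgrade convergence in distribution to almost sure convergence on a common probability space. Any limit point $(\overline{f}^1(t))$ therefore satisfies the integral form of~\eqref{ell} with initial condition $\overline{f}^1_0$. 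The right-hand side of~\eqref{ell} is locally Lipschitz in $\overline{f}^1$ on $\R^J_+$, so Cauchy--Lipschitz ensures uniqueness of the solution, and tightness then promotes subsequential convergence to full convergence in distribution. The expression~\eqref{EquClo} for the equilibrium follows by setting the right-hand side of~\eqref{ell} to zero, writing $\overline{f}^1_{j,\infty}{=}c_j\rho_j/\|\overline{f}^1_\infty\|$, and summing over $j$ to get $\|\overline{f}^1_\infty\|{=}\sqrt{c_1\rho_1{+}\cdots{+}c_J\rho_J}$.

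The main technical obstacle is the quadratic term $\lambda_j\overline{F}_{N,j}(u)\|\overline{F}_N(u)\|$: without a pathwise bound, passing to the limit is delicate because the product is not continuous for the topology of weak convergence of occupation measures alone. The coupling of Proposition~\ref{PropCoup} with the $M/M/1$-type processes of Lemma~\ref{Lem1Clo} is precisely what supplies the uniform $O(\sqrt{N})$ bound needed to make the identification step work; this is the substitute, in the critical regime, for the positive-recurrence arguments used for the overloaded case in Proposition~\ref{PropLLNCl}.
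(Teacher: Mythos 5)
Your proposal is correct and follows essentially the same route as the paper: the time-changed integral representation is the paper's Relation~\eqref{SDE2Clo}, the a priori $O(\sqrt{N})$ control comes from the coupling of Proposition~\ref{PropCoup} with Lemma~\ref{Lem1Clo} exactly as in the paper, the martingale is killed through its increasing process and Doob's inequality, and tightness plus identification proceed via the modulus-of-continuity criterion. The only differences are that you spell out steps the paper compresses into ``standard arguments'' (Skorokhod representation for the quadratic term, Cauchy--Lipschitz uniqueness) and that you verify the equilibrium formula~\eqref{EquClo}, which the paper states without computation.
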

\begin{proof}
By integration of Relation~\eqref{SDECloo}, we obtain, for $t{\ge}0$, 
\begin{multline}\label{SDE2Clo}
\overline{F}_{N,j}(t)= \overline{F}_{N,j}(0){+}M_{N,j}^0(t)\\{+}\eta_j\int_0^t \left(\frac{C_j^N}{N}{-}\frac{\overline{F}_{N,j}(s)}{\sqrt{N}}\right)\diff s{-}\lambda_j\int_{0}^t \overline{F}_{N,j}(s)\sum_{k=1}^J \overline{F}_{N,k}(s)\diff s,
\end{multline}
where $(M_{N}^0(t)){=}(M_{N,j}^0(t),1{\le}j{\le}J)$ is the martingale defined by, for $1{\le}j{\le}J$, 
\begin{multline}
 M_{N,j}^0(t)\steq{def}\\\frac{1}{\sqrt{N}}\int_0^{t/\sqrt{N}} \left[{\cal P}_{S_j}((0,\eta_j(C_j^N{-}F^1_{N,j}(s{-}))),\diff s){-}\eta_j(C_j^N{-}F^1_{N,j}(s))\diff s\right]\\{-} 
\frac{1}{\sqrt{N}}\int_0^{t/\sqrt{N}} \hspace{-1mm}\left[{\cal P}_{F_j}\left(\left(0,\lambda_j F^1_{N,j}(s{-})\sum_{k=1}^J \overline{F}_{N,k}(s{-})\right),\diff s\hspace{-1mm}\right)\right.\\\rule{0mm}{5mm}\left.{-}\lambda_j F^1_{N,j}(s)\sum_{k=1}^J \overline{F}_{N,k}(s)\diff s\right].
\end{multline}
Its previsible increasing process is given by
\begin{multline}\label{crocMClo}
\left(\croc{M^0_{N,j}}(t)\right)=
\left(\frac{\eta_j}{\sqrt{N}}\int_0^t \left(\frac{C_j^N}{N}{-}\frac{\overline{F}_{N,j}(s)}{\sqrt{N}}\right)\diff s\right.\\\left.{+}\frac{\lambda_j}{\sqrt{N}}\int_{0}^t\overline{F}_{N,j}(s)\sum_{k=1}^J \overline{F}_{N,k}(s)\diff s\right),
\end{multline}
and $\croc{M^0_{N,j},M^0_{N,k}}(t){=}0$, for $1{\leq}j{\not=}k{\le}J$. 
Lemma~\ref{Lem1Clo} shows the convergence
\[
\lim_{N\to+\infty}\left(\E\left(\croc{M^0_{N,j}(t),M^0_{N,k}}(t)\right),1{\le}j,k{\le}J\right)=0,
\]
and, with Doob's Inequality, the martingale  $(M_N^0(t))$ converges to $0$, and also that, for the convergence in distribution
\[
\lim_{N\to+\infty}\left(\frac{\overline{F}_{N,j}(t)}{\sqrt{N}}\right)=0.
\]
Standard arguments, using the criterion of the modulus of continuity, see Theorem~7.3~\citet{Billingsley} for example,  give that the sequence of processes $(\overline{F}_N(t))$ is tight and  that any limiting point $(\overline{f}^1(t)){=}(\overline{f}^1_j(t))$ satisfies the identity
\begin{equation}\label{ODEell}
\overline{f}^1_j(t)=\overline{f}^1_{j}(0){+}\eta_j c_j t{-}\lambda_j\int_0^t\overline{f}^1_j(s)\sum_{k=1}^J \overline{f}^1_{k}(s)\diff s.
\end{equation}
The theorem is proved.
\end{proof}

The fluctuations of $(F^1_N(t))$ on the timescale $(t/\sqrt{N})$ are now developed in the following theorem. 
\begin{theorem}[Central Limit Theorem]\label{CCLT}
Under the assumption on the initial state of Theorem~\ref{CLLN}, if 
  \[
 \left(\widehat{F}^1_N(t)\right)= \left(\widehat{F}^1_{N,j}(t)\right)\steq{def}\left(\frac{F^1_{N,j}\left(t/\sqrt{N}\right){-}\sqrt{N}\,\overline{f}^1_j(t)}{\sqrt[4]{N}}\right),
  \]
where $\left(\overline{f}^1(t)\right)$ is defined by Relation~\eqref{ell} and
  \[
\lim_{N\to+\infty} \widehat{F}^1_N(0) = \widehat{f}^1_0\in\R^J,
\]
then the sequence of processes $(\widehat{F}^1_N(t))$ is converging in distribution to $(\widehat{F}^1(t))$, the solution of the SDE
\begin{multline}\label{SDECloo}
\diff \widehat{F}^1_j(t)= \sqrt{{-}\left(\overline{f}^1_{j}\right)'(t){+}2\eta_jc_j }\,\diff B_j(t)
\\{-}\lambda_j \left(\widehat{F}^1_{j}(t)\left\|\overline{f}^1(t)\right\|{+}\overline{f}^1_{j}(t)\left\|\widehat{F}^1(t)\right\|\right)\diff t,
\end{multline}
with $\widehat{F}^1(0){=}\widehat{f}^1_0$, where $(B_j(t))$ is the standard Brownian motion in $\R^J$. 
\end{theorem}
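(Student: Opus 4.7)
The plan is to derive a semimartingale decomposition for $(\widehat{F}^1_N(t))$, pass to the limit using a functional martingale CLT for the noise and a Gronwall estimate for the drift, and identify the limit by uniqueness of a linear SDE. Starting from the integrated equation~\eqref{SDE2Clo} for $(\overline{F}_{N,j}(t))$ and subtracting the ODE~\eqref{ODEell} satisfied by $\overline{f}^1_j$, a multiplication by $\sqrt[4]{N}$ together with the algebraic identity
\[
\sqrt[4]{N}\bigl(\overline{F}_{N,j}\|\overline{F}_N\| - \overline{f}^1_j\|\overline{f}^1\|\bigr) = \widehat{F}^1_{N,j}\|\overline{F}_N\| + \overline{f}^1_j\sum_{k=1}^J\widehat{F}^1_{N,k}
\]
leads to the decomposition
\begin{multline*}
\widehat{F}^1_{N,j}(t) = \widehat{F}^1_{N,j}(0) + \sqrt[4]{N}\,M^0_{N,j}(t) + \varepsilon_{N,j}(t) \\
{}- \lambda_j\int_0^t\Bigl[\widehat{F}^1_{N,j}(s)\|\overline{F}_N(s)\| + \overline{f}^1_j(s)\sum_{k=1}^J\widehat{F}^1_{N,k}(s)\Bigr]\diff s,
\end{multline*}
where $(\varepsilon_{N,j}(t))$ collects the deterministic term $\sqrt[4]{N}\eta_j(C_j^N/N - c_j)t$, negligible under the mild strengthening $C_j^N/N - c_j = o(N^{-1/4})$ of~\eqref{ScaleN} (automatic if $C_j^N=\lfloor c_j N\rfloor$), and the $O(N^{-1/4})$ term $\eta_j N^{-1/4}\int_0^t\overline{F}_{N,j}(s)\diff s$, negligible because $(\overline{F}_N(t))$ is uniformly bounded by Theorem~\ref{CLLN}.

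For the noise I would apply the functional CLT for locally square-integrable martingales. From~\eqref{crocMClo}, the previsible bracket of $\sqrt[4]{N}M^0_{N,j}$ equals
\[
\eta_j\int_0^t\Bigl(\frac{C_j^N}{N} - \frac{\overline{F}_{N,j}(s)}{\sqrt{N}}\Bigr)\diff s + \lambda_j\int_0^t\overline{F}_{N,j}(s)\|\overline{F}_N(s)\|\diff s,
\]
which, by Theorem~\ref{CLLN} and the ODE~\eqref{ell} rewritten as $\lambda_j\overline{f}^1_j\|\overline{f}^1\| = \eta_j c_j - (\overline{f}^1_j)'$, converges in probability to $\int_0^t[2\eta_j c_j - (\overline{f}^1_j)'(s)]\diff s$, precisely the variance prescribed in~\eqref{SDECloo}. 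Cross-brackets $\croc{M^0_{N,j},M^0_{N,k}}$ vanish for $j\neq k$ because the driving Poisson measures are independent, and the jumps of $\sqrt[4]{N}M^0_{N,j}$ are bounded by $N^{-1/4}\to 0$. These inputs yield the joint convergence in distribution of $(\sqrt[4]{N}M^0_{N,j})_{j=1}^J$ to $\bigl(\int_0^t\sqrt{2\eta_j c_j - (\overline{f}^1_j)'(s)}\diff B_j(s)\bigr)_{j=1}^J$ with $(B_j)$ a standard Brownian motion in $\R^J$.

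Tightness of $(\widehat{F}^1_N(t))$ then follows from a Gronwall argument: squaring the decomposition above, taking expectations, and using Doob's inequality on the martingale together with the uniform boundedness of $\overline{F}_N$ and $\overline{f}^1$, one obtains $\E\sup_{s\le t}\|\widehat{F}^1_N(s)\|^2 \le C_1 + C_2\int_0^t\E\sup_{u\le s}\|\widehat{F}^1_N(u)\|^2\diff s$ on $[0,T]$, hence a uniform-in-$N$ $L^2$ bound. The modulus of continuity criterion (Theorem~7.3 of~\citet{Billingsley}) applied to the integral term, whose increments are $L^2$-controlled by this bound, yields tightness; any limit point then satisfies the integrated form of~\eqref{SDECloo}, a linear SDE with bounded continuous coefficients in $\widehat{F}^1$, which admits a unique strong solution. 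Uniqueness in law identifies the limit and gives the convergence in distribution. The \emph{main technical obstacle} will be closing the apparent circular dependence between tightness and the linearization of the quadratic drift: replacing $\|\overline{F}_N\|$ by $\|\overline{f}^1\|$ in the factor multiplying $\widehat{F}^1_{N,j}$ a priori requires an $L^2$ bound on $\widehat{F}^1_N$, which is itself obtained via Gronwall on the pre-limit equation. This is resolved because the pre-limit equation is \emph{already} linear in $\widehat{F}^1_N$ with bounded deterministic and random coefficients ($\overline{f}^1$ and $\overline{F}_N$), so the Gronwall bound closes on itself without needing the replacement to have taken place first.
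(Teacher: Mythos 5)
Your proposal is correct and follows essentially the same route as the paper: the same linearized semimartingale decomposition (the paper's Relation~\eqref{eqb1}), the same identification of the limiting bracket $\int_0^t[2\eta_jc_j-(\overline{f}^1_j)'(s)]\diff s$ via the ODE~\eqref{ODEell} and the functional martingale CLT, and the same standard tightness/uniqueness closing argument that the paper delegates to Theorem~6.14 of~\citet{Robert}. Your explicit remark that the centering term $\eta_j(C_j^N{-}c_jN)N^{-3/4}t$ requires $C_j^N/N{-}c_j=o(N^{-1/4})$ is a fair point the paper leaves implicit, but it does not change the argument.
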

\begin{proof}
Relations~\eqref{SDECloo} and~\eqref{ODEell} give the identity,
\begin{multline}\label{eqb1}
\widehat{F}^1_{N,j}(t)=\sqrt[4]{N}\left(\overline{F}^1_{N,j}(t){-}\overline{f}^1_j(t)\right)= \widehat{F}^1_{N,j}(0){+}\sqrt[4]{N}M_{N,j}^0(t)\\{+}\eta_j\frac{C_j^N{-}c_jN}{N^{3/4}}t  {-}\eta_j\int_0^t \frac{\overline{F}^1_{N,j}(s)}{\sqrt[4]{N}}\diff s\\{-}\lambda_j\int_{0}^t \widehat{F}^1_{N,j}(s)\sum_{k=1}^J \overline{F}^1_{N,k}(s)\diff s{-}\lambda_j\int_{0}^t \overline{F}^1_{N,j}(s)\sum_{k=1}^J \widehat{F}^1_{N,k}(s)\diff s, 
\end{multline}
and, with Relation~\eqref{crocMClo}, for $1{\le}j{\le}J$, 
\begin{multline*}
\left(\croc{\sqrt[4]{N}M^0_{N,j}}(t)\right)=
\left(\eta_j\int_0^t \left(\frac{C_j^N}{N}{-}\frac{\overline{F}^1_{N,j}(s)}{\sqrt{N}}\right)\diff s\right.\\\left.{+}\lambda_j\int_{0}^t\overline{F}^1_{N,j}(s)\sum_{k=1}^J \overline{F}^1_{N,k}(s)\diff s\right).
\end{multline*}
From Lemma~\ref{Lem1Clo} and Theorem~\ref{CLLN}, we obtain that, for the convergence in distribution,  the relation
\begin{multline*}
\lim_{N\to+\infty}\left(\croc{\sqrt[4]{N}M^0_{N,j}}(t)\right)=
\left(\eta_jc_j t{+}\lambda_j\int_{0}^t\overline{f}^1_{j}(s)\sum_{k=1}^J \overline{f}^1_{k}(s)\diff s\right)\\
=\left(\overline{f}^1_{j}(0){-}\overline{f}^1_{j}(t){+}2\eta_jc_j t\right),
\end{multline*}
holds, by Relation~\eqref{ODEell}. Recall that
\[
\left(\croc{\sqrt[4]{N}M^0_{N,j},\sqrt[4]{N}M^0_{N,k}}(t)\right){=}0
\]
holds  for $1{\le}j{\ne}k{\le}J$.  Theorem 1.4 page~339 of~\citet{Ethier} shows that the sequence of martingales $(\widehat{M}_N^0(t))$ converges in distribution to the distribution of the process
\[
\left(\int_0^t \sqrt{{-}(\overline{f}^1_{j})'(s){+}2\eta_jc_j }B_j(\diff s)\right),
\]
where $(B_j(t))$ is a standard Brownian motion on $\R^J$. 
Using again Lemma~\eqref{Lem1Clo}, we have 
\[
\lim_{N\to+\infty}   \left(\int_0^t \frac{\overline{F}^1_{N,j}(s)}{\sqrt[4]{N}}\diff s\right)=(0). 
\]
The rest of the proof is standard, first by showing the tightness of $(\widehat{F}^1_N(t))$ and secondly by identifying it as the solution of an SDE. See the proof of Theorem~6.14 of~\cite{Robert} for example. The theorem is proved. 
\end{proof}
The following proposition shows that the  invariant distribution of the Markov process $(F^1_N(t))$ has in fact a simple expression. This is a consequence of Proposition~\ref{Def0}, the reversibility property is in fact the additional (simple) result. 
\begin{proposition}[Invariant Distribution]
The Markov process $(F^1_N(t))$ is re\-versi\-ble, and its invariant distribution $\pi_N$
\begin{equation}\label{InvClosed}
\pi_N(x)=\frac{1}{Z_N}\frac{1}{\|x\|!}\prod_{j=1}^J \rho_j^{x_j}\frac{C_j^N!}{(C_j^N{-}x_j)!x_j!},\qquad x{\in}{\cal S}_N,
\end{equation}
where $Z_N$ is a normalizing constant.
\end{proposition}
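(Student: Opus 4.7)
The plan is to prove both assertions at once by verifying the detailed balance equations of the candidate measure, since this immediately yields reversibility, invariance, and the identification of $Z_N$ as the normalising constant. The process $(F^1_N(t))$ jumps only between nearest neighbours $x$ and $x{+}e_j$, $1{\le}j{\le}J$, at respective rates
\[
q_f(x,x{+}e_j)=\eta_j(C_j^N{-}x_j),\qquad q_f(x{+}e_j,x)=\lambda_j(x_j{+}1)(\|x\|{+}1),
\]
where the second expression uses $\|x{+}e_j\|{=}\|x\|{+}1$. It therefore suffices to check, for every admissible edge, the identity
\[
\pi_N(x)\,\eta_j(C_j^N{-}x_j)=\pi_N(x{+}e_j)\,\lambda_j(x_j{+}1)(\|x\|{+}1).
\]

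Next I would compute the ratio of the announced measure at neighbouring states. Only two factors change: the prefactor $1/\|x\|!$ picks up $1/(\|x\|{+}1)$, and the $j$th factor in the product is multiplied by $\rho_j(C_j^N{-}x_j)/(x_j{+}1)$, yielding
\[
\frac{\pi_N(x{+}e_j)}{\pi_N(x)}=\frac{\rho_j(C_j^N{-}x_j)}{(\|x\|{+}1)(x_j{+}1)}.
\]
Inserting this into the right-hand side of detailed balance collapses all combinatorial factors and leaves $\pi_N(x)\,\rho_j\lambda_j(C_j^N{-}x_j)$, which coincides with the left-hand side since $\rho_j\lambda_j{=}\eta_j$ by Definition~\ref{rho}. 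As the transition graph on the finite set ${\cal S}_N$ generated by the moves $\pm e_j$ is connected, the process is reversible with $\pi_N$ as its invariant distribution, $Z_N$ being just the normalising constant.

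No real obstacle is expected: the whole argument reduces to the one algebraic identity $\rho_j\lambda_j{=}\eta_j$, valid for every $j$ and every edge simultaneously. As a complementary remark, the expression for $\pi_N$ can also be recovered from Proposition~\ref{Def0} by evaluating its product-form invariant measure on the slice $z{=}\|f\|$ imposed by the conservation $C_Z^N{=}N$ of the total number of agents, and absorbing all resulting multiplicative constants into $Z_N$; in that perspective the genuinely new content of the present statement is the reversibility, which the detailed balance check above supplies at no extra cost.
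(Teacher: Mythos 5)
Your detailed-balance verification is correct and complete: the only transitions are between nearest neighbours $x$ and $x{+}e_j$, the ratio $\pi_N(x{+}e_j)/\pi_N(x)={\rho_j(C_j^N{-}x_j)}/{\bigl((\|x\|{+}1)(x_j{+}1)\bigr)}$ is computed correctly, and the balance equation collapses to $\lambda_j\rho_j{=}\eta_j$, which holds with the convention $\rho_j{=}\eta_j/\lambda_j$ of Definition~\ref{rho} (note that this is the convention forced by the formula~\eqref{InvClosed}; the opposite convention $\rho_j{=}\lambda_j/\eta_j$ appearing after Relation~\eqref{eqCRN} would not work here). Irreducibility on the finite set ${\cal S}_N$ is also correctly justified, since $x_j{\ge}1$ implies $\|x\|{\ge}1$ so every downward rate along an admissible edge is positive. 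The paper itself writes no proof: it derives the product form as a consequence of Proposition~\ref{Def0} (the deficiency-zero chemical reaction network theorem, restricted to the conservation slice $z{=}\|f\|$) and describes reversibility as a simple additional observation. Your route is therefore genuinely different in emphasis: a single elementary detailed-balance computation yields invariance, reversibility and the identification of $Z_N$ simultaneously, at the cost of not explaining where the formula comes from, whereas the paper's route explains the origin of the product form but must still perform essentially your computation to obtain reversibility. Your closing remark correctly reconciles the two, so nothing is missing.
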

A version of Theorems~\eqref{CLLN} and~\eqref{CCLT} could probably be considered via a saddle-point analysis of the constant $Z_N$. This is not done in this paper. 

\section{Dynamical Arrivals}\label{OpenSecF}
If the systems starts with few agents so that most of $N$  particles are ``free'', when  an agent created, it  is paired with a free particle right away, at a rate proportional to $N$. This will happen repeatedly, via the successive steps of sequestration/de-sequestration,  as long the number of free particles is sufficiently ``large'' so that sequestration occurs always before  the degradation/death  of an agent.  The precise result is in fact a little more subtle than that.  We show that, in the limit, on the timescale $t{\mapsto}Nt$,  there remains a positive fraction of free particles of the order of $N$. 

The state descriptor of the pairing process is in this case
\[
(X_N(t)){=}(F_N(t),Z_N(t))=((F_{j,N}(t), j{=}1,\ldots, J),Z_N(t)).
\]
It can be expressed as the solution of  the SDEs~\eqref{SDEd1} and~\eqref{SDEd2}, the initial conditions are assumed to satisfy the following  scaling relations
\begin{equation}\label{InitOp}
\lim_{N\to+\infty} \frac{F_N(0)}{N}=\overline{f}_0\ne 0, \overline{f}_0{=}(\overline{f}_{0,j}){\in}\prod_{j=1}^J [0,c_j]^J, \text{ and } Z_N(0){=}z_0{\in}\N,
\end{equation}
where $c{=}(c_j)$ is defined by Relation~\eqref{ScaleN}.

Initially, a fraction  $\overline{f}_{0,j}$ of particles of type $j{\in}\{1,\ldots,J\}$ are  free and there are $z_0$ free agents. Since the external input rate of agents is constant and equal to $\beta$,  in order to have a positive fraction  in $N$ of particles paired with an agent, the natural time scale  to consider is, at least,  $t{\mapsto}Nt$.

The setting of the analysis will be that of averaging principles as presented in~\citet{Kurtz}. As it will be seen there are specific technical difficulties related to the scaling framework which we introduce now. 
\begin{definition}[Scaled Processes]\label{ScalDef}
 For $N{>}0$,   $(\overline{X}_N(t)){\steq{def}}(\overline{F}_N(t),Z_N(Nt))$, with 
\begin{equation}\label{Fbar}
\left(\overline{F}_N(t)\right)\steq{def}\left(\frac{F_N(N t)}{N}\right)=\left(\frac{F_{j,N}(N t)}{N},j{=}1,\ldots,J\right){\in}\prod_{j=1}^J\left[0,\frac{C_{j,N}}{N}\right].
\end{equation}
For $t{\ge}0$, we have $\|F_N(t)\|{\le}1$ since $F_{j,N}(t){\le}C_j^N$,for all $1{\le}j{\le}J$. 

The {\em occupation measure} is the random measure on $H{\steq{def}}\R_+{\times}[0,1]^{J}{\times}\N$ defined by
\begin{equation}\label{OccMeasDefN}
  \croc{\Lambda_N,g}\steq{def} \int_{0}^{+\infty}g\left(s,\left(\frac{F_{j,N}(Ns)}{N}\right),Z_N(Ns)\right)\diff s,
\end{equation}
for  a continuous function $g$ with compact support on $H$, 
\end{definition}
Note that the ``slow'' process $(\overline{F}_N(t))$ is included in the definition of the occupation measure $\Lambda_N$. The reason is that the timescale is too fast, of the order of $N^2$ in fact,  to get directly convenient tightness properties  for the sequence of processes $(\overline{F}_{j,N}(t))$.

We can have a glimpse of this problem as follows. If $(\overline{M}_{j,N}(t))$ is the martingale of Relation~\eqref{SDE1}, it does not clearly converges in distribution to $0$ as $N$ gets large as it could be expected if a ``standard'' averaging principle were true.  Indeed Relation~\eqref{CrocSDE1} gives, for $1{\le}j{\le}J$, 
\[
\left(\croc{\overline{M}_{j,N}}(t)\right)=
\left(\eta_j\int_{0}^t  \left(\frac{C_{j,N}}{N}{-}\overline{F}_{j,N}(s)\right)\diff s{+}\lambda_j \int_0^t \overline{F}_{j,N}(s)Z_N(Ns)\diff s\right),
\]
which does not seem to vanish. 

We state the main result of this paper. 
\begin{theorem}[Averaging Principle]\label{TheoLLN}
Under the scaling assumption~\eqref{ScaleN} and if $(F_N(0)/N)$ converges to $\overline{f}_0{\ne}0$, then  the sequence of processes $(\|F_N(Nt)\|/N)$ converges in distribution to $(H(t))$, defined by, for $t{\ge}0$, $H(t){\in}(0,1)$ is the unique solution of the relation
\begin{equation}\label{NormLim}
  \int_{\|\overline{f}_0\|}^{H(t)}\frac{1}{\delta \phi(u){-}\beta}\diff u=t,
\end{equation}
where $\phi$ is defined by Relation~\eqref{phi}.

Furthermore the sequence $(\Lambda_N)$ is converging in distribution to the measure $\Lambda_\infty$ on $H{=}\R_+{\times}[0,1]^J{\times}\N$, such that
\begin{equation}\label{OccLim}
\croc{\Lambda_\infty,g}\steq{def} \int_{\R_+{\times}\N} g\left(s,\left(f_j(s)\right),x\right)P_{\phi(H(s))}(\diff x)\diff s,
\end{equation}
for  a non-negative Borelian function $g$ on $H$,  where, for $1{\le}j{\le}J$,
\begin{equation}\label{LimFi}
f_j(t)=\frac{\rho_j}{\rho_j{+}\phi(H(t))}c_j,
\end{equation}
and, for $y{>}0$, $P_y$ is a Poisson distribution with parameter $y$. 
\end{theorem}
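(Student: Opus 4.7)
The plan is to adapt the averaging-principle framework of~\citet{Kurtz}, with the essential twist that the ``slow'' coordinate $\overline{F}_N$ must be embedded in the occupation measure $\Lambda_N$ itself, since its path-valued version fails to be tight for the topology of uniform convergence. The starting observation is that pairing and splitting events preserve $\|F_N\|{-}Z_N$, so summing the SDEs~\eqref{SDE1}--\eqref{SDE2} yields the conservation identity
\[
\|F_N(t)\| {-} \|F_N(0)\| = Z_N(t) {-} Z_N(0) {-} \beta t {+} \delta \int_0^t Z_N(s)\,\diff s {-} M_0(t),
\]
where $(M_0(t))$ is a square integrable martingale with $\croc{M_0}(t) = \beta t {+} \delta\int_0^t Z_N(s)\,\diff s$. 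Rescaling $t{\mapsto}Nt$ and dividing by $N$ isolates the dynamics of $H_N(t)\steq{def}\|F_N(Nt)\|/N$, and all of the subsequent analysis is organized around this identity.

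The first step is to establish uniform bounds on $\E(Z_N(s))$ for $s{\le}NT$ via a coupling of $(Z_N(s))$ with an $M/M/\infty$ queue in the spirit of Lemma~\ref{LemCl1}. These estimates give (i)~tightness of $(H_N)$ by the modulus-of-continuity criterion (Theorem~7.3 of~\citet{Billingsley}), using that $M_0(Nt)/N{\to}0$ in $L^2$ and that the remaining terms of the rescaled identity are Lipschitz in expectation, and (ii)~tightness of $(\Lambda_N)$ on $\R_+{\times}[0,1]^J{\times}\N$ via Lemma~1.3 of~\citet{Kurtz}, thanks to the tail bound $\E(\Lambda_N([0,T]{\times}[0,1]^J{\times}[K,{+}\infty))){\le}\int_0^T \P(Z_N(Ns){\ge}K)\,\diff s$. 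Any limit point of $(\Lambda_N)$ then disintegrates as $\Lambda_\infty(\diff u,\diff f,\diff z) = \mu_u(\diff f,\diff z)\,\diff u$.

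The core technical work is to identify $\mu_u$, which requires two generator calculations. For $g{\in}{\cal C}_c(\N)$, Dynkin's formula applied to $g(Z_N(Nt))$ and division by $N^2$ show, in the limit, that $\int \widetilde Q_f[g](z)\,\mu_u(\diff f,\diff z) {=} 0$ for a.e.~$u$, where $\widetilde Q_f[g](z) {=} \croc{\eta,c{-}f}(g(z{+}1){-}g(z)) {+} \croc{\lambda,f} z(g(z{-}1){-}g(z))$ is the generator of an $M/M/\infty$ queue; ergodicity of this queue forces $\mu_u(\cdot\,|\,f)$ to be the Poisson distribution with parameter $\croc{\eta,c{-}f}/\croc{\lambda,f}$. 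The hard step---the main obstacle of the proof---is the identification of the $f$-marginal, since $(\overline{F}_N(t))$ is not tight in path-space. For this, I would apply Dynkin's formula to $h(\overline{F}_N(t))$ with $h{\in}C^1([0,1]^J)$, use the Taylor expansion $h(f{\pm}e_j/N){-}h(f) {=} {\pm}\partial_j h(f)/N {+} O(1/N^2)$, and divide by $N$; the limit equation reads
\[
\int \sum_{j=1}^J \partial_j h(f)\bigl[\eta_j(c_j{-}f_j) {-} \lambda_j f_j z\bigr]\,\mu_u(\diff f,\diff z) {=} 0,\quad \text{a.e.\ }u,\ \forall h.
\]
Integrating the Poisson conditional and using $\lambda_j\rho_j {=} \eta_j$ yields $f_j {=} c_j\rho_j/(\rho_j{+}y)$ with $y {=} \croc{\eta,c{-}f}/\croc{\lambda,f}$; a direct algebraic check then gives $y {=} \phi(\|f\|)$, so $\mu_u^{(1)}$ is concentrated on the one-dimensional curve~\eqref{LimFi} parameterized by $\|f\|$.

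To close the argument, testing $\Lambda_N$ against $\varphi(s)\psi(\|f\|)$ and using the uniform convergence of $H_N$ show that the $\|f\|$-marginal of $\mu_u$ is $\delta_{H(u)}$, whence $\mu_u {=} \delta_{f(u)}{\otimes}P_{\phi(H(u))}$, which is~\eqref{OccLim}. Substituting $\int z\,\mu_u(\diff f,\diff z) {=} \phi(H(u))$ into the limit of the rescaled conservation identity produces $H(t) {=} \|\overline{f}_0\| {-} \beta t {+} \delta\int_0^t \phi(H(u))\,\diff u$, and separating variables gives~\eqref{NormLim}. Uniqueness of $H$---hence convergence of the full sequence rather than only of subsequences---follows from Picard--Lindel\"of applied to the smooth vector field $\delta\phi(\cdot){-}\beta$ on $(0,1)$.
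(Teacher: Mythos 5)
Your overall architecture is the same as the paper's (augmented occupation measure containing the slow variable, the conservation law for $\|F_N\|{-}Z_N$, Poisson identification of the conditional law of $z$, then closing an autonomous ODE for $H$), but there is a genuine gap at the step you yourself call the main obstacle. From the limit equation
\[
\int \sum_{j=1}^J \partial_j h(f)\,b_j(f)\,\mu_u^{(1)}(\diff f)=0,\qquad
b_j(f)\steq{def}\eta_j(c_j{-}f_j)-\lambda_j f_j\,\frac{\croc{\eta,c{-}f}}{\croc{\lambda,f}},
\]
valid for all $h{\in}{\cal C}^1$, you conclude that $b_j(f){=}0$ holds $\mu_u^{(1)}$-a.e.\ and hence that $f_j{=}c_j\rho_j/(\rho_j{+}y)$. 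That implication is false for a general vector field: the identity only says that $\mu_u^{(1)}$ is an invariant measure of the flow $\dot f{=}b(f)$ (e.g.\ the uniform measure on a periodic orbit satisfies it with $b{\not\equiv}0$), so it does not by itself force the drift to vanish on the support. This is exactly the point where the paper invests its key Proposition~\ref{TechProp}: testing with functions of a single coordinate yields only the conditional identity $\lambda_jU_j\,\E\bigl(\croc{\eta,c{-}U}/\croc{\lambda,U}\mid U_j\bigr)=\eta_j(c_j{-}U_j)$, and a quadratic-form argument (showing $\E\bigl(\sum_{j,k}(A_j{-}A_k)^2B_jB_k/\sum_kB_k\bigr)=0$ with $A_j{=}\eta_j(c_j{-}U_j)/(\lambda_jU_j)$, $B_j{=}\lambda_jU_j$) forces the $A_j$ to coincide almost surely, which is what places the support on the curve~\eqref{LimFi}. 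Your proof needs either this argument or a substitute, for instance the observation that $\sum_j b_j{\equiv}0$ (so the flow preserves $\|f\|$) together with a strict Lyapunov function such as $V(f){=}\sum_j(\eta_j/\lambda_j)(f_j{-}c_j\log f_j)$, for which $\croc{\nabla V,b}\le 0$ by Cauchy--Schwarz with equality exactly on the curve; as written, the conclusion is asserted rather than proved.

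A second, more repairable, omission concerns localization. The $M/M/\infty$ domination of $(Z_N(Nt))$ that underlies your tightness claims, the vanishing of the martingales, and the uniform integrability needed to pass to the limit in $\delta\int_0^t Z_N(Ns)\diff s$ requires the per-agent death rate $\delta{+}\croc{\lambda,F_N}$ to be of order $N$, i.e.\ it is only valid while $\|F_N(Nt)\|{\ge}aN$. A global bound ``$\E(Z_N(s))$ uniformly bounded for $s{\le}NT$'' is not available a priori: if $\|F_N\|$ became $o(N)$ the free agents would accumulate. The paper handles this by working with the stopped occupation measure up to $\tau_N(a_0)$, proving in Lemma~\ref{lem1Op} that $\tau_N(a)\ge\ell(a)$ with high probability (since at most ${\approx}\beta N t$ agents are created by time $Nt$), and then extending to all of $\R_+$ a posteriori using $H(t)\ge w>0$. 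Your argument should incorporate this stopping-time device, or an equivalent a priori lower bound on $\|F_N(Nt)\|/N$.
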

Note that we have a convergence in distribution $(\|F_N(Nt)\|/N)$, but not of the processes $(F_{j,N}(Nt)/N)$, $j{=}1$,\ldots, $J$.. The convergence in distribution for this $J$-dimensional process is weaker, it is expressed through the sequence of occupation measures $(\Lambda_N)$. See~\citet{Dawson} for general definitions and results for the convergence in distribution of random measures.

It is not difficult to see that, under Condition~\eqref{InitOp} for the initial conditions, one cannot have a convergence in distribution of $(F_{j,N}(Nt)/N)$. Otherwise, its limit would be $(f_j(t))$, but this would imply that the asymptotic initial point $(\overline{f}_{0,j})$ would satisfy the relation
\[
\overline{f}_{0,j}=\frac{\rho_j}{\rho_j{+}\phi(\overline{f}_0)}c_j,
\]
which is not the case a priori. Asymptotically the vector $(F_{j,N}(Nt)/N)$ lives in a one dimensional curve of the state space, this is due to the fast processes which lead to a kind of state space collapse. See Propositions~\ref{PropPi} and~\ref{TechProp}. 

\begin{corollary}[Equilibrium]\label{corotheo}
Under the assumptions, and with the notations of,  Theorem~\eqref{TheoLLN}, for $1{\le}j{\le}J$,
  \[
\lim_{t\to+\infty} H(t)=H_\infty=\sum_{j=1}^J\frac{\rho_j}{\rho_j{+}\beta/\delta}c_j,
\]
\end{corollary}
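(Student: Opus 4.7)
The plan is to convert Relation~\eqref{NormLim} into an autonomous scalar ODE for $(H(t))$, identify its unique equilibrium, and then argue convergence. No further stochastic argument is required: the corollary is purely a statement about the deterministic limit process produced by Theorem~\ref{TheoLLN}.

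First I would establish that the function $\phi$ of Definition~\ref{rho} is a smooth, strictly decreasing bijection of $(0,1)$ onto $(0,+\infty)$. For fixed $y$, the map $p\mapsto\sum_{j=1}^J\rho_jc_j/(\rho_j+p)$ is continuous and strictly decreases from $1$ at $p=0^+$ down to $0$ at $p=+\infty$, so Relation~\eqref{phi} has a unique positive solution $\phi(y)$, and the implicit function theorem yields smoothness with $\phi'<0$. Differentiating Relation~\eqref{NormLim} in $t$ then gives
\[
H'(t)=\delta\phi(H(t))-\beta.
\]
Its unique zero is the root of $\phi(h)=\beta/\delta=\rho_0$, and substituting $\phi(H_\infty)=\rho_0$ back into Relation~\eqref{phi} with $y=H_\infty$ returns exactly
\[
H_\infty=\sum_{j=1}^J\frac{\rho_j}{\rho_j+\beta/\delta}c_j,
\]
which is the claimed value.

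To conclude I would argue that $H(t)\to H_\infty$. Strict monotonicity of $\phi$ implies that the drift $h\mapsto\delta\phi(h)-\beta$ has the sign of $H_\infty-h$; hence $(H(t))$ is monotone on either side of $H_\infty$, cannot cross this fixed point, and stays inside $(0,1)$. A bounded monotone trajectory converges, and by continuity of the drift its limit must be a zero of $\delta\phi(\cdot){-}\beta$, i.e.\ $H_\infty$. Equivalently, since $\phi$ is smooth with $\phi'(H_\infty)\neq 0$, the integrand $1/(\delta\phi(u){-}\beta)$ in Relation~\eqref{NormLim} has a simple-pole singularity at $u=H_\infty$, so the integral can accumulate to $+\infty$ only as $H(t)\to H_\infty$.

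The only mildly delicate point is the verification that $\phi$ is genuinely smooth and strictly monotonic on the full interval $(0,1)$, together with the non-vanishing of $\phi'(H_\infty)$ needed for the divergence argument. Both are routine from Relation~\eqref{phi} and the implicit function theorem; once they are in hand the identification of the fixed point and the convergence of $(H(t))$ are immediate.
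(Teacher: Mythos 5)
Your argument is correct and is essentially the paper's: the paper derives the ODE form $H'(t)=\delta\phi(H(t))-\beta$ (Relation~\eqref{ChODE} in the proof of Theorem~\ref{TheoLLN}) and treats the corollary as an immediate consequence, exactly as you do by noting that the unique zero of the drift satisfies $\phi(H_\infty)=\beta/\delta=\rho_0$, which plugged into Relation~\eqref{phi} gives the stated value. Your added monotonicity/convergence discussion only makes explicit what the paper leaves as ``easily seen.''
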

The quantity $H_\infty$ is the asymptotic fraction of free particles.
The proof of this theorem is achieved in several steps. The general picture is that nevertheless a kind of averaging principle holds, the slow process being $(\overline{F}_N(t))$ and the ``fast'' process is $(Z_N(Nt))$. The general method in this domain is described in~\citet{Kurtz}, see also~\citet{Papanicolaou} and~\citet{Freidlin}. It turns out that, due to the very fast timescale mentioned above, the slow process has to be included in the definition of the occupation measure, see Definition~\ref{ScalDef}.  This situation leads to technical difficulties, to identify the invariant measures of fast processes in particular.

\begin{definition}\label{deftau}
For $a{>}0$, the stopping time $\tau_N(a)$ is defined by 
\begin{equation}\label{tau}
\tau_N(a)\steq{def}\inf\left\{t{>}0: \|F_N(Nt)\|{=}\sum_{j=1}^J F_{j,N}(Nt) \le a N\right\}.
\end{equation}
\end{definition}
To prove convenient tightness properties of a scaled version of $(X_N(t))$,  we first derive some technical results. 
In a first step, we fix some $a_0{\in}(0,\|\overline{f}_0\|)$ and  we will work with a ``stopped'' {\em occupation measure}, it is the random measure on $H$ defined by
\begin{equation}\label{OccMeasDef}
  \croc{\Lambda^0_N,g}\steq{def} \int_{0}^{\tau_N(a_0)}g\left(s,\left(\frac{F_{j,N}(Ns)}{N}\right),Z_N(Ns)\right)\diff s,
\end{equation}
for  a continuous function $g$ with compact support on $H$.
The motivation of the stopped occupation is due to a technical argument used for the identification of the invariant distributions of fast processes. See Proposition~\ref{TechProp}. 
\begin{lemma}\label{lem1Op}
If $\|\overline{f}_0\|{>}0$, for $a{\in}(0,\|\overline{f}_0\|)$, we have
  \[
  \lim_{N\to+\infty} \P\left(\tau_N(a)< \ell(a)\right)=0,
  \]
with $\ell(a)\steq{def}{\|\overline{f}_0\|{-}a}/{(2\beta)}$, and the relation
\[
\lim_{N\to+\infty} \left(\frac{Z_N(Nt)}{\sqrt{N}}\right)=(0)
\]
 holds for the convergence in distribution. 
\end{lemma}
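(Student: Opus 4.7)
The plan is to exploit two observations. First, on $[0,\tau_N(a)]$ there are at least $aN$ free particles available for pairing, so the free-agent process $(Z_N)$ is subjected to a service rate of order $N$ and stays small. Second, from the SDEs~\eqref{SDEd1}--\eqref{SDEd2}, pairing and splitting events change $\|F_N\|$ and $Z_N$ by the same amount, so the only drifts in the decomposition of $\|F_N(t)\|-Z_N(t)$ come from the agent birth and death terms:
\[
\|F_N(t)\|-Z_N(t) = \|F_N(0)\|-Z_N(0) - \beta t + \delta\int_0^t Z_N(s)\diff s - M_z^*(t),
\]
where $(M_z^*(t))$ is a martingale with previsible bracket $\croc{M_z^*}(t)=\beta t + \delta\int_0^t Z_N(s)\diff s$.

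First I would construct, in the spirit of Lemma~\ref{LemCl1}, a pathwise coupling dominating $(Z_N(s), s{\le}N\tau_N(a))$ by a process $(L(s))$ such that $(L(Nt))$ is an $M/M/\infty$ queue with fixed input rate $\overline{\eta}$ and per-customer service rate $\underline{\lambda}a$. This is legitimate since the upward rate of $Z_N$ is at most $\beta+\overline{\eta}N$ while the per-customer downward rate is at least $\delta+\underline{\lambda}aN$ on $[0,\tau_N(a)]$. This reference queue is ergodic with Poisson$(\overline{\eta}/(\underline{\lambda}a))$ invariant distribution (see Section~\ref{MMI}), so $(L(Nt)/\sqrt{N})$ converges to $0$ uniformly on compact time sets, and the same holds for $(Z_N(Nt)/\sqrt{N})$ restricted to $[0,\tau_N(a)]$. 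Doob's inequality applied with $\croc{M_z^*}(Nt\wedge N\tau_N(a))=O(N)$ in probability then gives $\sup_{s\le Nt\wedge N\tau_N(a)}|M_z^*(s)|/N = O_{\P}(1/\sqrt{N})$.

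To establish the first assertion, suppose for contradiction that $\tau_N(a)\le\ell(a)$. Evaluating the displayed identity at $t=N\tau_N(a)$, dividing by $N$, and dropping the non-negative terms $Z_N(N\tau_N(a))/N$ and $\delta\int_0^{N\tau_N(a)} Z_N(s)\diff s/N$, one obtains
\[
\frac{\|F_N(N\tau_N(a))\|}{N} \;\ge\; \frac{\|F_N(0)\|}{N}-\frac{Z_N(0)}{N} - \beta\tau_N(a) - \frac{M_z^*(N\tau_N(a))}{N}.
\]
The left-hand side is at most $a$ by definition of $\tau_N(a)$; the initial terms converge to $\|\overline{f}_0\|$ and $0$ by~\eqref{InitOp}; the martingale contribution vanishes in probability by the preceding step. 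Hence $\beta\tau_N(a)\ge\|\overline{f}_0\|-a-o_{\P}(1)$, which, for $N$ large, contradicts $\tau_N(a)\le(\|\overline{f}_0\|-a)/(2\beta)$ with probability tending to $1$. The second assertion is then immediate from the $M/M/\infty$ domination combined with the fact that $\tau_N(a)$ is, with high probability, bounded below by a positive constant.

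The main obstacle is the slight circularity of the argument: the $M/M/\infty$ upper bound on $(Z_N)$ is valid only on $[0,\tau_N(a)]$, but the lower bound on $\tau_N(a)$ itself relies on this control. The cleanest way to handle this is to carry out every estimate at the stopped time $Nt\wedge N\tau_N(a)$ and only at the very end undo the localisation using the probability estimate $\P(\tau_N(a)\le\ell(a))\to 0$ just obtained.
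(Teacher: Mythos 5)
Your argument is correct in substance, but the first assertion is proved by a genuinely different route than the paper's. For $\P(\tau_N(a)<\ell(a))\to 0$ the paper uses a purely pathwise counting argument: a paired agent cannot die, each paired particle occupies a distinct agent, so $N{-}\|F_N(Nt)\|$ is bounded by the initial number of agents plus the number of creations, i.e. $\{\tau_N(a)\le y\}\subset\{{\cal P}_z^+((0,\beta){\times}(0,yN])\ge \|F_N(0)\|{-}\lceil aN\rceil{-}z_0\}$, and the law of large numbers for the Poisson process of creations finishes the proof --- no control of $(Z_N)$ and no martingale estimate is needed for this part. You instead use the semimartingale decomposition of $\|F_N(t)\|{-}Z_N(t)$ (essentially Relation~\eqref{eqZb} for $\widetilde Z_N$), drop the favourable terms $Z_N(T)$ and $\delta\int_0^T Z_N$, and control $M_z^*$ through its bracket; this forces you to invoke the $M/M/\infty$ domination already for the first assertion and to manage the localisation at $\tau_N(a)$, which you do correctly. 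Both work; the paper's version is more elementary and decouples the two claims (note that in your decomposition $-\beta T{+}\delta\int_0^T Z_N{-}M_z^*(T)$ is exactly $-(\text{creations}){+}(\text{deaths})\ge -{\cal P}_z^+((0,\beta){\times}(0,T])$, so your inequality reduces to the paper's after one more line, without any bracket computation).

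Two points in your write-up need tightening. First, the coupling bookkeeping: as stated, ``$(L(Nt))$ is an $M/M/\infty$ queue with input rate $\overline\eta$'' dominating $(Z_N(s))$ is off by a factor of $N$ in the clock. The correct statement (as in the paper) is $Z_N(Nt)\le L_0(N^2t)$ for $t<\tau_N(a)$, where $(L_0(u))$ is an $M/M/\infty$ queue with input rate $2\overline\eta$ (the doubling absorbs the constant $\beta$ in the upward rate $\beta{+}\sum_j\eta_j(C_{j,N}{-}F_{j,N})\le\beta{+}\overline\eta N$) and per-customer service rate $\underline\lambda a$. Second, the step ``the queue is ergodic with Poisson invariant distribution, so $(L(Nt)/\sqrt N)$ converges to $0$ uniformly on compacts'' is not a valid inference as written: you are taking a supremum over a time horizon of length $N^2T$, and stationarity alone does not control it. What is needed, and what the paper uses, is the hitting-time asymptotic of Proposition~\ref{HitMMI}: the time for the $M/M/\infty$ queue to reach level $\lceil\eps\sqrt N\rceil$ grows like $(\mu/\gamma)^{\lceil\eps\sqrt N\rceil}(\lceil\eps\sqrt N\rceil{-}1)!$, which beats $N^2T$, whence $\P(\sup_{u\le N^2T}L_0(u)\ge\eps\sqrt N)\to 0$. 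With these two repairs your bound $\E\croc{M_z^*}(N(t\wedge\tau_N(a)))=O(N)$ is justified ($\E\int_0^{N^2t}L_0(v)\diff v/N\le Nt\sup_u\E(L_0(u))$), and the contradiction argument and the second assertion go through as you describe.
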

\begin{proof}
For $t{>}0$ and $N$ sufficiently large, on the event $\{\|F_N(t)\|{<}a N \}$ there are at least $(\|F_N(0)\|{-}\lceil aN\rceil{-}z_0)$ new agents created up to time $Nt$. Consequently for  $y{>}0$, 
  \[
  \{\tau_N(a)\leq y\}\subset \left\{{\cal P}_z^+((0,\beta){\times}(0,yN)\}{\ge}\|F_N(0)\|{-}\lceil a N\rceil{-}z_0\right\},
  \]
  by Relation~\eqref{SDEd2}. The first assertion follows from the law of large numbers for Poisson processes. 

We now show that there exists a coupling of $(Z_N(t))$ with  $(L_0(t))$, the state process  of an  $M/M/\infty$ queue such that  the relation $Z_N(t){\le}(L_0(N^2 t))$ holds for all $t{<}\tau_N(a)$. See Section~\ref{MMI} of the appendix on the $M/M/\infty$ queue.

In state $z{\in}\N$, the jump rates of the process $(Z_N(t))$ in state $((f_j),z)$ at time $t$ are given by 
  \[
 \begin{cases}
{+}1,& \displaystyle \beta{+}\sum_{j=1}^J \eta_j\left(C_j^N{-}f_j\right),\\
{-}1,& \displaystyle \delta z{+}\sum_{j=1}^J \lambda_j f_jz. 
  \end{cases}
 \]
 Let $(L_0(t))$ the  process of the $M/M/\infty$ queue   with input rate $2\overline{\eta}$ and service rate $\delta{+}a\underline{\lambda}$,  with Definition~\eqref{overeta},   with $L_0(0){=}Z_N(0){=}z_0$. We take   $N$ sufficiently large so that $\beta{\le}\overline{\eta}N$. By comparing the jump rates, one can construct a version of $(Z_N(t),L_0(t))$ such that the relation
\begin{equation}\label{ineq1}
 Z_N(Nt)\le L_0(N^2t)
\end{equation}
 holds for all $t{<} \tau_N(a)$.  For $\eps{>}0$, let $T_N(\eps){=}\inf\left\{t: L_0(t){\ge}\eps \sqrt{N}\right\}$. 
Proposition~\ref{HitMMI} of the appendix shows that the sequence of random variables
\[
\left(\left(\frac{2\overline{\eta}}{a\underline{\lambda}}\right)^{\lceil \eps \sqrt{N}\rceil}\frac{T_N(\eps)}{(\lceil \eps \sqrt{N}\rceil{-}1)!}\right)
\]
converges in distribution to an exponential random variable.  In particular, for any $t{>}0$,
\[
\lim_{N\to+\infty} \P(T_N(\eps){\le}N^2 t)=0.
\]
The proof of the lemma  follows from  the relation 
\[
\P\left(\sup_{s{\le}t}\frac{Z_N(N(s{\wedge}\tau_N(a_0)))}{\sqrt{N}}>\eps\right)\le  \P\left(\sup_{s{\le}t}\frac{L_0(N^2s)}{\sqrt{N}}>\eps\right){=}\P\left(T_N{\le}N^2 t\right).
\]
\end{proof}

\begin{proposition}\label{PropLa}
The sequence of random measures $(\Lambda^0_N)$  defined by Relation~\eqref{OccMeasDef} is tight and any limiting point $\Lambda^0_\infty$ can be expressed as
\begin{equation}\label{SAPpi}
\croc{\Lambda^0_\infty,f}=\int_{\R_+{\times}[0,1]^J{\times}\N} f\left(s,x,p\right)\pi_s(\diff x,\diff p)\diff s,
\end{equation}
for any function $f{\in}{\cal C}_c(\R_+{\times}[0,1]^J{\times}\N)$, where  $(\pi_s)$ is an optional process with values in the space of  probability distributions on $[0,1]^J{\times}\N$.

If $(\Lambda^0_{N_k})$ is a sub-sequence of $(\Lambda^0_N)$ converging to $\Lambda^0_\infty$, then with the convention of  Relation~\eqref{ConvInt}, for the convergence in distribution of processes, 
\begin{equation}\label{SAPpicv}
  \lim_{k\to+\infty} \left(\int g(x,z)z\Lambda^0_{N_k}([0,t],\diff x,\diff z)\right)=
  \left(\int g(x,z)z \Lambda^0_{\infty}([0,t],\diff x,\diff z)\right),
\end{equation}
for  any  bounded continuous function  $g$ on $[0,1]^J{\times}\N)$ and the limit is integrable for all $t{\ge}0$. 
\end{proposition}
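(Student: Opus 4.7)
The plan is to combine the $M/M/\infty$ stochastic domination from Lemma~\ref{lem1Op} with the standard framework for occupation measures of~\citet{Kurtz}. The role of the cutoff at $\tau_N(a_0)$ in~\eqref{OccMeasDef} is precisely to make the domination uniform in $N$, since on $\{t<\tau_N(a_0)\}$ the service rate of free agents in the $M/M/\infty$ comparison is at least $\delta+a_0\underline{\lambda}$, which is strictly positive.

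First I would derive uniform moment bounds on $Z_N(N\cdot)$. Since the $M/M/\infty$ process $(L_0(t))$ has all moments bounded uniformly in $t$ (Section~\ref{MMI}), the coupling $Z_N(Nt)\le L_0(N^2 t)$ for $t<\tau_N(a_0)$, combined with a change of variable, yields
\[
\sup_N \E\left(\int_0^{T\wedge \tau_N(a_0)} Z_N(Ns)^p \diff s\right) \leq C_p T,\qquad p\geq 1,\ T>0.
\]
With $p=1$, Markov's inequality gives, uniformly in $N$,
\[
\E\bigl(\Lambda^0_N([0,T]{\times}[0,1]^J{\times}[K,+\infty))\bigr) \leq \frac{C_1 T}{K}.
\]
Since $[0,1]^J$ is compact, this is exactly the criterion of Lemma~1.3 of~\citet{Kurtz}, which then yields both tightness of $(\Lambda^0_N)$ and the disintegration~\eqref{SAPpi} for any subsequential limit $\Lambda^0_\infty$, with $(\pi_s)$ an optional process of probability measures on $[0,1]^J{\times}\N$.

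For~\eqref{SAPpicv}, the obstacle is that the test function $z$ is unbounded, so weak convergence of $\Lambda^0_{N_k}$ alone is not sufficient. I would truncate: the bounded continuous function $g(x,z)(z\wedge K)$ passes to the limit by weak convergence, while the $p=2$ moment bound gives
\[
\E\left(\int |g(x,z)|\,z\,\mathbf{1}_{\{z\geq K\}}\Lambda^0_{N_k}([0,t],\diff x,\diff z)\right) \leq \frac{\|g\|_\infty C_2 t}{K},
\]
uniformly in $k$. Letting $K\to+\infty$ yields~\eqref{SAPpicv}, and Fatou's lemma applied to the same estimate gives the integrability of the limit. The main difficulty lies in this last step: the passage to the limit under an unbounded test function is not automatic from weak convergence of random measures and genuinely requires the second-moment control on $Z_N$, which in turn is the whole reason for stopping the occupation measure at $\tau_N(a_0)$ rather than integrating over all of $\R_+$. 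The identification of $\pi_s$ as a Poisson kernel, needed for Theorem~\ref{TheoLLN}, is a separate matter (Proposition~\ref{pirep}) and relies on an averaging argument for the fast process $(Z_N(Ns))$ which is not addressed here.
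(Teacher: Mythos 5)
Your argument for the tightness of $(\Lambda^0_N)$ and the disintegration~\eqref{SAPpi} is essentially the paper's: the same coupling $Z_N(Ns)\le L_0(N^2s)$ on $\{s<\tau_N(a_0)\}$, the same uniform control of the mass escaping to $[K,+\infty)$ (the paper bounds the tail probability of $L_0$ directly rather than via a first-moment Markov inequality, which is immaterial), and the same appeal to Lemmas~1.3 and~1.4 of Kurtz. Your truncation argument for the unbounded test function $z$, with the uniform second-moment tail bound $\|g\|_\infty C_2 t/K$ and Fatou for integrability of the limit, is also exactly what the paper does, and you correctly identify the stopping at $\tau_N(a_0)$ as the reason this control is available.

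There is, however, one step you omit that the statement genuinely requires. Relation~\eqref{SAPpicv} is asserted \emph{for the convergence in distribution of processes}, i.e.\ in the topology of uniform convergence on compacts, whereas your truncation argument only delivers convergence of the random variables $\int g(x,z)z\,\Lambda^0_{N_k}([0,t],\diff x,\diff z)$ at each fixed $t$ (and hence of finite-dimensional marginals). To upgrade this to process convergence you need tightness of the sequence of real-valued processes $t\mapsto\int g(x,z)z\,\Lambda^0_{N_k}([0,t],\diff x,\diff z)$. The paper obtains this from a Cauchy--Schwarz estimate on increments,
\[
\E\left(\left(\int g(x,z)z\,\Lambda^0_{N}([s,t],\diff x,\diff z)\right)^2\right)\le (t{-}s)^2\,\|g\|_\infty\sup_{u\ge 0}\E\left(L_0(u)^2\right),
\]
followed by the Kolmogorov--\v{C}entsov criterion, which also shows that any limit point is a continuous process. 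The ingredients for this step are already in your proposal (it is the same $p{=}2$ moment bound on $L_0$), so the gap is one of omission rather than of method, but as written your proof does not establish the mode of convergence claimed in the proposition.
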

\begin{proof}
For $K{>}0$, with Relation~\eqref{ineq1} in the proof of  Lemma~\ref{lem1Op}, and with the same notations,  the relation
\[
\int_0^t\ind{Z_N(Ns)\ge K}\diff s \le \int_0^t\P(L_0(N^2s)\ge K)\diff s
\]
 holds  on the event $\{\tau_N(a_0){\ge}t\}$.  Consequently, for  $t{<}\ell(a_0)$,
\begin{multline*}
  \E\left( \Lambda^0_N([0,t]{\times}[0,1]^J{\times}[K,{+}\infty])\right)=
  \E\left(\int_0^{t{\wedge}\tau_N(a_0)}\ind{Z_N(Ns)\ge K}\diff s\right)  \\
  \le\E\left(\ind{\tau_N(a_0)>t}\int_0^{t}\ind{L_0(N^2s)\ge K}\diff s\right)
  \le \frac{1}{N^2}\int_0^{N^2t}\P(L_0(s)\ge K)\diff s.
\end{multline*}
Since the Markov process $(L_0(t))$ converges in distribution to a Poisson distribution with parameter~$2\overline{\eta}/(\underline{\lambda} a_0)$, see Section~\ref{MMI} of the appendix.  With Lemma~\ref{lem1Op}, we obtain the relation
\[
\limsup_{N\to+\infty}   \E\left( \Lambda^0_N([0,t]{\times}[0,1]^J{\times}[K,{+}\infty])\right)\le
\P({\cal N}_1(0,2\overline{\eta}/(\underline{\lambda} a_0)){\ge}K) \,t,
\]
where ${\cal N}_1$ is a   Poisson process on $\R_+$ with rate $1$. In particular, one can choose $K$ sufficiently large such that
\[
\sup_N \E\left( \Lambda^0_N([0,t]{\times}[0,1]^J{\times}[K,{+}\infty])\right)
\]
is arbitrarily small. Lemma~1.3 of~\citet{Kurtz} shows that the sequence $(\Lambda^0_N)$ is  tight, and Lemma~1.4 of the same reference gives the representation~\eqref{SAPpi}.

For the second part of the proposition, Relation~\eqref{ineq1} in the proof of Lemma~\ref{lem1Op} and the Cauchy-Schwartz' Inequality give, for $s{\le}t$, 
\begin{multline*}
\E\left(\left(\int g(x,z)z\Lambda^0_{N}([s,t],\diff x,\diff z)\right)^2\right)
=\E\left(\left(\int_{s{\wedge}\tau_N(a_0)}^{t{\wedge}\tau_N(a_0)}\hspace{-3mm}g\left(\overline{X}_N(s)\right)Z_N(Ns)\diff s\right)^2\right)\\
\le \|g\|_\infty \E\left(\left(\int_{s{\wedge}\tau_N(a_0)}^{t{\wedge}\tau_N(a_0)}L_0(N^2s)\diff s\right)^2\right)
\le \|g\|_\infty \E\left(\left(\int_{s}^{t}L_0(N^2s)\diff s\right)^2\right)\\
\le (t{-}s) \|g\|_\infty\int_{s}^{t}\E\left(L_0(N^2s)^2\right)\diff s\le (t{-}s)^2 \|g\|_\infty \sup_{u{\ge}0}\E\left(L_0(u)^2\right).
\end{multline*}
 Kolmogorov-\v{C}entsov's criterion,  implies that the sequence of processes
\[
\left(\int g(x,z)z\,\Lambda^0_{N}([0,t],\diff x,\diff z)\right)
\]
is tight for the convergence in distribution and any of its limiting points is a continuous process.
See Theorem~2.8 and Problem~4.11, page~64 of~\citet{Karatzas} for example.

For $t{>}0$ and $C{>}0$, for the convergence in distribution we have
\[
\lim_{k\to+\infty}\int g(x,z)\left(z{\wedge}C\right)\,\Lambda^0_{N_k}([0,t],\diff x,\diff z)=\int g(x,z)\left(z{\wedge}C\right)\,\Lambda^0_{\infty}([0,t],\diff x,\diff z).
\]
Using again  Relation~\eqref{ineq1}, with the same argument as before, 
\begin{multline*}
\E\left(\int g(x,z)\left(z{\wedge}C\right)\,\Lambda^0_{N_k}([0,t],\diff x,\diff z)\right)\\
\le \|g\|_\infty\int_0^t \E\left(L_0(N^2u){\wedge}C\right)\diff u
\le t\|g\|_{\infty}\sup_{u{\ge}0} \E\left(L_0(u)^2\right)<{+}\infty,
\end{multline*}
by letting first  $k$  and then $C$ go to infinity, we obtain the relation
\[
\E\left(\int g(x,z) z\,\Lambda^0_{\infty}([0,t],\diff x,\diff z) \right) <{+}\infty,
\]
for all $t{\ge}0$. Similarly, we have
\begin{multline*}
\E\left(\int g(x,z)z\ind{z{\ge}C}\,\Lambda^0_{N_k}([0,t],\diff x,\diff z)\right)\\
\le \|g\|_{\infty}\int_0^t \E\left(L_0(N_k^2u)\ind{L_0(N_k^2u){\ge}C}\right)\diff u
\le \frac{t}{C}\|g\|_{\infty}\sup_{u{\ge}0} \E\left(L_0(u)^2\right),
\end{multline*}
and  therefore the convergence in distribution
\[
\lim_{k\to+\infty}\int g(x,z) z\,\Lambda^0_{N_k}([0,t],\diff x,\diff z)=\int g(x,z)z\,\Lambda^0_{\infty}([0,t],\diff x,\diff z),
\]
for $t{>}0$. For $p{\ge}1$ and $0{\le}t_1{\le}\cdots{\le}t_p$, this convergence also clearly holds for  finite marginals at $(t_i)$. The proposition is proved. 
\end{proof}
If $f$ is a non-negative Borelian function on $\R_+^J{\times}\N$, the SDEs~\eqref{SDEd1} and~\eqref{SDEd2} give directly the relations
\begin{align}
f\left(\overline{F}_N(t),Z_N(Nt)\right)&=f\left(\overline{F}_{N}(0),Z_N(0)\right){+}M_{f,N}(t)\label{SDEf}\\
+\sum_{j=1}^J \lambda_j \int_0^{t}& N\Delta_{-e_j/N,-1}(f)\left(\overline{F}_{N}(s),Z_N(Ns)\right) F_{j,N}(Ns)Z_N(Ns)\diff s\notag\\
  {+}\sum_{j=1}^J \eta_j \int_0^{t} &N\Delta_{e_j/N,1}(f)\left(\overline{F}_N(s),Z_N(Ns)\right)(C_j^N{-}F_{j,N}(Ns))\diff s\notag\\
+\beta N\int_0^{t} &\Delta_{0,1}(f)\left(\overline{F}_N(s),Z_N(Ns)\right)\diff s\notag\\
+\delta N \int_0^{t} &\Delta_{0,-1}(f)\left(\overline{F}_N(s),Z_N(Ns)\right)Z_N(Ns)\diff s,\notag
\end{align}
with the notation, for $x$, $u{\in}\R_+^J$ and $y$, $b{\in}\N$,
\[
\Delta_{u,v}(f)(x,y)\steq{def} f(x{+}u,y{+}v){-}f(x,y),
\]
and,  for $1{\le}j{\le}J$,  $e_j$ is the $j$th unit vector of $\R^J$.

The process  $(M_{f,N}(t))$ is a square integrable martingale whose previsible increasing process is given by
\begin{align}
  &\croc{M_{f,N}}(t)  =\beta N\int_0^{t}  \Delta_{0,1}(f)\left(\overline{F}_N(s),Z_N(Ns)\right)^2\diff s\label{CrocMf} \\
    &\qquad{+}\delta N\int_0^{t}\Delta_{0,-1}(f)\left(\overline{F}_N(s),Z_N(Ns)\right)^2Z_N(Ns)\diff s\notag\\ 
&\qquad{+}\sum_{j=1}^J \lambda_j  \int_0^{t} N\Delta_{-e_j/N,-1}(f)\left(\overline{F}_N(s),Z_N(Ns)\right)^2F_{j,N}(Ns)Z_N(Ns)\diff s\notag\\
    &\qquad{+}\sum_{j=1}^J \eta_j \int_0^{t} N\Delta_{e_j/N,1}(f)\left(\overline{F}_N(s),Z_N(Ns)\right)^2 (C_j^N{-}F_{j,N}(Ns))\diff s.\notag 
\end{align}

If we divide by $N$ Relation~\eqref{SDEf} taken at $Nt$, we get 
\begin{align}
\frac{1}{N}&\left(\rule{0mm}{4mm}f\left(\overline{F}_N(t),Z_N(Nt)\right)\right.{-}\left.\rule{0mm}{4mm}f\left(\overline{F}_N(0),Z_N(0)\right)\right){=}\frac{1}{N}M_{f,N}(t)\label{SDEf2}\\
&{+}\sum_{j=1}^J \lambda_j \int_0^{t} N\Delta_{-e_j/N,-1}(f)\left(\overline{F}_N(s),Z_N(Ns)\right) \overline{F}_{j,N}(s)Z_N(Ns)\diff s\notag\\
&  {+}\sum_{j=1}^J \eta_j \int_0^{t} N\Delta_{e_j/N,1}(f)\left(\overline{F}_N(s),Z_N(Ns)\right)\left(\frac{C_j^N}{N}{-}\overline{F}_{j,N}(s)\right)\diff s\notag\\
&{+}\beta \int_0^{t} \Delta_{0,1}(f)\left(\overline{F}_N(s),Z_N(Ns)\right)\diff s\notag\\
     &{+}\delta\int_0^{t} \Delta_{0,-1}(f)\left(\overline{F}_N(s),Z_N(s)\right)Z_N(Ns)\diff s. \notag
\end{align}
\begin{lemma}\label{LemMart}
  If $f$ is a  bounded ${\cal C}_1$ function with compact support on $\R_+^J{\times}\N$, then the sequence of  martingales $(M_{f,N}(t)/N, t{<}\ell(a_0))$ of Relation~\eqref{SDEf2} converges in distribution to $0$. 
\end{lemma}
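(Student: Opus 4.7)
The plan is to apply Doob's $L^2$ maximal inequality to the martingale $(M_{f,N}(t)/N)$ stopped at $\tau_N(a_0)$: for any $\eps{>}0$ and $T{<}\ell(a_0)$,
\[
\P\Bigl(\sup_{t\le T\wedge\tau_N(a_0)} |M_{f,N}(t)|/N \ge \eps\Bigr) \le \frac{4}{\eps^2 N^2}\,\E\bigl[\croc{M_{f,N}}(T\wedge\tau_N(a_0))\bigr],
\]
and to combine this with Lemma~\ref{lem1Op}, which gives $\tau_N(a_0){\ge}\ell(a_0)$ with probability tending to $1$. The problem reduces to showing $\E\bigl[\croc{M_{f,N}}(T\wedge\tau_N(a_0))\bigr]{=}o(N^2)$ for arbitrary $T{<}\ell(a_0)$.

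I examine the four contributions in Relation~\eqref{CrocMf} separately. The agent-birth term $\beta N \int \Delta_{0,1}(f)^2\diff s$ is bounded by $4\beta \|f\|_\infty^2 NT{=}O(N)$. The agent-death term $\delta N \int \Delta_{0,-1}(f)^2 Z_N(Ns)\diff s$ is bounded by $4\delta \|f\|_\infty^2 N\,\E\bigl[\int_0^T Z_N(Ns)\diff s\bigr]$; by the coupling $Z_N(N\cdot){\le}L_0(N^2\cdot)$ of Lemma~\ref{lem1Op} together with $\sup_u \E[L_0(u)]{<}{+}\infty$ (Section~\ref{MMI}), this is also $O(N)$. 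Both of these vanish after division by $N^2$.

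For the pair-formation and pair-splitting terms, I use the $C^1$ regularity of $f$ in the continuous variable to write
\[
\Delta_{\pm e_j/N,\pm 1}(f)(x,y) = \Delta_{0,\pm 1}(f)(x,y) \pm \frac{1}{N}\,\partial_{x_j}f(\tilde{x},y{\pm}1),
\]
for some intermediate $\tilde{x}$. Squaring and using $(a{+}b)^2{\le}2a^2{+}2b^2$, the $1/N^2$ gradient piece, multiplied by $N\cdot F_{j,N}(Ns)Z_N(Ns)$ (respectively $N\cdot(C_j^N{-}F_{j,N}(Ns))$), yields $O(1)$ using $F_{j,N}Z_N{\le}NL_0(N^2\cdot)$ and $\E[\int L_0]{<}{+}\infty$. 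The $\Delta_{0,\pm 1}(f)^2$ piece is nonzero only on the event $\{Z_N(Ns){\le}Y_0{+}1\}$, by the compact support of $f$ in its discrete variable; combined with the second-moment control $\sup_u \E[L_0(u)^2]{<}{+}\infty$ from Section~\ref{MMI} and the Kolmogorov--\v{C}entsov-type second-moment estimate used in Proposition~\ref{PropLa}, this contribution can be closed at $o(N^2)$.

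The main obstacle is precisely this last estimate. A naive pointwise bound on $\Delta_{0,\pm 1}(f)^2 \cdot F_{j,N}(Ns)Z_N(Ns)\mathbf{1}_{Z_N\le Y_0+1}$ already yields $O(N)$ per unit time and hence $O(N^2)$ for the whole term; extracting the additional vanishing factor requires exploiting the very fast relaxation of the queue $Z_N(N\cdot)$ toward its local Poisson equilibrium, either through a Poisson-equation/coboundary argument for the fast generator in the spirit of~\citet{Kurtz}, or through a direct second-moment decomposition along the lines of the second part of the proof of Proposition~\ref{PropLa}.
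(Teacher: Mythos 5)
Your reduction via Doob's inequality, your treatment of the agent birth/death terms, and the decomposition $\Delta_{\pm e_j/N,\pm 1}(f)=\Delta_{0,\pm 1}(f)\pm N^{-1}\partial_{x_j}f(\tilde x,\cdot)$ are all sound, and you have correctly isolated the crux. But the final step --- closing the $\Delta_{0,\pm1}(f)^2$ contribution at $o(N^2)$ --- is only gestured at, and it cannot be closed by a mixing or Poisson-equation argument. After division by $N^2$, the pair-formation contribution to \eqref{CrocMf} is
\[
\sum_{j}\lambda_j\int_0^{t\wedge\tau_N(a_0)}\Delta_{0,-1}(f)\left(\overline{F}_N(s),Z_N(Ns)\right)^2\,\overline{F}_{j,N}(s)\,Z_N(Ns)\diff s\;+\;O(1/N),
\]
a time integral of a \emph{non-negative} functional of $(\overline{F}_N(s),Z_N(Ns))$. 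The fast relaxation of $Z_N(N\cdot)$ replaces the integrand by its average against the local Poisson law; it does not make a non-negative time average vanish. By the very averaging principle of Propositions~\ref{PropLa} and~\ref{pirep}, this quantity converges to $\int_0^t\int(f(x,p{-}1){-}f(x,p))^2\croc{\lambda,x}p\,\pi_s(\diff x,\diff p)\diff s$, which is strictly positive for any $f$ that genuinely depends on the discrete coordinate on the region visited by $(\overline{F}_N)$. So there is no extra vanishing factor to extract: for such $f$ one gets $\croc{M_{f,N}/N}(t)=O(1)$, i.e.\ stochastic boundedness of $(M_{f,N}(t)/N)$, not convergence to $0$.

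For comparison, the paper's own proof disposes of this term by bounding $N\Delta_{-e_j/N,\pm1}(f)^2$ by $\|\partial f/\partial x_j\|_\infty^2/N$; this is exactly the gradient piece of your decomposition and silently drops the $\Delta_{0,\pm1}(f)^2$ piece you are worrying about, so it is valid only when $f$ does not depend on the discrete variable. Your analysis is therefore sharper than the paper's, and the right conclusion to draw from it is that the statement should be weakened rather than the estimate strengthened. Note that the weaker statement suffices downstream: in Relation~\eqref{SDEf2} the pairing/splitting drift terms carry an extra factor $N$ relative to all the others, so the identification of $\pi_s(\cdot\mid x)$ in Proposition~\ref{pirep} only requires $M_{f,N}(t)/N$ to be bounded in probability (equivalently $M_{f,N}(t)/N^2\to 0$), and that already follows from the bound $\E(\croc{M_{f,N}}(t\wedge\tau_N(a_0)))=O(N^2)$ which your ``easy'' estimates establish. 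I would either state the lemma for test functions depending only on the continuous variable (where your gradient piece is the whole story) or keep your decomposition and replace ``converges to $0$'' by tightness of $(M_{f,N}(t)/N)$.
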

\begin{proof}
We take care of the third term $(A_N(t))$ of $(\croc{M_{f,N}/N}(Nt))$ in Relation~\eqref{CrocMf},  the arguments are similar for the others, even easier. For $1{\le}j{\le}J$, denote by $(A_{j,N}(t))$ the $j$th term of $(A_N(t))$, then
\[
A_{j,N}(t)\steq{def}\frac{\lambda_j}{N} \int_0^{t} N\Delta_{-e_j/N,1}(f)\left(\overline{F}_N(s),Z_N(Ns)\right)^2\overline{F}_{j,N}(s)Z_N(Ns)\diff s,
\]
then, again with Relation~\eqref{ineq1} of the proof of  Lemma~\ref{lem1Op}, since $(\overline{F}_{j,N}(t))$ is bounded by $1$, 
\[
\E(A_{j,N}(t))\leq 
\frac{\lambda_j}{N} \left\|\frac{\partial f}{\partial x_j}\right\|_\infty^2\left(\E\left(\int_0^{t}L_0(N^2s)\diff s\right){+}t\P(\tau_N(a_0)\le\ell(a_0))\right),
\]
since $(\E(L_0(t))$ is converging as $t$ goes to infinity, we have
\[
\lim_{N\to+\infty} \E(A_{j,N}(t))=0.
\]
Doob's Inequality shows the convergence of $(M_{f,N}(t))/N,t{<}\ell(a_0))$ to $0$.
\end{proof}

\begin{proposition}\label{pirep}
If $\Lambda^0_\infty$ is a limiting point of $(\Lambda^0_N)$ with the representation~\eqref{SAPpi} of Proposition~\ref{PropLa} then, for any continuous function $g$ on  $\R_+^J{\times}\N$, the relation
\begin{multline}\label{SAPpi2}
\left(\int_0^t \int_{[0,1]^J{\times}\N} g(x,p)\pi_s(\diff x,\diff p)\diff s, t{<}\ell(a_0)\right)\\ =\left(\int_0^t \int_{[0,1]^J} \E\left(g\left(x,{\cal N}_1\left(\left[0,\frac{\croc{\eta,c{-}x}}{\croc{\lambda,x}}\right]\right)\right)\right)\pi_s^{1}(\diff x)\diff s,t{<}\ell(a_0)\right),
\end{multline}
holds almost surely,  where    $\pi^{1}_t$ is the marginal of $\pi_t$ on $\R_+^J$, $\lambda$, $\eta$ and $c{\in}\R_+^J$ are given by Definition~\ref{rho} and $\ell(a_0)$ in Lemma~\ref{lem1Op}, and  ${\cal N}_1$ is a   Poisson process on $\R_+$ with rate $1$.

Furthermore, almost surely, 
\begin{equation}\label{Lowpi}
\int_0^{\ell(a_0)} \pi_s^1\left(x{\in}[0,1]^J: \sum_{j=1}^J x_j< a_0\right)\diff s=0.
\end{equation}
\end{proposition}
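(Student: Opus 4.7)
The plan is to apply the martingale identity~\eqref{SDEf2} to separable test functions $f(x,z){=}g(x)h(z)$ with $g{\in}\mathcal{C}^1([0,1]^J)$ and $h:\N{\to}\R$ of finite support. The guiding intuition is that on the fast scale $t{\mapsto}Nt$ the process $(Z_N(Nt))$ sees the slow coordinate $\overline{F}_N$ as essentially frozen and relaxes instantaneously to the invariant measure of an $M/M/\infty$ queue with arrival rate $\croc{\eta,c{-}x}$ and service rate $\croc{\lambda,x}$ per customer.

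A Taylor expansion yields $N\Delta_{-e_j/N,-1}(g{\otimes}h)(x,z){=}Ng(x)[h(z{-}1){-}h(z)]{-}\partial_jg(x)h(z{-}1){+}O(1/N)$, and symmetrically for the opposite displacement. Substituting into~\eqref{SDEf2} and collecting the dominant $O(N)$ contributions from the $\lambda_j$ and $\eta_j$ terms, the right-hand side takes the form
\[
N\int_0^t g(\overline{F}_N(s))\, L_{\overline{F}_N(s)}(h)(Z_N(Ns))\,\diff s{+}R_N(t),
\]
where $L_x(h)(z){\steq{def}}\croc{\eta,c{-}x}[h(z{+}1){-}h(z)]{+}\croc{\lambda,x}z[h(z{-}1){-}h(z)]$ is the generator of the target $M/M/\infty$ queue, and $R_N(t)$ gathers $M_{f,N}(t)/N$, the $\beta$- and $\delta$-terms, and the Taylor remainders. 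Since $h$ has finite support, $z[h(z{-}1){-}h(z)]$ and $h(z{\pm}1){-}h(z)$ are bounded on $\N$; combined with Lemma~\ref{lem1Op}, which controls $\int_0^t Z_N(Ns)\diff s$ on $[0,\ell(a_0))$, this gives $R_N(t){=}o(N)$ in probability. The LHS of~\eqref{SDEf2} is $O(1/N)$ and the martingale vanishes by Lemma~\ref{LemMart}, so after dividing the dominant balance by $N$,
\[
\int_0^t g(\overline{F}_N(s))\, L_{\overline{F}_N(s)}(h)(Z_N(Ns))\,\diff s\xrightarrow[N\to\infty]{\P}0
\]
uniformly on compact sub-intervals of $[0,\ell(a_0))$.

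Because $h$ has finite support, $(x,z){\mapsto}g(x)L_x(h)(z)$ is a bounded continuous function on $[0,1]^J{\times}\N$. Passing to the weak limit along a subsequence $\Lambda^0_{N_k}{\to}\Lambda^0_\infty$, the previous display implies that almost surely, for every $t{<}\ell(a_0)$,
\[
\int_0^t \int g(x)L_x(h)(z)\,\pi_s(\diff x,\diff z)\diff s=0.
\]
Disintegrating $\pi_s(\diff x,\diff z){=}\pi^{1}_s(\diff x)K_s(x,\diff z)$ and letting $g$ and $h$ range over countable dense families, one deduces that for Lebesgue-a.e. $s{<}\ell(a_0)$ and $\pi^{1}_s$-a.e. $x$, the measure $K_s(x,\cdot)$ is invariant for $L_x$. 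When $\croc{\lambda,x}{>}0$, this $M/M/\infty$ generator admits a unique invariant distribution, the Poisson law of parameter $\croc{\eta,c{-}x}/\croc{\lambda,x}$, i.e.\ the law of ${\cal N}_1([0,\croc{\eta,c{-}x}/\croc{\lambda,x}])$. Substituting into the disintegration of $\pi_s$ gives~\eqref{SAPpi2}.

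The required positivity $\croc{\lambda,x}{>}0$ comes precisely from~\eqref{Lowpi}. For the latter, observe that by the very definition of $\tau_N(a_0)$ the measure $\Lambda^0_N$ is supported in $\{(s,x,z):\|x\|{>}a_0\}$; hence for any non-negative continuous $\varphi$ supported in $\{\|x\|{<}a_0\}$ and $T{<}\ell(a_0)$, $\croc{\Lambda^0_N,\varphi\ind{[0,T]}}{=}0$ on $\{\tau_N(a_0){>}T\}$, an event of probability tending to one by Lemma~\ref{lem1Op}. Passing to the weak limit and approximating $\ind{\|x\|{<}a_0}$ from below by continuous functions yields~\eqref{Lowpi}, and in turn $\croc{\lambda,x}{\ge}a_0\min_j\lambda_j{>}0$ $\pi^{1}_s$-a.s. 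The main technical subtlety lies in correctly identifying the $O(N)$ balance in~\eqref{SDEf2} so that the $\lambda_j$ and $\eta_j$ contributions combine into precisely the $M/M/\infty$ generator $L_x$; once this is isolated, the finite-support choice of $h$ keeps every integrand bounded, making the weak-limit and invariance-equation arguments routine.
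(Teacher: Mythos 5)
Your proposal is correct and follows essentially the same route as the paper: plugging product test functions into the martingale identity~\eqref{SDEf2}, isolating the $O(N)$ balance as the $M/M/\infty$ generator $\Omega[x]$ acting on the $z$-variable, passing to the limit of the occupation measures, and identifying the conditional law $\pi_s(\diff p\mid x)$ as the Poisson invariant distribution, with~\eqref{Lowpi} obtained from the definition of $\tau_N(a_0)$ and Lemma~\ref{lem1Op}. Your explicit tracking of the Taylor remainders and the choice of finite-support $h$ (which keeps $z[h(z{-}1){-}h(z)]$ bounded and so sidesteps the uniform-integrability estimate~\eqref{SAPpicv} at this step) are only presentational refinements of the paper's argument.
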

Relation~\eqref{SAPpi2} states that, for almost all  $t{<}\ell(a_0)$,  $\pi_t$ conditioned on the first coordinate $x$ is a Poisson distribution with parameter $\croc{\eta,c{-}x}/\croc{\lambda,x} $. Note that Relation~\eqref{Lowpi} shows that the denominator $\croc{\lambda,x}$ is lower bounded by $\underline{\lambda} a_0$ almost surely for $\pi_t$, for $t{<}\ell(a_0)$. 
\begin{proof}
Let $(\Lambda^0_{N_k})$ be a subsequence of $(\Lambda^0_N)$ converging to some $\Lambda^0_\infty$ of the form~\eqref{SAPpi}. 
By letting $k$ go to infinity in Relation~\eqref{SDEf2}, with Lemma~\ref{lem1Op}, Relation~\eqref{SAPpicv} of Proposition~\ref{PropLa}, and Lemma~\ref{LemMart},  for any continuous function $g$ with compact support on $[0,1]^J{\times}\N$, 
\begin{multline*}
\int_{0}^t \int_{[0,1]^J{\times}\N} (g(x,p{-}1){-}g(x,p))\left(\sum_{j=1}^J \lambda_j x_j \right)p \pi_s(\diff x,\diff p)\diff s
\\+ \int_0^t \int_{[0,1]^J{\times}\N} (g(x,p{+}1){-}f(x,p))\left(\sum_{j=1}^J\eta_j (c_j{-}x_j)\right)\pi_s(\diff x,\diff p)\diff s=0,
\end{multline*}
holds almost surely  for all  $t{<}\ell(a_0)$. Hence   we have 
\begin{multline*}
\int_{[0,1]^J{\times}\N}  \lambda (g(x,p{-}1){-}g(x,p))\croc{\lambda,x}p\pi_t(\diff x,\diff p)
\\+ \int_{[0,1]^J{\times}\N} \eta(g(x,p{+}1){-}f(x,p))\croc{\eta,c{-}x}\pi_t(\diff x,\diff p)=0,
\end{multline*}
almost everywhere on $t{\in}\R_+$, or if $g(x,p){=}g_1(x)g_2(p)$,
\[
\int_{\R_+{\times}\N}  g_1(x)\Omega[x](g_2)(p)\pi_t(\diff x,\diff p)=0,
\]
where, for $h:{\N}{\to}\R_+$,
\[
\Omega[x](h)(p)\steq{def}\croc{\eta,c{-}x}(h(p{+}1){-}h(p)){+}\croc{\lambda,x} p (h(p{-}1){-}h(p)).
\]
Let $\pi_t(\cdot\mid x)$ be the conditional probability measure $\pi_t(\diff x,\diff p)$ given $x{\in}\R_+$, then we have, $\pi_t(\diff x,\N)$ almost everywhere
\[
\int\Omega[x](g_2)(p)\pi_t(\diff p\mid x)=0,
\]
for all functions $g_2$ with finite support. Since, for $x{>}0$, $\Omega[x]$ is the $Q$-matrix of an $M/M/\infty$ queue, the last relation  gives that  $\pi_t(\diff p{\mid}x)$ is its invariant distribution, i.e.  it is a Poisson distribution with parameter $\croc{\eta,c{-}x}/\croc{\lambda,x}$.

Relation~\eqref{Lowpi} is simple a consequence of Lemma~\ref{lem1Op}. 

The proposition is proved.

\end{proof}
From now on, we fix $(N_k)$  an increasing sequence such the sequence $(\Lambda^0_{N_k})$ is converging to $\Lambda^0_\infty$ with a representation given by Relations~\eqref{SAPpi} and~\eqref{SAPpi2}.   The following corollary is a direct consequence of Propositions~\ref{PropLa} and~\ref{pirep}. 
\begin{corollary}\label{CVOcc1}
If $g$ is a continuous bounded function on $[0,1]^J$, then, for the convergence in distribution, 
\begin{multline}\label{CVpi2}
\lim_{k\to+\infty}  \left(\int_0^t g\left(\overline{F}_{N_k}(s)\right) Z_{N_k}(N_ks)\diff s, t{<}\ell(a_0)\right)\\
=\left(\int_0^t\int_{[0,1]^J}g(x)\frac{\croc{\eta,c{-}x}}{\croc{\lambda,x}}\pi_s^{1}(\diff x)\diff s, t{<}\ell(a_0)\right).
\end{multline}
\end{corollary}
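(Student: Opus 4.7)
The plan is to derive the corollary by combining Relation~\eqref{SAPpicv} of Proposition~\ref{PropLa} with the identification of the disintegration of $\pi_s$ provided by Proposition~\ref{pirep}, using Lemma~\ref{lem1Op} to eliminate the stopping time implicit in the definition of $\Lambda^0_N$.

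First I would extend $g$ to $\widetilde{g}(x,z){\steq{def}}g(x)$, a bounded continuous function on $[0,1]^J{\times}\N$, and apply Relation~\eqref{SAPpicv} to $\widetilde{g}$. This yields the convergence in distribution of processes
\[
\left(\int \widetilde{g}(x,z)\,z\,\Lambda^0_{N_k}([0,t],\diff x,\diff z)\right)=\left(\int_0^{t\wedge \tau_{N_k}(a_0)}g(\overline{F}_{N_k}(s))Z_{N_k}(N_ks)\diff s\right)
\]
to the process $\left(\int g(x)\,z\,\Lambda^0_\infty([0,t],\diff x,\diff z)\right)$. Next I would remove the stopping: Lemma~\ref{lem1Op} gives $\P(\tau_{N_k}(a_0){<}\ell(a_0)){\to}0$, so on an event of probability tending to one, $t{\wedge}\tau_{N_k}(a_0){=}t$ for all $t{<}\ell(a_0)$, which transfers the convergence above to the unstopped processes $\left(\int_0^{t}g(\overline{F}_{N_k}(s))Z_{N_k}(N_ks)\diff s, t{<}\ell(a_0)\right)$.

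Finally I would evaluate the limit. Using the representation~\eqref{SAPpi} and disintegrating $\pi_s(\diff x,\diff p){=}\pi_s(\diff p{\mid}x)\pi_s^{1}(\diff x)$, Proposition~\ref{pirep} identifies $\pi_s(\diff p{\mid}x)$ as a Poisson distribution with parameter $\croc{\eta,c{-}x}/\croc{\lambda,x}$, whose mean is exactly this parameter. Relation~\eqref{Lowpi} guarantees $\croc{\lambda,x}{\ge}\underline{\lambda}a_0{>}0$ for $\pi_s^{1}$-almost every $x$ and almost every $s{<}\ell(a_0)$, so the denominator is bounded away from zero and Fubini delivers
\[
\int g(x)\,z\,\Lambda^0_\infty([0,t],\diff x,\diff z)=\int_0^t\int_{[0,1]^J}g(x)\frac{\croc{\eta,c{-}x}}{\croc{\lambda,x}}\,\pi_s^{1}(\diff x)\diff s.
\]

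The only mild obstacle is that the function $(x,p){\mapsto}g(x)p$ is unbounded in $p$, so Relation~\eqref{SAPpi2} cannot be substituted into verbatim. The natural fix is to truncate, applying~\eqref{SAPpi2} to the bounded continuous test function $g(x)(p{\wedge}C)$ and then letting $C{\to}\infty$. The required uniform integrability is exactly the $M/M/\infty$ second-moment bound $\sup_{u{\ge}0}\E(L_0(u)^2){<}{+}\infty$ obtained from the coupling of Lemma~\ref{lem1Op}, which is already invoked at the end of the proof of Proposition~\ref{PropLa}; hence this step is essentially for free and the corollary follows.
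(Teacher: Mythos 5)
Your proposal is correct and follows exactly the route the paper intends: the paper states this corollary as a direct consequence of Propositions~\ref{PropLa} and~\ref{pirep} without writing out the details, and your argument — applying Relation~\eqref{SAPpicv} to $\widetilde{g}(x,z){=}g(x)$, discarding the stopping time via Lemma~\ref{lem1Op}, and identifying the limit through the Poisson disintegration of $\pi_s$ with a truncation in $p$ controlled by the second-moment bound on $L_0$ — is precisely the missing verification. No gaps.
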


The following proposition is a convergence result for the scaled process $(\overline{F}_N(t))$. It is weaker that the convergence stated in Theorem~\ref{TheoLLN} clearly, but this is a crucial ingredient to establish the theorem in fact. 
\begin{proposition}\label{NormF}
The sequence of processes $(\|\overline{F}_{N_k}(t)\|, t{<}\ell(a_0))$ is converging in distribution to
\[
(H(t), t{<}\ell(a_0))\steq{def} \left(\|\overline{f}_0\|{+}\int_0^t\int_{[0,1]^J} \left(\delta\frac{\croc{\eta,c{-}x}}{\croc{\lambda,x}}{-}\beta \right)\pi_s^{1}(\diff x)\diff s, t{<}\ell(a_0)\right).
\]
\end{proposition}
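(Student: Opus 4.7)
My plan is to derive a closed expression for $\|\overline{F}_N(t)\|$ in terms of quantities already controlled in Lemma~\ref{lem1Op}, Proposition~\ref{PropLa} and Corollary~\ref{CVOcc1}, and then pass to the limit termwise.

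The key observation is a conservation identity for the total number of agents. Every particle of type $j$ is either free or paired with an agent, hence the total number of agents at time $t$ equals $Z_N(t){+}(N{-}\|F_N(t)\|)$. This quantity is unchanged by pairing/splitting events and is modified only by creation of an agent (rate $\beta$) and death of a free agent (rate $\delta Z_N(t)$). Summing the SDEs~\eqref{SDEd1} and~\eqref{SDEd2}, or reading it directly from $(T_N(t)){=}(Z_N(t){+}N{-}\|F_N(t)\|)$, gives
\[
  \|F_N(t)\|=\|F_N(0)\|+\bigl(Z_N(t){-}z_0\bigr)-\beta t+\delta\int_0^t Z_N(s)\diff s-M_N(t),
\]
where $(M_N(t))$ is a square-integrable martingale with jumps $\pm1$ and previsible bracket $\croc{M_N}(t){=}\beta t{+}\delta\int_0^t Z_N(s)\diff s$.

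Next I time-change by $Nt$ and divide by $N$:
\[
  \|\overline{F}_N(t)\|=\|\overline{F}_N(0)\|+\frac{Z_N(Nt){-}z_0}{N}-\beta t+\delta\int_0^t Z_N(Ns)\diff s-\frac{M_N(Nt)}{N}.
\]
I then handle the four non-trivial terms on the right one by one on $[0,\ell(a_0))$. The term $(Z_N(Nt){-}z_0)/N$ tends to $0$ in distribution by Lemma~\ref{lem1Op} (which gives $Z_N(Nt)/\sqrt{N}{\to}0$). The integral $\int_0^t Z_N(Ns)\diff s$ is exactly the quantity in Corollary~\ref{CVOcc1} applied to $g{\equiv}1$, so it converges in distribution, along the subsequence $(N_k)$, to $\int_0^t\int_{[0,1]^J}\croc{\eta,c{-}x}/\croc{\lambda,x}\,\pi_s^1(\diff x)\diff s$, which is continuous in $t$. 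For the martingale, I compute
\[
  \croc{M_N(N\cdot)/N}(t)=\frac{\beta t}{N}+\frac{\delta}{N}\int_0^t Z_N(Ns)\diff s,
\]
and this vanishes in distribution since the integral converges to a finite limit. Doob's inequality then yields $(M_N(Nt)/N){\to}0$ uniformly on compact sets.

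Assembling the four convergences (jointly with the occupation measure) and using $\|\overline{F}_N(0)\|{\to}\|\overline{f}_0\|$, the continuous mapping theorem gives, along $(N_k)$ and on $[0,\ell(a_0))$,
\[
  \|\overline{F}_{N_k}(t)\|\;\Longrightarrow\;\|\overline{f}_0\|-\beta t+\delta\int_0^t\int_{[0,1]^J}\frac{\croc{\eta,c{-}x}}{\croc{\lambda,x}}\pi_s^1(\diff x)\diff s,
\]
and since $\pi_s^1$ is a probability measure this is exactly $(H(t))$. I expect the only real care to be in the martingale step: the bracket contains $\int_0^t Z_N(Ns)\diff s$, which is only controlled through the stopped occupation measure, so the argument must stay on the time interval $t{<}\ell(a_0)$ provided by Lemma~\ref{lem1Op} and inherited from the definition of $\Lambda^0_N$. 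Tightness of $(\|\overline{F}_N(t)\|)$ itself is then a free by-product of the representation, every term on the right being tight.
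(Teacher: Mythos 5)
Your proof is correct and follows essentially the same route as the paper: the paper introduces the total agent count $\widetilde{Z}_N(t){=}Z_N(t){+}N{-}\|F_N(t)\|$, whose only transitions are creation at rate $\beta$ and death of free agents at rate $\delta Z_N(t)$, and this yields exactly your conservation identity (Relations~\eqref{eqd1} and~\eqref{eqZb}), after which the terms are handled as you do, via Lemma~\ref{lem1Op} for $Z_N(Nt)/N$, the occupation-measure convergence for $\int_0^t Z_N(Ns)\diff s$, and Doob's inequality for the martingale. Your observation that the argument must remain on $[0,\ell(a_0))$ because the integral is only controlled through the stopped occupation measure is precisely the caveat implicit in the paper's statement.
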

\begin{proof}
  We define, for $t{\ge}0$, 
\begin{equation}\label{eqd1}
\widetilde{Z}_N(t)=Z_N(t){+}\sum_{j=1}^J (N_j{-}F_{j,N}(t))= N{-}\|F_N(t)\|{+}Z_N(t),
\end{equation}
$\widetilde{Z}_N(t)$ is the total number of agents (free or paired)  of the system at time $t$.  Using the SDEs~\eqref{SDE1} and~\eqref{SDE2},
 we have
\begin{equation}\label{eqZb}
\frac{\widetilde{Z}_N(Nt)}{N}=M_{Z,N}(t){+}\frac{\widetilde{Z}_N(0)}{N}{+}\beta t{-}\delta\int_0^tZ_N(Ns)\diff s,
\end{equation}
where $(M_{Z,N}(t))$ is a local martingale whose previsible increasing process is given by
\begin{equation}\label{eqZMcroc}
\left(\croc{M_{Z,N}}(t)\right)=\left(\frac{1}{N}\left(\beta t{+}\delta\int_0^tZ_N(Ns)\diff s\right)\right).
\end{equation}
Using again Doob's Inequality, Lemma~\ref{lem1Op} and Relation~\eqref{eqd1}, the proposition is proved.
\end{proof}
The next proposition gives a characterization of the process $(\pi^1_s)$ which will be elucidated in Proposition~\ref{TechProp}.
\begin{proposition}\label{PropPi}
If $g$ is a ${\cal C}_1$-function on $[0,1]^J$, then, almost surely, 
\begin{equation}\label{EqDefPi}
\left(\int_0^{t} \int_{[0,1]^J}\hspace{-1mm}\sum_{j=1}^J\hspace{-1mm} \frac{\partial g}{\partial x_j}(x)\hspace{-1mm}  \left(\lambda_jx_j \frac{\croc{\eta,c{-}x}}{\croc{\lambda,x}}{-}\eta_j(c_j{-}x_j)\right)\pi_s^{1}(\diff x)\diff s, t{<}\ell(a_0)\right){=}(0).
\end{equation}
\end{proposition}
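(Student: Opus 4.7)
The plan is to apply the semimartingale decomposition~\eqref{SDEf2} to the test function $f(x,y){=}g(x)$ that does not depend on $y$, extending $g$ to a compactly supported $C^1$ function on $\R^J$ if needed (this is harmless since $\overline{F}_N(s){\in}[0,1]^J$). Because $f$ is independent of the second variable, the $\beta$- and $\delta$-driven terms in~\eqref{SDEf2} vanish identically, and a Taylor expansion yields $N\Delta_{\pm e_j/N,\pm 1}(f)(x,y){=}{\pm}\frac{\partial g}{\partial x_j}(x){+}O(1/N)$ uniformly in $(x,y)$, with the remainder controlled by $\|\nabla^2 g\|_\infty$.

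On the interval $[0,t]$ with $t{<}\ell(a_0)$, the next task is to show that three of the five terms in the resulting identity are asymptotically negligible. The left-hand side $(g(\overline{F}_N(t)){-}g(\overline{F}_N(0)))/N$ vanishes because $g$ is bounded; the martingale $M_{f,N}(t)/N$ vanishes by Lemma~\ref{LemMart}; and the Taylor remainders vanish in expectation after dominating $Z_N(Ns){\le}L_0(N^2s)$ via the coupling~\eqref{ineq1} up to $\tau_N(a_0)$ and using $\sup_u \E(L_0(u)){<}{+}\infty$. This leaves two genuine integrals. The ``$\eta$-integral'', with bounded integrand $\eta_j(c_j{-}x_j)\frac{\partial g}{\partial x_j}(x)$, passes to the limit directly through the weak convergence $\Lambda^0_{N_k}{\to}\Lambda^0_\infty$ given by Proposition~\ref{PropLa}, contributing $\sum_j \eta_j(c_j{-}x_j)\frac{\partial g}{\partial x_j}(x)$ integrated against $\pi_s^1$.

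The delicate step is identifying the limit of the ``$\lambda$-integral'', whose integrand $\lambda_j \frac{\partial g}{\partial x_j}(x)\, x_j\, p$ is unbounded in $p$. Here I would invoke the strengthened convergence~\eqref{SAPpicv} of Proposition~\ref{PropLa}, which specifically controls integrals against $p\,\Lambda^0_N(\diff s,\diff x,\diff p)$; combined with Proposition~\ref{pirep}, which identifies the conditional law $\pi_s(\cdot\mid x)$ as Poisson with parameter $\croc{\eta,c{-}x}/\croc{\lambda,x}$, the conditional expectation of $p$ given $x$ equals exactly this parameter, producing the factor $\croc{\eta,c{-}x}/\croc{\lambda,x}$ in~\eqref{EqDefPi}. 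Relation~\eqref{Lowpi} keeps $\croc{\lambda,x}{\ge}\underline{\lambda}a_0{>}0$ on the support of $\pi_s^1$, so this ratio is well defined. Collecting both limits with their opposite signs yields~\eqref{EqDefPi}. The main obstacle is precisely this last passage to the limit: mere weak convergence of the occupation measures does not suffice because of the unbounded $Z_N$ factor, and one must rely on the $M/M/\infty$-based second-moment bound underlying Proposition~\ref{PropLa} to legitimize the replacement of $p$ by its conditional Poisson mean against $\pi_s$.
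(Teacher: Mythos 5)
Your proposal is correct and follows essentially the same route as the paper: apply the decomposition~\eqref{SDEf2} to a test function depending only on the first variable (so the $\beta$- and $\delta$-terms drop out), kill the $O(1/N)$ boundary and martingale terms, pass the bounded $\eta$-integral to the limit by weak convergence of $(\Lambda^0_{N_k})$, and handle the unbounded factor $p$ in the $\lambda$-integral via the strengthened convergence~\eqref{SAPpicv} together with the Poisson identification of $\pi_s(\cdot\mid x)$ from Proposition~\ref{pirep}. The only cosmetic difference is that the paper re-derives the bound on $\croc{M_{g,N}}$ directly (since Lemma~\ref{LemMart} as stated assumes compact support in both variables) and, like you, works with a ${\cal C}_2$ test function before concluding.
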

\begin{proof}
For $t{>}0$, let $g$ be a ${\cal C}_2$-function on $[0,1]^J$, Relation~\eqref{SDEf2} gives, 
\begin{multline}\label{ChAEq1}
\frac{1}{N}g\left(\overline{F}_N(t)\right)=\frac{1}{N} g\left(\overline{F}_{N}(0)\right){+}M_{g,N}(t)\\
+\sum_{j=1}^J \lambda_j \int_0^{t} N\nabla_{-e_j/N}(g)\left(\overline{F}_{N}(s)\right) \overline{F}_{j,N}(s)Z_N(Ns)\diff s\\
  {+}\sum_{j=1}^J \eta_j \int_0^{t} N\nabla_{e_j/N}(g)\left(\overline{F}_N(s)\right)\left(\frac{C_j^N}{N}{-}\overline{F}_{j,N}(s)\right)\diff s,
\end{multline}
and, since
\begin{multline*}
\left(\croc{M_{g,N}}(t)\right)
=\left(\frac{1}{N^2}\sum_{j=1}^J \lambda_j \int_0^{t} \left(\rule{0mm}{4mm}N\nabla_{-e_j/N}(g)\left(\overline{F}_{N}(s)\right)\right)^2\overline{F}_{j,N}(s)Z_N(Ns)\diff s
\right. \\\left.  {+}\frac{1}{N^2}\sum_{j=1}^J \eta_j\int_0^{t}  \left(\rule{0mm}{4mm}N\nabla_{e_j/N}(g)\left(\overline{F}_{N}(s)\right)\right)^2\left(\frac{C_j^N}{N}{-}\overline{F}_{j,N}(s)\right)\diff s\right),
\end{multline*}
Relation~\eqref{SAPpicv} shows that, for $t{<}\ell(a_0)$, $\E(\croc{M_{g,{N_k}}}(t))$ is converging to  $0$,  the martingale $(M_{g,{N_k}}(t))$ converges therefore in distribution to $0$ as $k$ gets large. By using again Relation~\ref{SAPpicv} and the differentiability properties of $g$, it is easy to conclude the proof of the proposition. 
\end{proof}
For $1{\le}j{\le}J$, by taking a function $g(x){=}h(x_j)$, $x{=}(x_i){\in}[0,1]^J$, where $h$ is ${\cal C}_2$, we obtain, that the relation
\[
\left(\int_0^{t} \int_{[0,1]^J} h'(x_j)  \left(\lambda_jx_j \frac{\croc{\eta,c{-}x}}{\croc{\lambda,x}}{-}\eta_j(c_j{-}x_j)\right)\pi_s^{1}(\diff x)\diff s, t{<}\ell(a_0)\right)=(0)
\]
holds almost surely. Hence, almost surely, for any $f$ in a dense subset of Borelian functions on $[0,1]$, the identity
\[
\int_{[0,1]^J} f(x_j)  \left(\lambda_jx_j \frac{\croc{\eta,c{-}x}}{\croc{\lambda,x}}{-}\eta_j(c_j{-}x_j)\right)\pi_t^{1}(\diff x)
\]
holds almost everywhere for $t{\in}\R_+$, with respect to Lebesgue's measure.

If $U(t){=}(U_j(t))$ is a random variable with distribution $\pi_t^1$, the last relation can be translated in terms of conditional expectation, almost surely 
\[
\lambda_jU_j(t)\E\left.\left(\frac{\croc{\eta,c{-}U(t)}}{\croc{\lambda,U(t)}}\right| U_j(t) \right)=\eta_j(c_j{-}U_j(t)),
\]
almost everywhere for $t{\ge}0$. The following proposition is the key step to identify completely the limit points of $((\|\overline{F}_N(t)\|),\Lambda^0_N)$.
\begin{proposition}\label{TechProp}
Let $U{=}(U_j)$ be a random variable on  $\prod_1^J[0,c_j]$, such that ,  almost surely, $\|U\|{\ge}\eta$, for some $\eta{>}0$, and,  for $1{\le}j{\le}J$, 
\begin{equation}\label{eqCond}
\lambda_jU_j\E\left.\left(\frac{\croc{\eta,c{-}U}}{\croc{\lambda,U}}\right| U_j \right)=\eta_j(c_j{-}U_j),
\end{equation}
then, almost surely,
\[
U_j=\frac{\rho_jc_j}{\phi(\|U\|){+}\rho_j},
\]
where for $y{\in}(0,1)$, $\phi(y)$ is the unique solution of the equation
\[
\sum_{j=1}^J \frac{\rho_j}{\rho_j{+}\phi(y)}c_j=y,
\]
and $(\rho_j)$ is given by Definition~\ref{rho} and $\phi$ in Theorem~\ref{TheoLLN}. 
\end{proposition}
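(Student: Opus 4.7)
The strategy is to combine~\eqref{eqCond} with the pointwise identity $\Phi\croc{\lambda,U}=\croc{\eta,c-U}$, where $\Phi{\steq{def}}\croc{\eta,c-U}/\croc{\lambda,U}$, to obtain an $L^2$ identity forcing $\mathrm{Var}(\Phi\mid U_j)=0$ almost surely for every $j$. Once this holds, the conditional expectations $\Theta_j{\steq{def}}\E(\Phi\mid U_j)$ all coincide with $\Phi$ itself, and the conclusion follows by summing $U_j=\rho_jc_j/(\rho_j+\Phi)$ and invoking the definition~\eqref{phi} of $\phi$.

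Rewrite~\eqref{eqCond} as $\lambda_jU_j\Theta_j=\eta_j(c_j-U_j)$; summing over $j$ and comparing with the defining relation for $\Phi$ gives $\Phi\croc{\lambda,U}=\sum_{j=1}^J\lambda_jU_j\Theta_j$ almost surely. The crucial step is to multiply this identity by $\Phi$, take expectations, and compute $\E(\croc{\lambda,U}\Phi^2)$ in two ways. On one side, $\lambda_jU_j\Theta_j$ is $\sigma(U_j)$-measurable, so the tower property gives $\E(\lambda_jU_j\Theta_j\Phi)=\E(\lambda_jU_j\Theta_j^2)$. On the other, expanding via $\E(\Phi^2\mid U_j)=\Theta_j^2+\mathrm{Var}(\Phi\mid U_j)$ gives $\E(\lambda_jU_j\Phi^2)=\E(\lambda_jU_j\Theta_j^2)+\lambda_j\E(U_j\mathrm{Var}(\Phi\mid U_j))$. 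Matching and summing over $j$ produces
\[
\sum_{j=1}^J\lambda_j\,\E\bigl(U_j\mathrm{Var}(\Phi\mid U_j)\bigr)=0,
\]
so $\lambda_jU_j\mathrm{Var}(\Phi\mid U_j)=0$ almost surely for each $j$ by positivity of the summands.

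To upgrade this to $\mathrm{Var}(\Phi\mid U_j)=0$, one needs $U_j>0$ almost surely, and this is where the hypothesis $\|U\|\geq\eta>0$ enters. Since $\croc{\lambda,U}\geq\underline{\lambda}\|U\|\geq\underline{\lambda}\eta$ and $\croc{\eta,c-U}\leq\overline{\eta}\|c\|$, the random variable $\Phi$ is bounded above by $M{\steq{def}}\overline{\eta}\|c\|/(\underline{\lambda}\eta)$, hence so is $\Theta_j$. Combined with the inverted relation $\Theta_j=\rho_j(c_j-U_j)/U_j$, this forces $U_j\geq\rho_jc_j/(\rho_j+M)>0$ almost surely.

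Consequently $\Phi=\Theta_j$ almost surely for every $j$, so all the $\Theta_j$ collapse onto the single random variable $\Phi$. Writing $U_j=\rho_jc_j/(\rho_j+\Phi)$ and summing yields $\|U\|=\sum_{j=1}^J\rho_jc_j/(\rho_j+\Phi)$, which by Relation~\eqref{phi} forces $\Phi=\phi(\|U\|)$, giving $U_j=\rho_jc_j/(\rho_j+\phi(\|U\|))$ as claimed. The main obstacle is the $L^2$ identity of the second paragraph: the key insight is to multiply the pointwise relation by $\Phi$ before integrating, so that the tower property applies term-by-term on the right-hand side and produces a sum of conditional variances, which then has to vanish by positivity; once this move is in place, the remainder of the argument is essentially mechanical.
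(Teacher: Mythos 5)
Your proof is correct. It belongs to the same family as the paper's argument---both ultimately rest on the integrated $L^2$ identity $\E\bigl(\croc{\lambda,U}\Phi^2\bigr)=\sum_j\E\bigl(\lambda_jU_j\Theta_j^2\bigr)$ obtained from the tower property---but the nonnegative quantity you force to vanish is a different one. The paper sets $A_j=\rho_j(c_j-U_j)/U_j$ (which equals your $\Theta_j$ under~\eqref{eqCond}) and $B_j=\lambda_jU_j$, multiplies the conditional identity by $A_jB_j$, and symmetrizes over pairs $(j,k)$ so that the pointwise weighted variance $\sum_{j,k}(A_j-A_k)^2B_jB_k/\croc{\lambda,U}$ has zero expectation, giving $A_1=\cdots=A_J$ almost surely. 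You instead expand $\E(\Phi^2\mid U_j)$ and conclude $\sum_j\lambda_j\E\bigl(U_j\,\mathrm{Var}(\Phi\mid U_j)\bigr)=0$, hence $\Phi=\Theta_j$ for every $j$; the two conclusions are equivalent, since $\Phi$ is the $B$-weighted average of the $A_k$. Your route makes the state-space collapse transparent ($\Phi$ is measurable with respect to each $\sigma(U_j)$ separately) and yields as a by-product the uniform lower bound $U_j\ge\rho_jc_j/(\rho_j+M)$, a point the paper passes over quickly when it asserts that the $B_j$ are almost surely positive; the paper's pairwise symmetrization is purely algebraic and dispenses with conditional variances. One ordering remark: the boundedness of $\Phi$ (hence of $\Theta_j$), which you only establish in your third paragraph, is already what legitimizes the expectations and the use of $\E(\Phi^2\mid U_j)$ in the second, so it should be recorded first; it is immediate from $\croc{\lambda,U}\ge\underline{\lambda}\,\|U\|\ge\underline{\lambda}\,\eta$.
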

\begin{proof}
  Define, for $1{\le}j{\le}J$,
  \[
  A_j\steq{def} \frac{\eta_j(c_j{-}U_j)}{ \lambda_jU_j}, \quad
  B_j\steq{def} \lambda_jU_j,
  \]
  and ${\cal F}_j$ is the $\sigma$-field associated to $U_j$. Note that, because of the assumptions the variables $A_j$ and $B_j$ are bounded by Relation~\eqref{eqCond}, this identity can be re-written as
  \[
  \E\left.\left(\frac{\sum_k A_kB_k}{\sum_k B_k}\right|{\cal F}_j\right)=A_j,
  \]
  or, since $A_j$ and $B_j$ are ${\cal F}_j$-measurable
  \[
\E\left.\left(\frac{\sum_k (A_k{-}A_j)A_jB_jB_k}{\sum_k B_k}\right|{\cal F}_j\right)=0,
\]
this identity gives therefore the relation
\[
\E\left(\frac{\sum_{j,k} (A_j{-}A_k)^2B_jB_k}{\sum_k B_k}\right)=
2\sum_{j=1}^J\E\left(\frac{\sum_k (A_j{-}A_k)A_jB_jB_k}{\sum_k B_k}\right)=0,
\]
since the variables $B_j$, $j{=}1$,\ldots,$J$ are almost surely positive, this implies that, almost surely,
$A_j{=}A_1$ for $j{=}1$,\ldots,$J$, and therefore
\[
U_j=\frac{\rho_jc_j}{A_1{+}\rho_j},
\]
the proposition is proved. 
\end{proof}

\begin{proof}[Proof of Theorem~\ref{ScalDef}]
We have only to gather all the results obtained up to now. Proposition~\ref{NormF} shows that, on the time interval $[0,\ell(a_0))$, the sequence of processes $(\|F_{N_k}(t)\|)$ converges in distribution to $(H(t))$ given by 
\begin{multline}\label{ChODE}
  (H(t))=\left(\|\overline{f}_0\|{+}\int_0^t\int_{[0,1]^J} \left(\delta\frac{\croc{\eta,c{-}x}}{\croc{\lambda,x}}{-}\beta \right)\pi_s^{1}(\diff x)\diff s\right)
\\  =  \left(\|\overline{f}_0\|{+}\int_0^t(\delta\phi(H(s)){-}\beta)\diff s\right),
\end{multline}
  by Proposition~\ref{TechProp}. This determines completely $(H(t))$ as the solution of Relation~\eqref{NormLim}, and therefore the convergence of the sequence $(\|F_{N}(t)\|)$. Relation~\eqref{EqDefPi} of Proposition~\ref{PropPi} and  Proposition~\ref{TechProp} gives the desired expression. Relation~\eqref{OccLim} for the limit of the sequence  $(\Lambda_N^0)$ of occupation measures.

  It is easily seen that the solution of   Relation~\eqref{NormLim} satisfies Corollary~\ref{corotheo}, in particular, if $H(0){>}0$, there exists $w{>}0$ such that $H(t){\ge} w$ for all $t{\ge}0$. The convergence in distribution obtained can be therefore extended to any finite interval of $\R_+$. The theorem is proved. 
\end{proof}

\printbibliography

\appendix
\addcontentsline{toc}{section}{Biological Background}
\addtocontents{toc}{\protect\setcounter{tocdepth}{0}}

\section{Biological Background}\label{BioSec}
In bacterial cells,  protein  production uses an important number of cell resources:  macro-molecules such as {\em polymerases} and {\em ribosomes},  biological bricks of proteins, i.e.,  amino acids, and  the energy necessary to build proteins, such as GTP. The two main steps associated with  protein and RNA production are
\begin{itemize}
\item {\em Transcription}.  
When an RNA  polymerase is bound to an active gene, it starts to make a copy of this gene. The   product which is a sequence of nucleotides is an {\em  RNA}. If the gene is associated to a protein, it is a {\em messenger} RNA,  an {\em mRNA}.   When the full sequence of nucleotides of the RNA has been  successively assembled,  the RNA is released in the cytoplasm. 
\item[]
\item {\em Translation}. The step is achieved through another large macro{-}molecule: a {\em ribosome}. When a ribosome  is bound to an mRNA, it builds a chain of amino{-}acids  using the mRNA as a template to produce a linked  chain of amino-acids, a {\em  protein}.
\end{itemize}
There are several types of RNAs, outside mRNAs, tRNAs to carry amino-acids for the translation phase, rRNAs which are (large) building blocks of ribosomes. The average size of an mRNA is of the order of 300 nucleotids (nt), the size of an rRNA is of the order of 5000nt. Another class of RNAs, the small RNAs, or sRNAs, has been discovered in the 1970's, the 6S sRNAs in particular. The size of a sRNA is of the order of 50-100nt and their functional role of regulation has been identified around 2000, quite recently in fact.  See~\citet{Hindley} and~\citet{Beisel}. 

In a biological context, the internal dynamics of cells can be, essentially, expressed in terms of  pairing mechanisms of various couples of macro-molecules.  The quite recent discovery of the existence of small RNAs and of their functional role in the regulation of the protein production has shed a new light on pairing mechanisms as a general tool to control gene expression.  Pairing mechanisms are to regulate the growth of cells in order to adapt to a varying environment. Depending on external  conditions, it may be desirable to reduce or speed-up  the growth of the cell, and so the use of these macro-molecules. A specific regulation mechanism relies on a type of macro-molecules, sRNAs, small RNAs, which we will refer to as {\em agents} in the following. Their functional role has been discovered only recently in fact, at the end of the 20th century. They may bind/pair with one of these  macro-molecules and the pairing may have several effect, depending on the type of sRNA:  it can sequester a  macro-molecule/particle and, therefore, reduce significantly its interactions with other components of the cell. Or it may  speed-up the activity of the macro-molecule by increasing its interaction rate (affinity) with other components of the cell.

Due to thermal noise inside the cell, a particle/agent pair will split after a certain time. These separation mechanisms are as important  as binding events are. The quantity of agents presents within the cell is also highly variable, and their quantity is actively regulated by the cell through a variety of regulatory mechanisms. In most prokaryotes, when a cell grows, its size  increases. Without any action, the concentrations of agents/particles inside the cell begin to decrease by dilution. In the case of a limited number of agents, the cell has specific means of degrading agents to rapidly reduce their quantity if dilution is not enough. When the cell environment changes, the number of active agents may have to increase to adapt. It can be done  by either generating new agents and particles or, more rapidly, by altering the quantity of active agents, i.e., agents bound to particles, by enhancing couplings. The duration of these transitions is an important characteristic of the control mechanism under consideration. In general it has two terms. The first term is linked to the production of new agents (and is therefore rather slow), while the second term is linked to a (quick) change in the number of active machines via bindings of agents with particles. The rate of adaptation depends on the combination of these two mechanisms, and is therefore a crucial issue in the case of cells. Assuming that the environment is such that regulation is necessary, the fraction of particles that are bound to an agent is a measure of the efficiency of these mechanisms. We refer to Section~\ref{BioSec} of the appendix for references and further details on the biological context.

We describe several examples of regulation via pairing mechanisms. For the sake of simplicity, we  give a rough description, not all macro-molecules, mechanisms involved are not mentioned. The main goal is of showing that pairing mechanisms play a major role concerning the regulation of the cell.  References with much more details are mentioned.
\subsection*{Regulation of Transcription}
The  6S sRNA is in charge of regulating the transcription phase, via a pairing with a $\sigma$-factor, a macromolecule necessary to the initiation of  transcription. See~\citet{Fromion2} for more details. With a slight abuse, by ignoring the  $\sigma$-factor, the mechanism can be represented as a chemical reaction with three chemical species particles, 
\[
\mathrm{RNAP}{+}\mathrm{sRNA} \xrightleftharpoons{\phantom{aa}} \mathrm{RNAP}{-}\mathrm{sRNA},
\]
where RNAP stands for RNA polymerase. In such a context the pairing of RNAP and sRNA is seen as a sequestration of polymerases. See~\citet{Waters2009}, \citet{Wassarman2000} and \citet{Felden}. See~\citet{Nitzan2014}.

\subsection*{Regulation of Translation}
There is a similar regulation mechanism for ribosomes. The pairing of a macromolecule denoted as (p)ppGpp with the ribosome has the main effect of interfering with the initiation phase of translation and leading to abort the operation. See~\citet{Yang}, \citet{Fer} and~\citet{Hauryliuk} for more details. 

\subsection*{Regulation of mRNAs by sRNAs}
The translation can be also controlled via the mRNAs in the following way. The pairing of an sRNA and an mRNA modifies the translation efficiency of the mRNA. It can repress or enhance its activity.  The pairing of an sRNA and mRNA . See~\citet{Waters2009}, \citet{Jagodnik2017} and \citet{Beisel} for prokaryotic cells, and~\citet{Flynt} for eukaryotic cells, with micro RNAs, miRNAs,  acting on mRNAs. The references~\citet{Jaya},~\citet{Baker2012} and \citet{Giudice} study the Fokker-Planck equations for Markovian models of this mechanism.

\section{Technical Results}\label{TechApp}
\subsection{The $M/M/1$ queue}\label{MM1}
We introduce a birth and death process  on $\N$ which is in fact a reflected random walk on $\N$. 
An $M/M/1$ queue with input rate $\gamma$ and service rate $\mu$  on $\N$ is a Markov process $(L_1(t))$ with $Q$-matrix matrix  given by 
  \[
z\mapsto \begin{cases}
z{+}1,&   \gamma,\\
z{-}1,&  \mu\ind{z{\ge}1}.
\end{cases}
\]
If $\gamma{<}\mu$, its invariant distribution is a geometric distribution with parameter $\gamma/\mu$.
Define
\[
T_K=\inf\left\{t: L_\infty(t){\ge} K\right\},
\]
 Proposition~5.11 of  Chapter~5 of ~\citet{Robert} gives the asymptotic behavior of $T_K$ when $K$ is large. 
\begin{proposition}\label{HitMM1}
If $\gamma{<}\mu$ and  $L(0){=}z_0$, then, as $K$ goes to infinity, the random variable 
\[
\left(\frac{\gamma}{\mu}\right)^{K}T_K
\]
converges in distribution to an exponential random variable. 
\end{proposition}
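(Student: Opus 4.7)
The plan is to exploit the regenerative structure of the $M/M/1$ queue at its returns to zero, together with the classical fact that a geometric sum of i.i.d.\ random variables with vanishing success probability converges, after scaling, to an exponential distribution. Since from any initial state $z_0$ the time $\sigma_0$ to hit $0$ is almost surely finite (the queue is positive recurrent when $\gamma{<}\mu$) and has an exponentially decaying tail, we may first note that $\rho^K\sigma_0{\to}0$ in probability with $\rho{=}\gamma/\mu$, so we can reduce the proof to the case $L_1(0){=}0$.

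Starting from $0$, I would decompose $T_K$ according to the successive excursions away from $0$. Denote by $\xi_1,\xi_2,\ldots$ the i.i.d.\ durations between consecutive visits to $0$ (each one being a sojourn at $0$ of rate $\gamma$ followed by a busy period of finite expectation $1/(\mu{-}\gamma)$), and let $p_K$ be the probability that one such excursion reaches level $K$. By the strong Markov property, the number of excursions $N_K$ before the one that hits $K$ is geometrically distributed with parameter $p_K$, so
\[
T_K=\sum_{i=1}^{N_K-1}\xi_i^{(K)}+\widetilde{\xi}_K,
\]
where the $\xi_i^{(K)}$ are i.i.d.\ copies of $\xi_1$ conditioned on not reaching $K$ and $\widetilde{\xi}_K$ is one excursion conditioned on reaching $K$. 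The quantity $p_K$ is computed via the gambler's ruin formula for the embedded jump chain on $\{0,\ldots,K\}$, which is a biased random walk with up-probability $\gamma/(\gamma{+}\mu)$: starting from $1$, the probability of hitting $K$ before returning to $0$ equals $(1{-}\rho)\rho^{K-1}/(1{-}\rho^K)$, so $p_K\sim(1{-}\rho)\rho^{K-1}$ as $K{\to}\infty$.

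For the convergence, I would use that $p_K N_K$ converges in distribution to an exponential random variable with parameter $1$, and that, since $p_K{\to}0$, the conditional law of $\xi_i^{(K)}$ converges to the unconditional law of $\xi_1$, with uniformly integrable moments (the tail of $\xi_1$ is dominated by that of a busy period, which has exponential moments in the stable regime). Thus, applying Anscombe's theorem together with the law of large numbers to the random sum $\sum_{i=1}^{N_K}\xi_i^{(K)}$, one obtains $p_K T_K\to \E(\xi_1)\cdot\mathcal{E}$ in distribution, where $\mathcal{E}$ is exponential with parameter $1$; the residual term $\widetilde{\xi}_K$ is handled by showing that $p_K\widetilde{\xi}_K$ vanishes in probability, because on excursions reaching $K$ the conditional expectation of the hitting time is polynomial in $K$ while $p_K$ decays geometrically. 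Since $\rho^K$ and $p_K$ differ only by a multiplicative constant, one concludes that $\rho^K T_K$ converges to an exponential random variable.

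The main obstacle will be the control of the conditional excursion distributions: one must verify that uniformly in $K$ the conditioned excursion $\xi_i^{(K)}$ has integrable (even uniformly integrable) variance so that the Anscombe-style substitution is valid, and symmetrically that the "successful" excursion $\widetilde{\xi}_K$ does not contribute at the scale $\rho^{-K}$. Both points reduce to tail estimates for the busy period of the $M/M/1$ queue, which are standard but require care; once these are in hand, the geometric-sum limit yields the exponential distribution and completes the proof.
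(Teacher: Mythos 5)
Your proposal is correct in substance, and the technical points you single out (uniform integrability of the conditioned cycle lengths, and the negligibility of the final ``successful'' excursion at scale $\rho^{-K}$ with $\rho{=}\gamma/\mu$) are exactly the ones that need care; they all reduce to the fact that a cycle of the stable $M/M/1$ queue, being an exponential idle period plus a busy period, has finite exponential moments. Note, however, that the paper does not prove this proposition at all: it is quoted verbatim from Proposition~5.11 of Chapter~5 of the reference \citet{Robert}, so there is no in-paper argument to match yours against. Your route is the classical regenerative one: decompose $T_K$ over i.i.d.\ cycles between visits to $0$, compute the per-cycle success probability $p_K\sim(1{-}\rho)\rho^{K-1}$ by gambler's ruin for the embedded walk, and invoke R\'enyi's theorem on geometric sums (your ``Anscombe plus law of large numbers'' step) to get $p_KT_K\to\E(\xi_1)\,{\cal E}$; this is robust, essentially independent of the birth-and-death structure, and it identifies the limiting parameter as $(1{-}\rho)/(\rho\,\E(\xi_1))$ with $\E(\xi_1){=}1/\gamma{+}1/(\mu{-}\gamma)$, which is more than the statement asks for. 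The proof in the cited reference instead exploits the explicit birth-and-death structure (the passage times between consecutive levels and their Laplace transforms), which is more computational but avoids the uniform-integrability bookkeeping. Two cosmetic points: your $\widetilde{\xi}_K$ should be the initial segment of the successful excursion up to the first hit of $K$ rather than the whole excursion (this only helps your bound), and for the R\'enyi step first moments suffice, so you do not actually need control of the conditioned variances.
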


\subsection{The $M/M/\infty$ queue}\label{MMI}
This is another classical birth and death process  on $\N$. An $M/M/\infty$ queue with input rate $\gamma$ and service rate $\mu$  on $\N$ is a Markov process with $Q$-matrix given by
  \[
z\mapsto \begin{cases}
z{+}1,&   \gamma,\\
z{-}1,&  \mu z.
\end{cases}
\]
Its invariant distribution is a Poisson distribution with parameter $\gamma/\mu$. 

It can be seen in fact as a discrete Ornstein-Uhlenbeck process. See Chapter~6 of~\citet{Robert} for example.
For $K{\in}\N$, the hitting time of level $K$ by  the  process $(L(t))$  of an $M/M/\infty$  input rate $\gamma$ and service rate $\mu$  is defined by 
\[
T_K=\inf\left\{t: L_\infty(t){\ge} K\right\}.
\]
The following result, see Proposition~6.10 of~\cite{Robert} for example, is used to establish several tightness results. 
\begin{proposition}\label{HitMMI}
If $L(0){=}z_0$, as $K$ goes to infinity, the random variable 
\[
\left(\frac{\gamma}{\mu}\right)^{K}\frac{T_K}{(K{-}1)!}
\]
converges in distribution to an exponential random variable. 
\end{proposition}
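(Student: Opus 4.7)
The strategy is the classical one for first-passage times to rare states of a positive recurrent birth--death process: reduce $T_K$ to a geometric sum of i.i.d.\ excursions with a vanishing success probability, and conclude using the well-known fact that a geometric random variable rescaled by its vanishing parameter converges to an exponential.

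First, I exploit the regenerative structure at state $0$. Writing $\tau_0^+$ for the first return time to $0$ after leaving it, the strong Markov property at successive passages through $0$ gives, up to a transient initial piece absorbed by the scaling, the decomposition
\[
T_K \stackrel{d}{=} \sum_{i=1}^{N_K} \xi_i + \xi',
\]
in which $N_K$ is geometric with parameter $p_K := \mathbb{P}_0(T_K < \tau_0^+)$, the $\xi_i$ are i.i.d.\ copies of $\tau_0^+$ conditioned on $\{T_K > \tau_0^+\}$, and $\xi'$ is the final, successful, excursion. I then compute $p_K$ by the standard birth--death gambler's-ruin formula: from $0$ the chain jumps only to $1$, so using birth rates $\gamma_n \equiv \gamma$ and death rates $\mu_n = n\mu$,
\[
p_K \;=\; \left(\sum_{j=0}^{K-1} \prod_{i=1}^{j}\frac{\mu_i}{\gamma_i}\right)^{-1} \;=\; \left(\sum_{j=0}^{K-1} j!\,(\mu/\gamma)^j\right)^{-1} \;\sim\; \frac{(\gamma/\mu)^{K-1}}{(K-1)!}.
\]

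Next, I combine these ingredients. It is classical that if $N_p$ is geometric with parameter $p \to 0$, then $p\, N_p$ converges in distribution to $\mathcal{E}(1)$. The unconditional mean recurrence time is finite and given by the cycle formula
\[
\mathbb{E}_0(\tau_0^+) \;=\; \frac{1}{\gamma\, \pi(0)} \;=\; \frac{e^{\gamma/\mu}}{\gamma},
\]
using that $\pi$ is Poisson$(\gamma/\mu)$. Since $\mathbb{P}_0(T_K > \tau_0^+) = 1 - p_K \to 1$, the conditional law of $\xi_1$ converges to that of $\tau_0^+$, and an Anscombe-type law of large numbers yields $N_K^{-1}\sum_{i=1}^{N_K} \xi_i \to \mathbb{E}_0(\tau_0^+)$ in probability. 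By Slutsky's theorem, $p_K T_K$ converges in distribution to an exponential variable with mean $\mathbb{E}_0(\tau_0^+)$. The asymptotic $p_K \sim (\gamma/\mu)^{K-1}/(K-1)!$ then translates this into the claimed convergence of $(\gamma/\mu)^K T_K/(K-1)!$, with limiting mean $(\mu/\gamma)\cdot e^{\gamma/\mu}/\gamma$.

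The main obstacle is the joint convergence of a random number of summands whose distribution depends on $K$ through the conditioning event $\{T_K > \tau_0^+\}$. This is handled by a uniform integrability argument: the conditional $\xi_1$ is stochastically dominated by the unconditional $\tau_0^+$ outside an event of probability $p_K \to 0$, and the second moment $\mathbb{E}_0((\tau_0^+)^2)$ is finite by the exponential ergodicity of the $M/M/\infty$ queue. The final excursion $\xi'$ satisfies $\mathbb{E}(\xi') = O(K)$ (at least $K-1$ births at rate $\gamma$ must occur before reaching $K$), which is negligible after multiplication by the super-exponentially small $p_K$. The contribution from the arbitrary deterministic starting point $z_0$ is absorbed in the same way, by coupling the path from $z_0$ to its first passage through $0$ with a quantity of finite expectation independent of $K$.
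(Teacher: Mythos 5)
The paper does not prove this proposition at all: it is quoted verbatim from the cited reference (Proposition~6.10 of Robert's book), whose proof is precisely the regenerative argument you give, so your route is the standard one rather than a new one. Your proof is essentially correct: the geometric-sum-of-excursions decomposition, the gambler's-ruin computation $p_K\sim(\gamma/\mu)^{K-1}/(K-1)!$, the cycle formula $\E_0(\tau_0^+)=e^{\gamma/\mu}/\gamma$, and the Anscombe/Slutsky conclusion are all sound. Two small points deserve attention. First, the final constant is off: since $(\gamma/\mu)^K/(K-1)!\sim(\gamma/\mu)\,p_K$, the limit is exponential with mean $(\gamma/\mu)\cdot e^{\gamma/\mu}/\gamma=e^{\gamma/\mu}/\mu$, not $(\mu/\gamma)\cdot e^{\gamma/\mu}/\gamma$; this is immaterial for the statement as written (the parameter is unspecified) but should be fixed if you record the rate. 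Second, your justification that $\E(\xi')=O(K)$ (``at least $K{-}1$ births must occur'') is a lower-bound argument and does not bound the successful excursion from above; the cleanest fix is to note that $\E_0(T_K)=\E(N_K)\E(\xi_1)+\E(\xi')$ together with the explicit asymptotics $\E_0(T_K)\sim\E_0(\tau_0^+)/p_K$ forces $p_K\,\E(\xi')\to0$, which is all you need.
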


\end{document}